\DeclareMathAlphabet{\mathpzc}{OT1}{pzc}{m}{it}
\newcommand\xqed[1]{%
  \leavevmode\unskip\penalty9999 \hbox{}\nobreak\hfill
  \quad\hbox{#1}}
\newcommand\demo{\xqed{$\diamond$}}
\def\cD{\mathscr{D}}
\def\cJ{\mathscr{J}}
\def\cM{\mathscr{M}}
\def\cT{\mathscr{T}}
\def\sig{\Sigma^2}
\def\BN{\mathbb{N}}
\def\BW{\mathbb{W}}
\def\BZ{\mathbb{Z}}
\def\fE{\mathfrak{E}}
\def\fT{\mathfrak{T}}
\def\fY{\mathfrak{Y}}
\def\fa{\mathfrak{a}}
\def\fb{\mathfrak{b}}
\def\sD{\mathsf{D}}
\def\sE{\mathsf{E}}
\def\add{\operatorname{add}}
\def\adots{\mathinner{\mkern1mu\raise1.0pt\vbox{\kern7.0pt\hbox{.}}\mkern2mu\raise4.0pt\hbox{.}\mkern2mu\raise7.0pt\hbox{.}\mkern1mu}}
\def\ast{{\textstyle *}}
\def\colim{\operatorname{colim}}
\def\Gr{\operatorname{Gr}}
\def\dim{\operatorname{dim}}
\def\dddots{\mathinner{\mkern1mu\raise10.0pt\vbox{\kern7.0pt\hbox{.}}\mkern2mu\raise5.3pt\hbox{.}\mkern2mu\raise1.0pt\hbox{.}\mkern1mu}}
\def\dddotssmall{\mathinner{\mkern1mu\raise7.0pt\vbox{\kern7.0pt\hbox{.}}\mkern-1mu\raise4pt\hbox{.}\mkern-1mu\raise1.0pt\hbox{.}\mkern1mu}}
\def\dim{\operatorname{dim}}
\def\Gr{\mathsf{Gr}}
\def\Hom{\operatorname{Hom}}
\def\Image{\operatorname{Im}}
\def\ind{\operatorname{ind}}
\def\SL2{\operatorname{SL}_2}
\newcommand{\Dbar}{\overline{\mathscr{D}}}
\newcommand{\hmD}{\operatorname{Hom}_{\overline{\mathscr{D}}}}
\newcommand{\hocolim}{\operatorname{hocolim}}
\newtheorem{Lemma}{Lemma}[section]
\newtheorem{Theorem}[Lemma]{Theorem}
\newtheorem{Proposition}[Lemma]{Proposition}
\newtheorem{Corollary}[Lemma]{Corollary}
\theoremstyle{definition}
\newtheorem{Definition}[Lemma]{Definition}
\newtheorem{Remark}[Lemma]{Remark}
\newtheorem{Notation}[Lemma]{Notation}
\DeclareMathSymbol{\Gamma}{\mathalpha}{letters}{"00}
\DeclareMathSymbol{\Delta}{\mathalpha}{letters}{"01}
\DeclareMathSymbol{\Theta}{\mathalpha}{letters}{"02}
\DeclareMathSymbol{\Lambda}{\mathalpha}{letters}{"03}
\DeclareMathSymbol{\Xi}{\mathalpha}{letters}{"04}
\DeclareMathSymbol{\Pi}{\mathalpha}{letters}{"05}
\DeclareMathSymbol{\Sigma}{\mathalpha}{letters}{"06}
\DeclareMathSymbol{\Upsilon}{\mathalpha}{letters}{"07}
\DeclareMathSymbol{\Phi}{\mathalpha}{letters}{"08}
\DeclareMathSymbol{\Psi}{\mathalpha}{letters}{"09}
\DeclareMathSymbol{\Omega}{\mathalpha}{letters}{"0A}
\theoremstyle{definition}
\theoremstyle{definition}
\theoremstyle{definition}
\newsavebox{\astrutbox}
\sbox{\astrutbox}{\rule[-5pt]{0pt}{20pt}}
\begin{document}

\setlength{\parindent}{0pt}
\setlength{\parskip}{6pt}
%The default \baselineskip is close to 4.8mm
%\setlength{\baselineskip}{5.3mm}

\title[On the enlargement of the cluster category of type $A_\infty$]
{On the enlargement by Pr\"{u}fer objects of the cluster category of type $A_\infty$}

\author{Thomas A. Fisher}
\address{School of Mathematics and Statistics,
Newcastle University, Newcastle upon Tyne, NE1 7RU,
United Kingdom}
\email{t.a.fisher@ncl.ac.uk}

%\author{Next author goes here}
%\address{Next author's postal address goes here}
%\email{Next author's mail address goes here}

%\keywords{$2$-Calabi-Yau category, cluster structure, cluster tilting
%  subcategory, derived category, Differential Graded algebra,
%  Differential Graded module, Fomin-Zelevinsky mutation, spherical
%  object, quiver, weak cluster tilting subcategory}

%\subjclass[2000]{16E45, 16G20, 16G70}

%Our LaTeX packages do not permit the 2010 math subject label.  Hence
%the following construction.  We have also put the keywords in a \thanks
%to get the right order (subject class first, then keyworks).

\thanks{2014 {\em Mathematics Subject Classification. }13F60, 16E45, 16G70, 18E30}

\thanks{{\em Key words and phrases. }Cluster category, cluster tilting category, homotopy colimit, Pr\"{u}fer object, triangulated category, triangulation of infinite polygon, weakly cluster tilting subcategory}

%13F60: Cluster algebras
%16E10: Homological dimension
%16E45: Differential graded algebras and applications
%16G10: Representations of Artinian rings 
%16G20: Representations of quivers and partially ordered sets
%16G60: Representation type (finite, tame, wild, etc.) 
%16G70: Auslander-Reiten sequences (almost split sequences) and
%       Auslander-Reiten quivers
%18E30: Derived categories, triangulated categories
%18E35: Localization of categories 
%18G99: Homological algebra: None of the above, but in this section 
%55P62: Rational homotopy theory

\maketitle

\begin{abstract}
In \citep{J}, the cluster category $\cD$ of type $A_\infty$, with Auslander-Reiten quiver $\BZ A_\infty$, is introduced. Slices in the Auslander-Reiten quiver of $\cD$ give rise to direct systems; the homotopy colimit of such direct systems can be computed and these ``Pr\"{u}fer objects'' can be adjoined to form a larger category. It is this larger category, $\Dbar,$ which is the main object of study in this paper. We show that $\Dbar$ inherits a nice geometrical structure from $\cD$; ``arcs'' between non-neighbouring integers on the number line correspond to indecomposable objects, and in the case of $\Dbar$ we also have arcs to infinity which correspond to the Pr\"{u}fer objects. During the course of this paper, we show that $\Dbar$ is triangulated, compute homs, investigate the geometric model, and we conclude by computing the cluster tilting subcategories of $\Dbar$.
\end{abstract}

\setcounter{section}{-1}
\section{Introduction}
\label{sec:introduction}

In this paper, and in \citep{J}, $k$ is an algebraically closed field and $R = k[T]$ is viewed as a DG algebra with zero differential, with $T$ placed in cohomological degree $-1.$ In \citep{J}, the category $\cD = \sD^f(R),$ the derived category of (right) DG $R$-modules with finite-dimensional cohomology over $k,$ is introduced. 

It is a $k$-linear, Hom-finite, Krull-Schmidt, 2-Calabi-Yau triangulated category of algebraic origin. The indecomposables of $\cD$ are the objects $\Sigma^i X_n$ for $i \in \BZ,$ $n \in \BN_0$ where $X_n$ is obtained via a distinguished triangle
$$\xymatrix{
\Sigma^{n+1}R \ar[rr]^-{\cdot T^{n+1}} && R \ar[rr] && X_n,
}$$
see, for example, \citep[Section 1]{J}. The Auslander-Reiten quiver of $\cD$ is $\BZ A_\infty$. 

It was shown in \citep{J} that one can think of $\cD$ as a cluster category of type $A_\infty$. In particular, $\cD$ has a geometric model in terms of ``arcs'' between non-neighbouring integers on the number line, where a pair of integers $(a,b)$ (with $a \leq b-2$) corresponds to an indecomposable object, and the crossing of arcs corresponds to the non-vanishing of the $\operatorname{Ext}$-space between the corresponding indecomposables. 

It is in fact the sets of arcs which do not cross, which give rise to nicer constructions still: a maximal, non-crossing set of arcs is viewed as a triangulation of the $\infty$-gon, and the objects in the additive hull of the corresponding indecomposables will form a weakly cluster tilting subcategory. If the arcs are configured in a certain way it will turn out that the subcategory formed will be cluster tilting, i.e. it is also functorially finite. 

Slices in $\BZ A_\infty$ give rises to homotopy colimits - so-called ``Pr\"{u}fer objects'', see, for example, \citep[Introduction]{BABHK}. The purpose of this paper is to study the category $\Dbar$, where these Pr\"{u}fer objects have been adjoined to $\cD$. Accordingly, we show that $\Dbar$ is a $k$-linear, Hom-finite, Krull-Schmidt, triangulated category and we compute homs, extend the geometric model with arcs to infinity (where the crossing of arcs still corresponds to the non-vanishing of $\operatorname{Ext}$-spaces), and the paper concludes by finding the cluster tilting subcategories of $\Dbar$.

Note that apart from \citep{BABHK}, there are some other papers in the literature which study various incarnations of Pr\"{u}fer objects, see for instance \citep[Sec. 1]{AS} and \citep[Sec. 3.3]{BBM}.

\vspace{-16.5mm}

\section{Introducing the category $\overline{\cD}$}
\label{sec:introducingDbar}

\setlength{\parindent}{0pt}
\setlength{\parskip}{4pt}

In this section we introduce the category $\Dbar.$ We aim to define $\Dbar$ and conclude with proving it is triangulated.

\begin{Definition} \label{def:arquiver}
The Auslander-Reiten quiver of $\cD$ has a vertex for each indecomposable object $\Sigma^i X_n$ and an arrow between vertices if there is an irreducible morphism between the corresponding objects. The Auslander-Reiten quiver of $\cD$ is illustrated below, see \citep[Remark 1.4]{J}.
\[ \def\objectstyle{\scriptstyle}
  \xymatrix @-2.2pc @! {
    & \vdots \ar[dr] & & \vdots \ar[dr] & & \vdots \ar[dr] & & \vdots \ar[dr] & & \vdots \ar[dr] & & \vdots & \\
    \cdots \ar[dr]& & \Sigma^0 X_3 \ar[ur] \ar[dr] & & \Sigma^{-1} X_3 \ar[ur] \ar[dr] & & \Sigma^{-2} X_3 \ar[ur] \ar[dr] & & \Sigma^{-3}X_3 \ar[ur] \ar[dr] & & \Sigma^{-4}X_3 \ar[ur] \ar[dr] & & \cdots \\
    & \Sigma^1 X_2 \ar[ur] \ar[dr] & & \Sigma^0 X_2 \ar[ur] \ar[dr] & & \Sigma^{-1} X_2 \ar[ur] \ar[dr] & & \Sigma^{-2}X_2 \ar[ur] \ar[dr] & & \Sigma^{-3}X_2 \ar[ur] \ar[dr] & & \Sigma^{-4}X_2 \ar[ur] \ar[dr] & \\
    \cdots \ar[ur]\ar[dr]& & \Sigma^1 X_1 \ar[ur] \ar[dr] & & \Sigma^0 X_1 \ar[ur] \ar[dr] & & \Sigma^{-1} X_1 \ar[ur] \ar[dr] & & \Sigma^{-2}X_1 \ar[ur] \ar[dr] & & \Sigma^{-3}X_1 \ar[ur] \ar[dr] & & \cdots\\
    & \Sigma^2 X_0 \ar[ur] & & \Sigma^1 X_0 \ar[ur] & & \Sigma^0 X_0 \ar[ur] & & \Sigma^{-1}X_0 \ar[ur] & & \Sigma^{-2}X_0 \ar[ur] & & \Sigma^{-3}X_0 \ar[ur] & \\
               }
\] \demo
\end{Definition}

\begin{Definition} \label{def:slicedef}
A \textit{slice} in the Auslander-Reiten quiver of $\cD$ is a collection of vertices and arrows associated with the direct system of irreducible morphisms $$ \Sigma^nX_0 \rightarrow \Sigma^{n-1}X_1 \rightarrow \cdots \rightarrow \Sigma^{n-i}X_i \rightarrow \cdots.$$ Note that by \citep[Proposition 2.2]{J} the hom-space between two neighbouring objects in the system is 1-dimensional, so the system is determined up to isomorphism. The integer $n$ tells us where along the baseline the slice starts. For example, for $n=0$, the corresponding slice is illustrated below.
\[ \def\objectstyle{\scriptstyle}
  \xymatrix @-2.2pc @! {
    & \vdots \ar@{.>}[dr] & & \vdots \ar@{.>}[dr] & & \vdots \ar@{.>}[dr] & & \vdots \ar@{.>}[dr] & & \vdots \ar@{.>}[dr] & & \vdots & \\
    \cdots \ar@{.>}[dr]& & \Sigma^0 X_3 \ar@{.>}[ur] \ar@{.>}[dr] & & \Sigma^{-1} X_3 \ar@{.>}[ur] \ar@{.>}[dr] & & \Sigma^{-2} X_3 \ar@{.>}[ur] \ar@{.>}[dr] & & \Sigma^{-3}X_3 \ar[ur] \ar@{.>}[dr] & & \Sigma^{-4}X_3 \ar@{.>}[ur] \ar@{.>}[dr] & & \cdots \\
    & \Sigma^1 X_2 \ar@{.>}[ur] \ar@{.>}[dr] & & \Sigma^0 X_2 \ar@{.>}[ur] \ar@{.>}[dr] & & \Sigma^{-1} X_2 \ar@{.>}[ur] \ar@{.>}[dr] & & \Sigma^{-2}X_2 \ar[ur] \ar@{.>}[dr] & & \Sigma^{-3}X_2 \ar@{.>}[ur] \ar@{.>}[dr] & & \Sigma^{-4}X_2 \ar@{.>}[ur] \ar@{.>}[dr] & \\
    \cdots \ar@{.>}[ur]\ar@{.>}[dr]& & \Sigma^1 X_1 \ar@{.>}[ur] \ar@{.>}[dr] & & \Sigma^0 X_1 \ar@{.>}[ur] \ar@{.>}[dr] & & \Sigma^{-1} X_1 \ar[ur] \ar@{.>}[dr] & & \Sigma^{-2}X_1 \ar@{.>}[ur] \ar@{.>}[dr] & & \Sigma^{-3}X_1 \ar@{.>}[ur] \ar@{.>}[dr] & & \cdots\\
    & \Sigma^2 X_0 \ar@{.>}[ur] & & \Sigma^1 X_0 \ar@{.>}[ur] & & \Sigma^0 X_0 \ar[ur] & & \Sigma^{-1}X_0 \ar@{.>}[ur] & & \Sigma^{-2}X_0 \ar@{.>}[ur] & & \Sigma^{-3}X_0 \ar@{.>}[ur] & \\
               }
\]
The object $\Sigma^0X_0$ is the start of the slice, and is located on the baseline. The associated direct system is
$$\Sigma^0X_0 \rightarrow \Sigma^{-1}X_1 \rightarrow \cdots \rightarrow \Sigma^{-i}X_i \rightarrow \cdots.$$ \demo
\end{Definition}

\noindent The homotopy colimit, or hocolimit for short, of such direct systems is defined on page 209 of \citep{BN}.

\begin{Notation}\label{not:choice}
We will sometimes write $\hocolim_i(\Sigma^{n-i}X_i)$ as $\sE_n.$ Note that $\sE_n$ is in $\sD(R)$ but not in $\cD.$ \demo
\end{Notation}

\begin{Remark} \label{rem:onlyonehoc}
The only hocolimits which can be built from the indecomposables of $\cD$ are the hocolimits of direct systems of the form 
$$\xymatrix{
\Sigma^nX_0 \ar[r]^-{\xi_0} & \Sigma^{n-1}X_1 \ar[r]^-{\xi_1} & \Sigma^{n-2}X_2 \ar[r]& \cdots.
}
$$
This is because the only other way to obtain a direct system in the quiver would be to move up (and down) in the quiver infinitely often, like in a zig-zag. This is illustrated below with an example.
\[ \def\objectstyle{\scriptstyle}
  \xymatrix @-2.2pc @! {
    & \vdots \ar@{.>}[dr] & & \vdots \ar@{.>}[dr] & & \vdots \ar@{.>}[dr] & & \vdots \ar@{.>}[dr] & & \vdots \ar@{.>}[dr] & & \vdots & \\
    \cdots \ar@{.>}[dr]& & \Sigma^0 X_3 \ar@{.>}[ur] \ar@{.>}[dr] & & \Sigma^{-1} X_3 \ar@{.>}[ur] \ar@{.>}[dr] & & \Sigma^{-2} X_3 \ar@{.>}[ur] \ar@{.>}[dr] & & \Sigma^{-3}X_3 \ar@{.>}[ur] \ar[dr] & & \Sigma^{-4}X_3 \ar[ur] \ar@{.>}[dr] & & \cdots \\
    & \Sigma^1 X_2 \ar@{.>}[ur] \ar@{.>}[dr] & & \Sigma^0 X_2 \ar@{.>}[ur] \ar[dr] & & \Sigma^{-1} X_2 \ar@{.>}[ur] \ar[dr] & & \Sigma^{-2}X_2 \ar[ur] \ar@{.>}[dr] & & \Sigma^{-3}X_2 \ar[ur] \ar@{.>}[dr] & & \Sigma^{-4}X_2 \ar@{.>}[ur] \ar@{.>}[dr] & \\
    \cdots \ar@{.>}[ur]\ar@{.>}[dr]& & \Sigma^1 X_1 \ar[ur] \ar@{.>}[dr] & & \Sigma^0 X_1 \ar[ur] \ar@{.>}[dr] & & \Sigma^{-1} X_1 \ar[ur] \ar@{.>}[dr] & & \Sigma^{-2}X_1 \ar@{.>}[ur] \ar@{.>}[dr] & & \Sigma^{-3}X_1 \ar@{.>}[ur] \ar@{.>}[dr] & & \cdots\\
    & \Sigma^2 X_0 \ar[ur] & & \Sigma^1 X_0 \ar@{.>}[ur] & & \Sigma^0 X_0 \ar@{.>}[ur] & & \Sigma^{-1}X_0 \ar@{.>}[ur] & & \Sigma^{-2}X_0 \ar@{.>}[ur] & & \Sigma^{-3}X_0 \ar@{.>}[ur] & \\
               }
\]
Notice that if we stop moving down in the quiver, we necessarily end up on one of the direct systems of the form
$$\xymatrix{
\Sigma^nX_0 \ar[r]^-{\xi_0} & \Sigma^{n-1}X_1 \ar[r]^-{\xi_1} & \Sigma^{n-2}X_2 \ar[r]& \cdots
}
$$
and the zig-zagging at the beginning has no influence on its hocolimit. If we have a direct system where zig-zagging occurs infinitely often, then the results of this paper will show that the objects in the direct system won't have any nonzero maps to the hocolimit. This means that the hocolimit of such a direct system is zero. \demo
\end{Remark}

\begin{Definition}
Let $\Dbar$ be the category ``built'' from $\cD$ and all hocolimits of direct systems of the form
$$\xymatrix{
\Sigma^nX_0 \ar[r]^-{\xi_0} & \Sigma^{n-1}X_1 \ar[r]^-{\xi_1} & \Sigma^{n-2}X_2 \ar[r]& \cdots.
}
$$
Formally, we have 
$$\Dbar = \add\{\Sigma^i X_n, \sE_m\}$$ 
for $n \in \BN_0$ and $i, m \in \BZ,$ where $\add$ is taken inside $\sD(R)$. \demo
\end{Definition}

\begin{Remark}
Let $\sE_n$ be a hocolimit of the form in Notation \ref{not:choice}. Then $\Sigma^t \sE_n = \sE_{n+t},$ because it is obtained by applying the functor $\Sigma^t$ to the direct system (\ref{equ:slice}),
$$
\xymatrix{
\Sigma^nX_0 \ar[r]^-{\xi_0} & \Sigma^{n-1}X_1 \ar[r]^-{\xi_1} & \Sigma^{n-2}X_2 \ar[r]& \cdots,
}
$$
resulting in
$$
\xymatrix{
\Sigma^{n+t}X_0 \ar[r]^-{\Sigma^t \xi_0} & \Sigma^{n+t-1}X_1 \ar[r]^-{\Sigma^t \xi_1} & \Sigma^{n+t-2}X_2 \ar[r]& \cdots,
}
$$
which has hocolimit equal to $\sE_{n+t}$ in the category $\Dbar.$ \demo
\end{Remark}

\noindent Before computing morphisms in the category $\Dbar,$ we first prove that $\Dbar$ is triangulated.

\begin{Remark} \label{rem:different*}
There are two uses in this paper of the notation $(-)^{\ast}$. One use is to denote the dual of a vector space. Another use is to represent a grading. We aim to make it very clear which one is being used by the context. \demo
\end{Remark}

\begin{Remark} \label{rem:different(,)}
There are two uses in this paper of the notation $(-,-)$. One use is to denote the hom-space between two objects, like in $\Hom(X,Y)$. Another use is to represent an arc between two non-neighbouring integers on the number line, which we will see later on. It is always clear from the context which one is being used. \demo
\end{Remark}

\begin{Remark} \label{rem:thereisafunctor}
There is a functor 
$$F(-) = (R,\Sigma^{\ast}(-)) : \sD(R) \rightarrow \operatorname{Gr}((R,\Sigma^{\ast}R))$$ 
$$X \mapsto (R,\Sigma^{\ast}X)$$
from $\sD(R)$ to the category of graded right-modules over the graded $k$-algebra $(R,\Sigma^{\ast}R).$ We know $\Hom_{\sD(R)}(R,\text{---}) \cong H^0(\text{---})$ and this gives the first of the following isomorphisms: 
$$(R,\Sigma^{\ast}R) \cong H^{\ast}(R) \cong k[T].$$ The second holds since $R$ is just $k[T]$ equipped with the zero differential. Hence we have $\operatorname{Gr}((R,\Sigma^{\ast}R)) \cong \operatorname{Gr}(k[T]).$ Scalar multiplication in $(R,\Sigma^{\ast}R)$ works in the following way: if $m \in (R,\Sigma^iX), \hspace{1.5mm} a \in (R,\Sigma^jR),$ then $ma = \Sigma^j(m)\circ a.$ \demo
\end{Remark}

\noindent It will be convenient to abuse notation slightly and sometimes think of the functor $F$ being a functor $$F(-) : \sD(R) \rightarrow \Gr(k[T]),$$ due to the equivalence of categories $\operatorname{Gr}((R,\Sigma^{\ast}R)) \cong \operatorname{Gr}(k[T]).$

\begin{Lemma} \label{lem:Fishom}
The functor $F$ is homological; that is, if $X \rightarrow Y \rightarrow Z \rightarrow \Sigma X$ is a distinguished triangle in $\sD(R),$ then $F(X) \rightarrow F(Y) \rightarrow F(Z) \rightarrow F(\Sigma X)$ is an exact sequence.
\end{Lemma}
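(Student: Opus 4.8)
The plan is to reduce the statement to the standard fact that, in any triangulated category, applying $\Hom(W,-)$ to a distinguished triangle and its rotations produces a long exact sequence — used here with $W=R$ — together with the elementary observation that exactness of a sequence of graded $k[T]$-modules may be tested in each graded degree separately.

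First I would recall from Remark~\ref{rem:thereisafunctor} that the degree-$j$ component of $F(X)$ is $(R,\Sigma^{j}X) = \Hom_{\sD(R)}(R,\Sigma^{j}X) \cong H^{0}(\Sigma^{j}X)$. In particular $F(\Sigma X)$ is nothing but $F(X)$ with its grading shifted by one, since $(R,\Sigma^{j}(\Sigma X)) = (R,\Sigma^{j+1}X)$; so the sequence $F(X)\to F(Y)\to F(Z)\to F(\Sigma X)$ is obtained by applying $\Hom_{\sD(R)}(R,\Sigma^{j}(-))$ to the given triangle for each $j\in\BZ$ and re-assembling the outputs into graded modules.

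Next, given the distinguished triangle $X\to Y\to Z\to \Sigma X$ in $\sD(R)$, I would rotate it and apply $\Hom_{\sD(R)}(R,-)$ to obtain, for every $j\in\BZ$, the long exact sequence
$$
\cdots \to (R,\Sigma^{j-1}Z)\to (R,\Sigma^{j}X)\to (R,\Sigma^{j}Y)\to (R,\Sigma^{j}Z)\to (R,\Sigma^{j+1}X)\to\cdots .
$$
In terms of $F$ this says precisely that, in each graded degree, the sequence $F(X)\to F(Y)\to F(Z)$ is exact at $F(Y)$ and $F(Y)\to F(Z)\to F(\Sigma X)$ is exact at $F(Z)$. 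Since a sequence of graded $k[T]$-modules is exact if and only if it is exact in every degree, $F(X)\to F(Y)\to F(Z)\to F(\Sigma X)$ is an exact sequence, as required.

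Finally I would observe that the maps occurring here are genuinely morphisms of graded $k[T]$-modules, not merely of graded $k$-vector spaces: this is immediate from functoriality of $\Hom_{\sD(R)}(R,-)$ together with the description of the $k[T]$-action $ma = \Sigma^{j}(m)\circ a$ recorded in Remark~\ref{rem:thereisafunctor}, so nothing extra needs to be verified beyond what is already built into the definition of $F$. I do not anticipate a genuine obstacle; the only point requiring a little care is the bookkeeping of the suspension and grading conventions, i.e.\ keeping track of the graded degree into which each connecting morphism of the cohomology long exact sequence lands.
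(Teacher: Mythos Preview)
Your proposal is correct and follows essentially the same approach as the paper: both check exactness degree by degree by invoking that $\Hom_{\sD(R)}(R,-)$ is homological on the rotated triangles $\Sigma^{i}X\to\Sigma^{i}Y\to\Sigma^{i}Z\to\Sigma^{i+1}X$. Your version is slightly more explicit about the $k[T]$-module compatibility and the grading bookkeeping, but the underlying argument is identical.
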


\begin{proof}
Apply $F$ to the distinguished triangle $\xymatrix@1{ X \ar[r]^-{f} & Y \ar[r]^-{g} & Z \ar[r]^-{h} & \Sigma X}$ to obtain
\begin{equation} \label{equ:fes}
  \xymatrix {
(R,\Sigma^{\ast}X) \ar[rr]^-{(R,\Sigma^{\ast}f)} && (R,\Sigma^{\ast}Y) \ar[rr]^-{(R,\Sigma^{\ast}g)} && (R,\Sigma^{\ast}Z) \ar[rr]^-{(R,\Sigma^{\ast}h)} && (R,\Sigma^{\ast+1} X).
}
\end{equation}
In degree $i,$ this is 
\[ 
  \xymatrix {
(R,\Sigma^{i}X) \ar[rr]^-{(R,\Sigma^{i}f)} && (R,\Sigma^{i}Y) \ar[rr]^-{(R,\Sigma^{i}g)} && (R,\Sigma^{i}Z) \ar[rr]^-{(R,\Sigma^{i}h)} && (R,\Sigma^{i+1} X).
}
\]
This sequence is exact, because $\Sigma^i X \rightarrow \Sigma^i Y \rightarrow \Sigma^i Z \rightarrow \Sigma^{i+1} X$ is again a distinguished triangle and $\Hom_{\sD(R)}(R,\text{---})$ is homological. Therefore, the sequence (\ref{equ:fes}) is an exact sequence, because it is exact at every degree.
\end{proof}

\noindent If $M$ is a graded module then $\Sigma M$ denotes the shift, so $(\Sigma M)^i = M^{i+1}.$

\begin{Lemma} \label{lem:ex34a}
If $\cM \subseteq \operatorname{Gr}(k[T])$ is a full subcategory closed under extensions, kernels, and cokernels, and $\Sigma \cM = \cM,$ then $\cJ = F^{-1}\cM$ is a triangulated subcategory of $\sD(R).$
\end{Lemma}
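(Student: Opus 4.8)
The plan is to check the three defining features of a triangulated subcategory for $\cJ=F^{-1}\cM$: that it is a (strictly) full subcategory of $\sD(R)$, that it is closed under $\Sigma$ and $\Sigma^{-1}$, and that it satisfies the two-out-of-three property for distinguished triangles. Fullness is immediate from the definition, and closure under isomorphism follows because $F$ sends isomorphisms to isomorphisms and $\cM$ is strictly full; note also that $0\in\cM$ (it is the cokernel of an identity morphism between objects of $\cM$), so $0\in\cJ$.

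Closure under shifts rests on the observation that $F$ carries the suspension of $\sD(R)$ to the grading shift of $\operatorname{Gr}(k[T])$. Indeed, in degree $i$ we have $F(\Sigma X)^i=(R,\Sigma^i\Sigma X)=(R,\Sigma^{i+1}X)=F(X)^{i+1}=(\Sigma F(X))^i$, so $F(\Sigma X)=\Sigma F(X)$, naturally in $X$. Since $\Sigma\cM=\cM$, this gives $X\in\cJ\iff\Sigma X\in\cJ\iff\Sigma^{-1}X\in\cJ$.

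For the two-out-of-three property, take a distinguished triangle $X\to Y\to Z\to\Sigma X$ in $\sD(R)$ and apply Lemma~\ref{lem:Fishom} to it and to its rotations; this produces a long exact sequence in $\operatorname{Gr}(k[T])$
$$\cdots\to F(X)\xrightarrow{a}F(Y)\xrightarrow{b}F(Z)\xrightarrow{c}F(\Sigma X)\xrightarrow{d}F(\Sigma Y)\to\cdots.$$
Assume $X,Y\in\cJ$, so that $F(X),F(Y)\in\cM$ and hence also $F(\Sigma X)=\Sigma F(X)$ and $F(\Sigma Y)=\Sigma F(Y)$ lie in $\cM$. Exactness at $F(Y)$ identifies $\Image(b)$ with $\Coker(a)$, which lies in $\cM$ since $\cM$ is closed under cokernels; exactness at $F(Z)$ and $F(\Sigma X)$ identifies $F(Z)/\Image(b)$ with $\Ker(d)$, which lies in $\cM$ since $\cM$ is closed under kernels. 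Thus $F(Z)$ is an extension
$$0\to\Coker(a)\to F(Z)\to\Ker(d)\to0$$
of objects of $\cM$, so $F(Z)\in\cM$ because $\cM$ is closed under extensions; that is, $Z\in\cJ$. The two remaining cases ($Y,Z\in\cJ$ and $X,Z\in\cJ$) reduce to this one by applying it to the rotated distinguished triangles $\Sigma^{-1}Y\to\Sigma^{-1}Z\to X\to Y$ and $\Sigma^{-1}Z\to X\to Y\to Z$ respectively, together with the already-established closure of $\cJ$ under $\Sigma^{-1}$.

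The only step requiring any real care is the extraction of the short exact sequence $0\to\Coker(a)\to F(Z)\to\Ker(d)\to0$ from the long exact sequence and the verification that its two outer terms genuinely belong to $\cM$ --- this is precisely where the hypotheses $\Sigma\cM=\cM$ and closure of $\cM$ under kernels, cokernels and extensions are all used simultaneously. Everything else is formal.
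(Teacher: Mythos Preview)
Your proof is correct and follows essentially the same approach as the paper: extract from the long exact sequence the short exact sequence $0\to\Coker(a)\to FZ\to\Ker(d)\to 0$, then use closure of $\cM$ under cokernels, kernels, and extensions (together with $\Sigma\cM=\cM$) to conclude $FZ\in\cM$. You are slightly more thorough in explicitly verifying fullness, $0\in\cJ$, and all three cases of two-out-of-three via rotation, whereas the paper states only the cone case; but the core argument is the same.
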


\begin{proof}
Let $X \rightarrow Y$ be a morphism in $\cJ.$ Then there exists an object $Z$ in $\sD(R)$ such that $X \rightarrow Y \rightarrow Z \rightarrow \Sigma X$ is a distinguished triangle. Extend this to 
$$X \rightarrow Y \rightarrow Z \rightarrow \Sigma X \rightarrow \Sigma Y$$ 
and apply $F$ to obtain the exact sequence
$$FX \rightarrow FY \rightarrow FZ \rightarrow F\Sigma X \rightarrow F\Sigma Y.$$
Because $F\Sigma \cong \Sigma F,$ this is isomorphic to
$$FX \rightarrow FY \rightarrow FZ \rightarrow \Sigma FX \rightarrow \Sigma FY.$$
It is clear that $FX \rightarrow FY$ is a morphism in $\cM$ and $\Sigma FX \rightarrow \Sigma FY$ is a morphism in $\Sigma \cM.$ But because $\cM = \Sigma \cM,$ both $FX \rightarrow FY$ and $\Sigma FX \rightarrow \Sigma FY$ are morphisms in $\cM.$ So let $C$ be the cokernel object of the map $FX \rightarrow FY,$ and $K$ the kernel object of the map $\Sigma FX \rightarrow \Sigma FY.$ Then
$$0 \rightarrow C \rightarrow FZ \rightarrow K \rightarrow 0$$
is a short exact sequence in $\operatorname{Gr}(k[T]).$ Now, both $C$ and $K$ are in $\cM$ because $\cM$ is closed under kernels and cokernels. This means that $FZ$ is in $\cM$ as well, and hence $Z \in F^{-1}\cM,$ because $\cM$ is closed under extensions. Hence, if there are two objects and a morphism between them in $F^{-1}\cM,$ then these objects complete to a distinguished triangle, and $\cJ = F^{-1}\cM$ is a triangulated subcategory of $\sD(R).$ We also have that $\Sigma^{\pm 1} \cJ \subseteq \cJ$ because $F(\Sigma^{\pm 1} \cJ) = \Sigma^{\pm 1} F(\cJ) = \Sigma^{\pm 1}\cM = \cM.$
\end{proof}

\begin{Definition} \label{def:locallyfinitemodule}
Let $M$ be an object in $\sD(R)$. Then $M$ is a graded module, say, 
$$M = \displaystyle{\bigoplus_{n \in \BZ}} M_n.$$ 
We say that $M$ is a {\textit{locally finite}} module if $\dim_k M_n < \infty$ for each $n \in \BZ$. \demo
\end{Definition}

\begin{Corollary} \label{lem:ex34}
We have that $F^{-1}((\operatorname{gr}(k[T]))^{\ast})$ is a triangulated subcategory of $\sD(R)$. 
\end{Corollary}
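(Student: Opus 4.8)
The plan is to deduce this from Lemma \ref{lem:ex34a} applied to the full subcategory $\cM = (\operatorname{gr}(k[T]))^{\ast}$ of $\operatorname{Gr}(k[T])$, whose objects are the graded $k$-duals $M^{\ast}$, $(M^{\ast})_n = \Hom_k(M_{-n},k)$, of the finitely generated graded $k[T]$-modules $M$. Thus it suffices to check that $\cM$ is closed in $\operatorname{Gr}(k[T])$ under extensions, kernels, and cokernels, and that $\Sigma\cM = \cM$. I would begin by noting that every object of $\cM$ is locally finite in the sense of Definition \ref{def:locallyfinitemodule}: a finitely generated graded $k[T]$-module has finite-dimensional graded components, so the same is true of its graded $k$-dual. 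Consequently the graded $k$-dual restricts to an exact contravariant duality on the locally finite modules, with $M^{\ast\ast}\cong M$ there, and $N \in \cM$ precisely when $N$ is locally finite with $N^{\ast}$ finitely generated.

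Next I would transport the closure properties of $\operatorname{gr}(k[T])$ through this duality. Since $k[T]$ is graded-Noetherian, $\operatorname{gr}(k[T])$ is closed in $\operatorname{Gr}(k[T])$ under submodules, quotients, and extensions, and is $\Sigma$-invariant. Given a morphism $f\colon M \to N$ in $\cM$, its kernel $K$ and cokernel $C$ (formed in $\operatorname{Gr}(k[T])$) are a submodule of $M$ and a quotient of $N$ respectively, hence locally finite; dualizing $0 \to K \to M \to \Image f \to 0$ and $0 \to \Image f \to N \to C \to 0$ yields a surjection $M^{\ast}\twoheadrightarrow K^{\ast}$ and an injection $C^{\ast}\hookrightarrow N^{\ast}$, so $K^{\ast}$ and $C^{\ast}$ are finitely generated and therefore $K = K^{\ast\ast}$ and $C = C^{\ast\ast}$ lie in $\cM$. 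Similarly, for an extension $0 \to A \to B \to C \to 0$ in $\operatorname{Gr}(k[T])$ with $A, C \in \cM$ one has $\dim_k B_n = \dim_k A_n + \dim_k C_n < \infty$, so $B$ is locally finite, and $B^{\ast}$ is an extension of the finitely generated modules $A^{\ast}, C^{\ast}$ and hence finitely generated, giving $B \in \cM$. The identity $\Sigma\cM = \cM$ follows from $(\Sigma M)^{\ast} \cong \Sigma^{-1}(M^{\ast})$ together with the $\Sigma$-invariance of $\operatorname{gr}(k[T])$.

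Having verified the four hypotheses, Lemma \ref{lem:ex34a} applies and shows that $\cJ = F^{-1}(\cM) = F^{-1}((\operatorname{gr}(k[T]))^{\ast})$ is a triangulated subcategory of $\sD(R)$. There is no serious obstacle here; the only point that needs attention is that Lemma \ref{lem:ex34a} requires closure within all of $\operatorname{Gr}(k[T])$, so one must confirm that kernels, cokernels, and extensions of objects of $\cM$ are locally finite before the duality $M \mapsto M^{\ast}$ can be used on them — which is precisely what the elementary dimension count above supplies.
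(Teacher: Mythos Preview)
Your proof is correct and follows essentially the same approach as the paper: verify that $\cM = (\operatorname{gr}(k[T]))^{\ast}$ satisfies the hypotheses of Lemma~\ref{lem:ex34a} by transporting the closure properties of $\operatorname{gr}(k[T])$ through the graded $k$-duality on locally finite modules. The only difference is that you make explicit the check that kernels, cokernels, and extensions of objects of $\cM$ computed in $\operatorname{Gr}(k[T])$ are again locally finite (so that the duality applies), whereas the paper leaves this implicit in the phrase ``$(-)^{\ast}$ is a duality from locally finite modules to itself.''
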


\begin{proof}
Let $\cM = (\operatorname{gr}(k[T]))^{\ast}$ where $\operatorname{gr}$ denotes the category of finitely generated modules. We have that $\operatorname{gr}(k[T])$ is closed under subobjects, quotient objects and extensions (and hence, kernels and cokernels). This implies that $\cM$ is also closed under subobjects, quotient objects and extensions (and hence, kernels and cokernels). This is because $(-)^{\ast}$ is a duality from locally finite modules to itself. Also, $\cM$ is a full subcategory of $\operatorname{Gr}(k[T])$ and $\Sigma \cM = \cM.$ Hence  $F^{-1}\cM$ is a triangulated subcategory of $\sD(R)$, by Lemma \ref{lem:ex34a}. 
\end{proof}

\noindent The following is a well-known property of the polynomial algebra in one variable.

\begin{Lemma} \label{lem:nts}
We have that 
$$\operatorname{gr}(k[T]) = \add\{\Sigma^i k[T], \Sigma^j k[T]/(T^{n+1}) \hspace{1mm} | \hspace{1mm} i,j \in \BZ, n \in \BN_0\},$$ and hence 
$$\cM = \add\{(\Sigma^i k[T])^{\ast}, (\Sigma^j k[T]/(T^{n+1}))^{\ast} \hspace{1mm} | \hspace{1mm} i,j \in \BZ, n \in \BN_0\}.$$
\end{Lemma}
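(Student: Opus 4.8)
The inclusion $\supseteq$ is trivial, since each listed module is finitely generated and graded, so the real content is the reverse inclusion, which is the graded form of the structure theorem for finitely generated modules over a principal ideal domain; here is the plan.

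First I would record that $k[T]$ is a \emph{graded} principal ideal domain: every nonzero homogeneous element has the form $cT^{n}$ with $c\in k^{\times}$ and $n\in\BN_{0}$, hence is a unit times $T^{n}$, so every nonzero homogeneous ideal equals $(T^{n})$ for a unique $n$, and $cT^{n}\mapsto n$ is a Euclidean norm on the homogeneous elements. Next, since $k[T]$ is Noetherian, every $M\in\operatorname{gr}(k[T])$ admits a graded free presentation $F_{1}\xrightarrow{\ \phi\ }F_{0}\to M\to 0$ with $F_{0},F_{1}$ finite direct sums of shifts $\Sigma^{i}k[T]$ and $\phi$ a homomorphism of graded modules (choose homogeneous generators of $M$, then of the kernel). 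In the obvious bases $\phi$ is given by a matrix $A=(a_{lm})$ of homogeneous elements of $k[T]$, each either $0$ or a unit times a power of $T$.

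Then I would carry out a graded Smith normal form reduction. Permuting basis vectors of $F_{0}$ or $F_{1}$, rescaling a basis vector by a unit, and adding a homogeneous multiple of one basis vector to another are graded automorphisms, hence leave $\Coker\phi\cong M$ unchanged while inducing the usual elementary row and column operations on $A$. Choosing a nonzero entry of $A$ of minimal $T$-degree and moving it to the $(1,1)$ slot, it divides every other entry in its row and column, so those can be cleared; iterating on the complementary block---which terminates since the exponents are nonnegative integers---puts $A$ in diagonal form with diagonal entries $T^{e_{1}},T^{e_{2}},\dots$ up to units, where we allow $e_{i}=\infty$ to record a zero row or column. Reading off the cokernel, $M\cong\bigoplus_{i}\Sigma^{c_{i}}\bigl(k[T]/(T^{e_{i}})\bigr)$ for suitable shifts $c_{i}$: summands with $e_{i}=0$ vanish, those with $e_{i}=\infty$ are copies of $\Sigma^{c_{i}}k[T]$, and the rest, with $e_{i}=n+1\ge 1$, are copies of $\Sigma^{c_{i}}k[T]/(T^{n+1})$. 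This gives the first displayed equality, and the second then follows immediately by applying to this decomposition the additive contravariant duality $(-)^{\ast}$ on locally finite modules, which satisfies $(\Sigma^{i}N)^{\ast}\cong\Sigma^{-i}(N^{\ast})$.

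I do not expect a genuine obstacle: the statement is elementary. The only thing needing attention is that each step of the classical PID argument be performed \emph{homogeneously}, which is exactly what the Euclidean norm on the homogeneous elements of $k[T]$ ensures; alternatively one may simply invoke the standard structure theory of finitely generated graded modules over a graded PID.
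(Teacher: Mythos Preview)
Your argument is correct. The paper does not actually prove this lemma: it is introduced by the sentence ``The following is a well-known property of the polynomial algebra in one variable'' and stated without proof. Your write-up supplies a standard proof via graded Smith normal form over the graded PID $k[T]$, which is exactly the kind of argument one would expect to justify the statement; there is nothing to compare.
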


\begin{Lemma} \label{lem:functoranddual1}
We have the following isomorphisms. 
{} \\
(1) $F(\Sigma^i X_n) \cong \Sigma^i k[T]/(T^{n+1}).$ \\
(2) $F(\sE_m) \cong \Sigma^m k[T^{-1}].$
\end{Lemma}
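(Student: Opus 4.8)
The two isomorphisms are of quite different character, so I would treat them separately. For part (1), the object $X_n$ is defined by the distinguished triangle $\Sigma^{n+1}R \xrightarrow{\cdot T^{n+1}} R \to X_n \to \Sigma^{n+2} R$; I would apply the homological functor $F$ (Lemma \ref{lem:Fishom}) to this triangle. Using Remark \ref{rem:thereisafunctor}, $F(R) \cong k[T]$ as a graded $k[T]$-module, and $F(\Sigma^{n+1}R) \cong \Sigma^{n+1} k[T]$. The key point is to identify the map $F(\cdot T^{n+1})$: by the description of scalar multiplication in $(R, \Sigma^* R)$ given in Remark \ref{rem:thereisafunctor}, the induced map $\Sigma^{n+1} k[T] \to k[T]$ is precisely multiplication by $T^{n+1}$. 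Hence the long exact sequence in $\Gr(k[T])$ gives a four-term exact sequence $\Sigma^{n+1}k[T] \xrightarrow{\cdot T^{n+1}} k[T] \to F(X_n) \to \Sigma^{n+2}k[T] \xrightarrow{\cdot T^{n+1}} \Sigma k[T]$. Since multiplication by $T^{n+1}$ on the free module $k[T]$ is injective, the rightmost map is injective, so $F(X_n)$ is the cokernel of $\cdot T^{n+1} : \Sigma^{n+1}k[T] \to k[T]$, which is $k[T]/(T^{n+1})$ with the appropriate grading. Then apply $\Sigma^i$ to get the general statement; one should double-check the grading convention so that $(T^{n+1})$ sits in the right internal degrees, but this is bookkeeping.

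**The homotopy colimit.** For part (2) I would use the defining triangle for a homotopy colimit (Bökstedt–Neeman, \citep{BN}, p.~209): $\sE_m = \hocolim_i(\Sigma^{m-i}X_i)$ fits in a distinguished triangle
$$\xymatrix{\bigoplus_i \Sigma^{m-i}X_i \ar[r]^-{1-\text{shift}} & \bigoplus_i \Sigma^{m-i}X_i \ar[r] & \sE_m \ar[r] & \Sigma\Big(\bigoplus_i \Sigma^{m-i}X_i\Big).}$$
Apply $F$; since $F$ is homological and commutes with coproducts (it is $\Hom(R,\Sigma^*-)$ and $R$ is compact in $\sD(R)$, $R=k[T]$ being a perfect complex), I get an exact sequence computing $F(\sE_m)$ as the cokernel of $1-\text{shift}$ on $\bigoplus_i F(\Sigma^{m-i}X_i) \cong \bigoplus_i \Sigma^{m-i}k[T]/(T^{i+1})$, provided the relevant connecting map is injective. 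By part (1) the transition maps $F(\xi_i): \Sigma^{m-i}k[T]/(T^{i+1}) \to \Sigma^{m-i-1}k[T]/(T^{i+2})$ are (up to scalar) the natural surjections coming from $k[T]/(T^{i+1}) \twoheadleftarrow k[T]/(T^{i+2})$ after a shift — actually one must check it is the \emph{inclusion-type} map dual to the quotient, i.e. the map sending $1 \mapsto T$; identifying these transition maps correctly is where care is needed. The colimit of the resulting direct system $k[T]/(T) \to \Sigma^{-1}k[T]/(T^2) \to \Sigma^{-2}k[T]/(T^3)\to\cdots$ in graded modules is $\Sigma^m k[T^{-1}]$, the graded module which in each nonpositive degree is one-dimensional; and the $\lim^1$ term vanishes because the system is one of surjections-after-reindexing (or because everything is finite-dimensional in each degree), so the cokernel of $1-\text{shift}$ is exactly this colimit.

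**Main obstacle.** The genuinely delicate step is pinning down the transition maps $F(\xi_i)$ precisely — both their grading shift and the fact that the direct limit of $\{\Sigma^{m-i}k[T]/(T^{i+1})\}$ is $\Sigma^m k[T^{-1}]$ rather than, say, $\Sigma^m k[T]$ or $0$. This requires knowing that each $\xi_i$ is a nonzero irreducible map (true by Definition \ref{def:slicedef}, since the relevant hom-space is $1$-dimensional) and that under $F$ it becomes, up to a nonzero scalar, the map $k[T]/(T^{i+1}) \to k[T]/(T^{i+2})$, $\bar 1 \mapsto \bar T$, appropriately shifted — so that the colimit keeps gaining a new basis element in one more negative degree at each stage and stabilizes to the inverse-polynomial module $k[T^{-1}] = \varinjlim T^{-i}k[T]/k[T]$-style object. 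Once the transition maps are correctly identified this is a standard colimit computation, but getting the shifts and the direction right is the crux; I would verify it by checking low-degree pieces ($i=0,1$) by hand against Lemma \ref{lem:nts} and the explicit form of $X_0, X_1$.
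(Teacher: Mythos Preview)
Your argument for part (1) is essentially identical to the paper's: apply the homological functor $F$ to the defining triangle $\Sigma^{n+1}R \xrightarrow{\cdot T^{n+1}} R \to X_n$, use injectivity of multiplication by $T^{n+1}$ on $k[T]$ at both ends of the resulting exact sequence, and read off $F(X_n)$ as the cokernel $k[T]/(T^{n+1})$.

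For part (2) your approach is correct but takes a slightly longer route than the paper. You apply $F$ to the B\"okstedt--Neeman triangle defining the hocolimit, use compactness of $R$ to commute $F$ with the coproducts, and then argue that the connecting map vanishes so that $F(\sE_m)$ is the cokernel of $1-\text{shift}$, i.e.\ the colimit. The paper instead invokes \citep[Lemma 2.8]{N} directly: since $R$ is compact, $(R,\hocolim_i(-)) \cong \colim_i(R,-)$, and hence $F(\hocolim_i \Sigma^{m-i}X_i) \cong \colim_i F(\Sigma^{m-i}X_i)$ in one step, with no need to examine the triangle or check injectivity of any connecting map. The paper also sidesteps your ``main obstacle'' of identifying the transition maps $F(\xi_i)$: it simply writes the direct system from the outset as $R/(T) \xrightarrow{\cdot T} \Sigma^{-1}R/(T^2) \xrightarrow{\cdot T} \cdots$ (identifying $X_i$ with $R/(T^{i+1})$), so that after applying $F$ the system is manifestly $A/(T) \xrightarrow{\cdot T} \Sigma^{-1}A/(T^2) \xrightarrow{\cdot T} \cdots$ with colimit $k[T^{-1}]$. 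Your route works, but the paper's is shorter and avoids the bookkeeping you flag as delicate.
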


\begin{proof} \leavevmode \\
(1) Because $F\Sigma \cong \Sigma F,$ it is enough to show that $F(X_n) \cong k[T]/(T^{n+1})$. By \citep[Remark 1.3]{J}, we have a distinguished triangle 
\begin{equation} \label{equ:inddistri}
\xymatrix{
\Sigma^{n+1}R \ar[rr]^-{\cdot T^{n+1}} && R \ar[rr] && X_n
}
\end{equation}
and by Lemma \ref{lem:Fishom}, we obtain the following long exact sequence after applying $F.$
$$ \xymatrix{
\cdots \ar[r] & F(\Sigma^{n+1} R) \ar[rr]^-{F(\cdot T^{n+1})} && F(R) \ar[rr] && F(X_n) \ar[r] & \cdots 
}
$$
This becomes
$$\xymatrix{
\cdots \ar[r] & \Sigma^{n+1} A \ar[rr]^-{\cdot T^{n+1}} && A \ar[r] & F(X_n) \ar[r] & \Sigma^{n+2}A \ar[rr]^-{\Sigma(\cdot T^{n+1})} && \Sigma A \ar[r] & \cdots
}$$
where $A = (R,\Sigma^{\ast}R) \cong k[T].$ Now, $\cdot T^{n+1} : \Sigma^{n+1} A \rightarrow A$ is injective, whence $F(X_n) \rightarrow \Sigma^{n+2}A$ is the zero map because the sequence is exact and $\Sigma(\cdot T^{n+1}) : \Sigma^{n+2}A \rightarrow \Sigma A$ is injective. Therefore, $A \rightarrow F(X_n)$ is surjective, and by the first isomorphism theorem, $F(X_n) \cong A/\Image(\cdot T^{n+1}) \cong k[T]/(T^{n+1}).$
\leavevmode \\
\leavevmode \\
(2) Because $F(\sE_m) = F(\Sigma^m \sE_0) \cong \Sigma^m F(\sE_0),$ it is enough to show that $F(\sE_0) \cong k[T^{-1}].$ Now, 
$$\hocolim(\Sigma^0 X_0 \rightarrow \Sigma^{-1}X_1 \rightarrow \cdots)=$$ 
$$\hocolim(\xymatrix@1{R/(T) \ar[r]^-{\cdot T} & \Sigma^{-1}R/(T^2) \ar[r]^-{\cdot T} & \cdots
})=\sE_0.
$$
Now, $R$ is a compact object of the derived category $\sD(R)$, so by Lemma 2.8 of \citep{N}, applying the functor $F$ gives
$$F(\sE_0) = F(\hocolim(\xymatrix@1{R/(T) \ar[r]^-{\cdot T} & \Sigma^{-1}R/(T^2) \ar[r]^-{\cdot T} & \cdots
})) = 
$$
$$
\colim (F(\xymatrix@1{R/(T) \ar[r]^-{\cdot T} & \Sigma^{-1}R/(T^2) \ar[r]^-{\cdot T} & \cdots
})) =
$$
$$
\colim (\xymatrix@1{A/(T) \ar[r]^-{\cdot T} & \Sigma^{-1}A/(T^2) \ar[r]^-{\cdot T} & \cdots
}) = k[T^{-1}].$$
\end{proof}

\begin{Lemma} \label{lem:functoranddual2}
We have the following isomorphisms in $\operatorname{Gr}(k[T])$. 
{} \\
(A) $(\Sigma^{-i-n} k[T]/(T^{n+1}))^{\ast} \cong \Sigma^i k[T]/(T^{n+1}).$ \\
(B) $(\Sigma^{-m} k[T])^{\ast} \cong \Sigma^m k[T^{-1}].$
\end{Lemma}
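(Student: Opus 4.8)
The plan is to unwind the definition of the graded dual and reduce both isomorphisms to two ``unshifted'' statements. Recall that for a locally finite graded module $M=\bigoplus_{j\in\BZ}M^j$ the dual $M^{\ast}$ is the locally finite graded module with $(M^{\ast})^j=\Hom_k(M^{-j},k)$ and with $T$ acting as the transpose of its action on $M$; this makes $(-)^{\ast}$ an exact, degree-preserving, contravariant self-duality on locally finite modules (as already used in the proof of Corollary \ref{lem:ex34}). Two trivial observations carry the argument. First, $(-)^{\ast}$ interacts with the shift by $(\Sigma^a M)^{\ast}\cong\Sigma^{-a}(M^{\ast})$, since $((\Sigma^aM)^{\ast})^j=\Hom_k(M^{a-j},k)=(\Sigma^{-a}(M^{\ast}))^j$. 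Second, using this shift identity, (A) is equivalent to $\bigl(k[T]/(T^{n+1})\bigr)^{\ast}\cong\Sigma^{-n}k[T]/(T^{n+1})$ and (B) is equivalent to $\bigl(k[T]\bigr)^{\ast}\cong k[T^{-1}]$. So it suffices to prove these two.

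For the reduced form of (A): $k[T]/(T^{n+1})$ is concentrated in degrees $0,-1,\dots,-n$, with $T^a$ spanning degree $-a$ and multiplication by $T$ an isomorphism of degree $-a$ onto degree $-a-1$ for $0\le a\le n-1$ (and zero out of degree $-n$). Let $e_0,\dots,e_n$ be the dual basis, so $e_a$ lies in degree $a$ of the dual. Computing the transpose of multiplication by $T$ gives $T\cdot e_a=e_{a-1}$ for $1\le a\le n$ and $T\cdot e_0=0$; hence $e_a\mapsto T^{n-a}$ is a graded $k[T]$-module isomorphism $\bigl(k[T]/(T^{n+1})\bigr)^{\ast}\xrightarrow{\ \sim\ }\Sigma^{-n}k[T]/(T^{n+1})$, the shift $\Sigma^{-n}$ being exactly what records that the cyclic generator sits in the top degree of $k[T]/(T^{n+1})$ but in the socle of its dual. (Alternatively one can avoid bases by noting that the Frobenius pairing $(f,g)\mapsto(\text{coefficient of }T^n\text{ in }fg)$ on $k[T]/(T^{n+1})$ is perfect and pairs degree $d$ with degree $-n-d$.) Twisting by $\Sigma^{i+n}$ and applying the shift identity yields (A).

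For the reduced form of (B): $k[T]$ is concentrated in degrees $\le 0$, with $T^a$ spanning degree $-a$ and $T$ an isomorphism of degree $-a$ onto degree $-a-1$ for every $a\ge 0$. Writing $f_a$ for the functional dual to $T^a$ (in degree $a$ of the dual), the same transpose computation gives $T\cdot f_a=f_{a-1}$ for $a\ge 1$ and $T\cdot f_0=0$. On the other hand $k[T^{-1}]=\bigoplus_{a\ge 0}kT^{-a}$ is precisely the graded $k[T]$-module with $T^{-a}$ in degree $a$, $T\cdot T^{-a}=T^{-a+1}$ for $a\ge 1$ and $T\cdot 1=0$ --- this is the description of $k[T^{-1}]$ appearing in the proof of Lemma \ref{lem:functoranddual1}(2). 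Hence $f_a\mapsto T^{-a}$ is a graded $k[T]$-module isomorphism $\bigl(k[T]\bigr)^{\ast}\cong k[T^{-1}]$, and applying $\Sigma^m$ gives (B).

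None of this is deep; the one point that must be handled carefully is the degree bookkeeping under the duality --- the convention ``$T$ lowers degree by one'' is kept on both sides, so the duality swaps the positions of the top and the socle, and it is this swap that produces the shift $\Sigma^{-n}$ in (A) and makes the answer in (B) the divisible module $k[T^{-1}]$ rather than $k[T]$ itself. Once the transpose action $T\cdot e_a=e_{a-1}$, $T\cdot e_0=0$ is verified, the module isomorphisms and their graded compatibility are immediate, and the identity $(\Sigma^aM)^{\ast}\cong\Sigma^{-a}M^{\ast}$ finishes the reduction.
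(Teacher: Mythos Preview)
Your proof is correct and follows essentially the same route as the paper: both reduce via the identity $(\Sigma^a M)^{\ast}\cong\Sigma^{-a}M^{\ast}$ to the unshifted cases and then write down the same explicit isomorphisms (your $e_a\mapsto T^{n-a}$ and $f_a\mapsto T^{-a}$ are exactly the paper's maps $\Psi^i(\alpha)=\alpha(T^i)T^{n-i}$ and $\Phi^i(\alpha)=\alpha(T^i)T^{-i}$ evaluated on the dual basis), checking $T$-compatibility by the same transpose computation. Your dual-basis formulation is a bit crisper, and the aside on the Frobenius pairing is a nice alternative, but there is no substantive difference in approach.
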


\begin{proof} \leavevmode \\
(A) Because $(\Sigma^i M)^{\ast} \cong \Sigma^{-i} (M^{\ast}),$ it is enough to show that $(k[T]/(T^{n+1}))^{\ast} \cong \Sigma^{-n}k[T]/(T^{n+1}).$ Consider the following component of an $A$-module homomorphism,
$$\Psi^i : \Hom \Big( (k[T]/(T^{n+1}))^{-i},k \Big) \rightarrow \Big( k[T]/(T^{n+1}) \Big) ^{i-n}$$
defined on $0 \leq i \leq n$ by $\Psi^i(\alpha) = \alpha(T^i) \cdot T^{n-i}$ and by $\Psi^i(\alpha) = 0$ if $i$ is not in this range. To see that this is indeed compatible with multiplication by $A$-elements, consider the following diagram.
\[ 
  \xymatrix {
	\Hom \Big( (k[T]/(T^{n+1}))^{-i+1},k \Big) \ar[dd]^-{\Psi^{i-1}} && \Hom \Big( (k[T]/(T^{n+1}))^{-i},k \Big) \ar[ll]_-{\cdot T} \ar[dd]^-{\Psi^i} \\ \\
	\Big( k[T]/(T^{n+1}) \Big) ^{i-n-1} && \Big( k[T]/(T^{n+1}) \Big) ^{i-n} \ar[ll]^-{\cdot T}
 	}
\]
If $0 \leq i-1 < i \leq n$ and $\alpha \in \Hom \Big( (k[T]/(T^{n+1}))^{-i},k \Big),$ then diagram-chasing yields 
$$T \cdot \Psi^i(\alpha) = T\alpha(T^i)\cdot T^{n-i} = \alpha(T^i)\cdot T^{1+n-i} = \alpha(T^{i-1+1})\cdot T^{1+n-i} =$$ 
$$(\alpha T)(T^{i-1})\cdot T^{1+n-i} = \Psi^{i-1}(\alpha T).$$ 
If $i=0$, then $T \cdot \Psi^0(\alpha) = T\alpha(T^0)\cdot T^{n} = \alpha(T^0) \cdot T^{n+1} = 0 = \Psi^{-1}(\alpha T)$ as required. Finally at each $i$ we have a $k$-linear map which is nonzero for $0 \leq i \leq n$ between $1$-dimensional $k$-vector spaces and hence each $\Psi^i$ is an isomorphism of $k$-vector spaces (and for $i$ outside $0,\dots,n,$ it is just the zero map). This, combined with compatibility with $A$-multiplication gives the result, namely that $(k[T]/(T^{n+1}))^{\ast} \cong \Sigma^{-n}k[T]/(T^{n+1}).$
\leavevmode \\
\leavevmode \\
(B) Because $(\Sigma^i M)^{\ast} \cong \Sigma^{-i} (M^{\ast}),$ it is enough to show that $(k[T])^{\ast} \cong k[T^{-1}].$ Consider the following component of an $A$-module homomorphism,
$$\Phi^i : \Hom \Big( (k[T])^{-i},k \Big) \rightarrow \Big( k[T^{-1}] \Big) ^{i} : \alpha \mapsto \alpha(T^i)\cdot T^{-i}.$$
Again, we check compatibility with multiplication by $A$-elements. Consider the following diagram.
\[ 
  \xymatrix {
	\Hom \Big( (k[T])^{-i+1},k \Big) \ar[dd]^-{\Phi^{i-1}} && \Hom \Big( (k[T])^{-i},k \Big) \ar[ll]_-{\cdot T} \ar[dd]^-{\Phi^i} \\ \\
	\Big( k[T^{-1}] \Big) ^{i-1} && \Big( k[T^{-1}] \Big) ^{i} \ar[ll]^-{\cdot T}
 	}
\]
Diagram-chasing is then even simpler than in part (A), since $\alpha \in \Hom \Big( (k[T])^{-i},k \Big)$ can easily be seen to map to $\alpha(T^i)\cdot T^{1-i} \in \Big( k[T^{-1}] \Big) ^{i-1}$ after following either direction in the diagram.  For similar reasons as in part (A), we have that the collection $(\Phi^i)_{i \in \BZ}$ is an isomorphism between $(k[T])^{\ast}$ and $k[T^{-1}].$
\end{proof}

\begin{Corollary} \label{cor:ntsplusfunctoranddual}
By Lemmas \ref{lem:nts},\ref{lem:functoranddual1} and \ref{lem:functoranddual2}, 
$$\cM = \add\{F(\Sigma^i X_n), F(\sE_m) \hspace{1mm} | \hspace{1mm} i,m \in \BZ, n \in \BN_0 \}.$$
\end{Corollary}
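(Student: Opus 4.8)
The plan is to take the explicit generating set for $\cM$ provided by Lemma \ref{lem:nts} and rewrite each generator, via Lemmas \ref{lem:functoranddual1} and \ref{lem:functoranddual2}, as one of the objects $F(\Sigma^i X_n)$ or $F(\sE_m)$; since the additive hull of a collection of objects depends only on that collection up to isomorphism, matching the two generating sets then finishes the proof.

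First I would handle the torsion generators. Fix $n \in \BN_0$ and $\ell \in \BZ$, and write $\ell = -i-n$ with $i := -\ell - n \in \BZ$. Then Lemma \ref{lem:functoranddual2}(A) gives $(\Sigma^{\ell} k[T]/(T^{n+1}))^{\ast} \cong \Sigma^{i} k[T]/(T^{n+1})$, and Lemma \ref{lem:functoranddual1}(1) identifies the right-hand side with $F(\Sigma^{i} X_n)$. As $\ell$ ranges over $\BZ$ (with $n$ fixed) the index $i = -\ell - n$ also ranges over all of $\BZ$, so the family $\{(\Sigma^{\ell} k[T]/(T^{n+1}))^{\ast} \mid \ell \in \BZ,\ n \in \BN_0\}$ coincides up to isomorphism with $\{F(\Sigma^{i} X_n) \mid i \in \BZ,\ n \in \BN_0\}$.

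Next I would handle the free generators. Fix $j \in \BZ$ and set $m := -j \in \BZ$. Lemma \ref{lem:functoranddual2}(B) gives $(\Sigma^{j} k[T])^{\ast} = (\Sigma^{-m} k[T])^{\ast} \cong \Sigma^{m} k[T^{-1}]$, and Lemma \ref{lem:functoranddual1}(2) identifies the right-hand side with $F(\sE_m)$. As $j$ ranges over $\BZ$ so does $m = -j$, so $\{(\Sigma^{j} k[T])^{\ast} \mid j \in \BZ\}$ coincides up to isomorphism with $\{F(\sE_m) \mid m \in \BZ\}$.

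Combining these two identifications with Lemma \ref{lem:nts} yields $\cM = \add\{F(\Sigma^i X_n), F(\sE_m) \mid i, m \in \BZ,\ n \in \BN_0\}$, as claimed. There is no genuine obstacle beyond keeping the index substitutions straight; the one point worth stating explicitly is that each substitution (namely $\ell \mapsto -\ell-n$ for fixed $n$, and $j \mapsto -j$) is a bijection of $\BZ$, so that passing to isomorphism classes carries one generating set onto the other with nothing lost or gained.
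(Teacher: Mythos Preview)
Your proposal is correct and is exactly the argument the paper has in mind: the corollary is stated without proof, simply citing Lemmas~\ref{lem:nts}, \ref{lem:functoranddual1}, and \ref{lem:functoranddual2}, and what you have written is precisely the routine unpacking of how those three lemmas combine. The only content is the bookkeeping of the index substitutions, which you have handled carefully.
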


\begin{Lemma} \label{lem:KYZ}
We have that $\Dbar = F^{-1}\cM.$
\end{Lemma}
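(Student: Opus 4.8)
The plan is to prove the two inclusions $\Dbar \subseteq F^{-1}\cM$ and $F^{-1}\cM \subseteq \Dbar$ separately. The first inclusion is the easy half: by definition $\Dbar = \add\{\Sigma^i X_n, \sE_m\}$, and Lemma \ref{lem:functoranddual1} together with Lemma \ref{lem:nts} shows that $F$ sends each generator $\Sigma^i X_n$ and $\sE_m$ into $\cM$. Since $F$ is additive and $\cM$ is closed under finite direct sums and summands (being of the form $\add\{\cdots\}$), it follows that $F(\Dbar) \subseteq \cM$, i.e.\ $\Dbar \subseteq F^{-1}\cM$.

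For the reverse inclusion, let $Y \in F^{-1}\cM$, so $F(Y) \in \cM$. By Corollary \ref{cor:ntsplusfunctoranddual}, $\cM = \add\{F(\Sigma^i X_n), F(\sE_m)\}$, so $F(Y)$ is a finite direct sum of copies of objects $F(\Sigma^i X_n)$ and $F(\sE_m)$; write this decomposition as $F(Y) \cong \bigoplus_{\lambda} F(G_\lambda)$ where each $G_\lambda$ is one of the indecomposable generators of $\Dbar$. Set $G = \bigoplus_\lambda G_\lambda \in \Dbar$; then $F(Y) \cong F(G)$. The goal is now to upgrade this isomorphism in $\operatorname{Gr}(k[T])$ to an isomorphism $Y \cong G$ in $\sD(R)$, which would place $Y$ in $\Dbar$. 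The natural route is to show that $F$, restricted to the full subcategory $F^{-1}\cM$ (or at least on the relevant objects), reflects isomorphisms: one would want that $F$ is faithful and full enough on this subcategory that a morphism lifting the isomorphism exists and remains an isomorphism. Concretely I would try to establish that for the generators, $F$ induces a bijection $\Hom_{\sD(R)}(G, Y) \to \Hom_{\operatorname{Gr}(k[T])}(F(G), F(Y))$, perhaps by reducing to the building blocks $R$, $X_n$, $\sE_m$ and using that $\Hom_{\sD(R)}(R,-) \cong H^0(-)$ behaves well, then transport the inverse isomorphism back. Alternatively, one can argue via the triangulated structure: Corollary \ref{lem:ex34} gives that $F^{-1}\cM$ is a triangulated subcategory, and one can try to show $\Dbar$ is exactly the objects of this subcategory by a dévissage, writing an arbitrary $Y$ with $F(Y)\in\cM$ as built from the generators through triangles whose $F$-image realizes the known decomposition of $\cM$ into its $\add$-generators.

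The main obstacle I expect is precisely this lifting/reflecting-isomorphisms step: knowing $F(Y) \cong F(G)$ as graded modules does not immediately give $Y \cong G$ in $\sD(R)$, since $F$ is only homological, not obviously faithful or an equivalence onto its image. The crux is to exploit the very rigid structure of $\cM$ — it is generated under $\add$ by the explicitly understood modules $\Sigma^j k[T]/(T^{n+1})$ and $\Sigma^m k[T^{-1}]$, each of which has a unique preimage (up to isomorphism) among the generators of $\Dbar$ by Lemmas \ref{lem:functoranddual1} and \ref{lem:functoranddual2} — so that the decomposition of $F(Y)$ is forced, and then a Krull--Schmidt / local-ring argument on $\End_{\sD(R)}$ of the generators lets one conclude. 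I would handle this by first verifying that distinct generators $\Sigma^i X_n, \sE_m$ of $\Dbar$ have non-isomorphic, indecomposable $F$-images with local endomorphism rings, so that the matching of summands is unambiguous, and then assembling the isomorphism $Y \cong G$ summand by summand. The remaining verifications — additivity of $F$, closure of $\cM$ under summands, and the identification of preimages — are routine given the earlier lemmas.
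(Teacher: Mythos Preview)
Your first inclusion $\Dbar \subseteq F^{-1}\cM$ is correct and matches the paper. You have also correctly identified the crux of the reverse inclusion: one must upgrade an isomorphism $F(Y) \cong F(G)$ in $\Gr(k[T])$ to an isomorphism $Y \cong G$ in $\sD(R)$. But your proposed resolution has a genuine gap. The Krull--Schmidt argument is circular: knowing that $F(Y)$ decomposes into indecomposables with local endomorphism rings tells you nothing about how $Y$ itself decomposes in $\sD(R)$. To transfer the decomposition back you would need $F$ to reflect direct-sum decompositions, which is essentially the same as reflecting isomorphisms --- exactly the statement you are trying to prove. The ``summand by summand'' assembly therefore assumes what it needs to show, and the alternative d\'evissage sketch is too vague to close the gap on its own.

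The paper does not attempt this step by hand. It first establishes the equality of \emph{images}, $F(\Dbar) = \cM$, by the same computations you use, and then invokes an external theorem \cite[thm.\ 3.1]{KYZ} to pass from $F(\Dbar) = \cM$ to $\Dbar = F^{-1}\cM$. The content of that citation is precisely that $F = H^*(-)$ detects isomorphisms for DG modules over this $R$: here $R = k[T]$ has zero differential, hence is formal, and $H^*R \cong k[T]$ is hereditary (global dimension $1$), so objects of $\sD(R)$ are determined up to isomorphism by their cohomology. If you want a self-contained proof, this is the statement you must establish; your d\'evissage idea is closer in spirit to how one proves it, but it requires the hereditary/formal input to split the relevant triangles, and that input is what is being imported from \cite{KYZ}.
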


\begin{proof}
Corollary \ref{cor:ntsplusfunctoranddual} implies 
$$F(\Dbar) = F(\add\{\Sigma^iX_n,\sE_m \hspace{1.5mm} | \hspace{1.5mm} i,m \in \BZ, n \in \BN_0\})\subseteq$$
$$\add\{F(\Sigma^iX_n),F(\sE_m) \hspace{1.5mm} | \hspace{1.5mm} i,m \in \BZ, n \in \BN_0\} =\cM.$$ 
On the other hand, Lemmas \ref{lem:nts} and \ref{lem:functoranddual2} show that each object in $\cM$ is a direct sum of finitely many objects from $\{\Sigma^ik[X]/(X^{n+1}),\Sigma^mk[T^{-1}] \hspace{1.5mm} | \hspace{1.5mm}  i,m \in \BZ, n \in \BN_0\},$ so $\cM \subseteq F(\add\{\Sigma^iX_n,\sE_m \hspace{1.5mm} | \hspace{1.5mm} i,m \in \BZ, n \in \BN_0\}) =$ $F(\Dbar)$ by Lemma \ref{lem:functoranddual1}. We conclude $F(\Dbar) = \cM$ which implies $\Dbar = F^{-1}\cM$ by \citep[thm 3.1]{KYZ}.
\end{proof}

\begin{Theorem} \label{cor:Distri}
The category $\Dbar$ is a triangulated subcategory of $\sD(R).$
\end{Theorem}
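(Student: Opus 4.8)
The plan is to assemble the conclusion directly from the results that precede it. By Lemma~\ref{lem:KYZ} we have $\Dbar = F^{-1}\cM$, where $\cM = (\operatorname{gr}(k[T]))^{\ast}$ is the subcategory of duals of finitely generated graded $k[T]$-modules sitting inside $\operatorname{Gr}(k[T])$. By Corollary~\ref{lem:ex34}, $F^{-1}\cM$ is a triangulated subcategory of $\sD(R)$. Combining these two facts immediately yields that $\Dbar$ is a triangulated subcategory of $\sD(R)$, which is the assertion.

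More explicitly, I would first invoke Lemma~\ref{lem:KYZ} to replace $\Dbar$ with $F^{-1}\cM$; this is the step that identifies the abstractly-defined category $\add\{\Sigma^i X_n, \sE_m\}$ with a preimage under the homological functor $F$. Then I would cite Corollary~\ref{lem:ex34} — or, tracing one level deeper, Lemma~\ref{lem:ex34a} applied to the subcategory $\cM$, which was checked there to be a full subcategory of $\operatorname{Gr}(k[T])$ closed under kernels, cokernels and extensions and satisfying $\Sigma\cM = \cM$ — to conclude that $F^{-1}\cM$ is triangulated and closed under $\Sigma^{\pm 1}$. Since $\Dbar = F^{-1}\cM$, the category $\Dbar$ is therefore a triangulated subcategory of $\sD(R)$.

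Since all the substantive work has already been carried out in Lemmas~\ref{lem:Fishom}, \ref{lem:ex34a}, \ref{lem:functoranddual1}, \ref{lem:functoranddual2} and~\ref{lem:KYZ} (the homologicality of $F$, the fact that the preimage construction produces triangulated subcategories, the computation of $F$ on the relevant indecomposables and Pr\"ufer objects, and the identification $\Dbar = F^{-1}\cM$), there is essentially no obstacle remaining here: the theorem is a one-line corollary of Lemma~\ref{lem:KYZ} and Corollary~\ref{lem:ex34}. The only point deserving a word of care is that "triangulated subcategory" should be understood to include closure under the shift $\Sigma^{\pm 1}$ and under the formation of cones, both of which are explicitly established in the proof of Lemma~\ref{lem:ex34a}; so it suffices to note that these properties transport along the equality $\Dbar = F^{-1}\cM$.
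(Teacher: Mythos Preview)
Your proposal is correct and follows exactly the same approach as the paper: the paper's proof simply cites Corollary~\ref{lem:ex34} and Lemma~\ref{lem:KYZ} and says the result follows. Your additional commentary unpacking why these two ingredients suffice is accurate and matches the structure of the preceding lemmas.
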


\begin{proof}
By Lemmas \ref{lem:ex34} and \ref{lem:KYZ}, the result follows.
\end{proof}

\section{Morphisms in the category $\Dbar$}
\label{sec:morphisms}

In this section we compute homs between finite objects (i.e. the $X_n$ and their shifts) and hocolimits (i.e. the $\sE_m$ and their shifts) in the category $\Dbar$ and also homs between hocolimits.

\begin{Definition} \label{def:wedge}
Let $X \in \ind\cD,$ i.e., $X$ is an indecomposable object of $\cD,$ which means that $X$ is represented by a vertex in the Auslander-Reiten quiver of $\cD$. We write $X$ uniquely as $\Sigma^{n-i}X_i$ for $n \in \BZ$, $i \in \BN_0$. Then there is a unique slice associated with the direct system
$$ \Sigma^nX_0 \rightarrow \Sigma^{n-1}X_1 \rightarrow \cdots \rightarrow \Sigma^{n-i}X_i \rightarrow \cdots$$
containing $\Sigma^{n-i}X_i$. We define the \textit{wedge associated with} $\Sigma^{n-i}X_i$ (or \textit{wedge based at} $\Sigma^{n-i}X_i$ if $i=0$) as
$$\BW(\Sigma^{n-i}X_i) = \{ \Sigma^{n-j}X_k : 0 \leq j \leq k \}.$$
Pictorially, we have the following, where the subset contains the edges.
\[ \def\objectstyle{\scriptstyle}
  \xymatrix @-2.5pc @! {
    & \vdots \ar[dr] & & \vdots \ar[dr] & & \vdots \ar[dr] & & \vdots \ar[dr] & & \vdots \ar[dr] & & \vdots & \\
    \cdots \ar[dr]& & \circ \ar[ur] \ar[dr] & & \circ \ar[ur] \ar[dr] & & \circ \ar[ur] \ar[dr] & & \circ \ar[ur] \ar[dr] & & \circ \ar[ur] \ar[dr] & & \cdots \\
    & \circ \ar[ur] \ar[dr] & & \circ \ar[ur] \ar[dr] & & \circ \ar[ur] \ar[dr] & & \circ \ar[ur] \ar[dr] & & \circ \ar[ur] \ar[dr] & & \circ \ar[ur] \ar[dr] & \\
    \cdots \ar[ur]\ar[dr]& & \circ \ar[ur] \ar[dr] & & \circ \ar[ur] \ar[dr] & & \circ \ar[ur] \ar[dr] & & \circ \ar[ur] \ar[dr] & & \circ \ar[ur] \ar[dr] & & \cdots\\
    & \circ \ar[ur] & & \Sigma^{n+1} X_0 \ar[ur] & & \Sigma^n X_0 \ar[ur] \ar@{~}[uuuullll] \ar@{~}[uuuurrrr] & & \Sigma^{n-1}X_0 \ar[ur] & & \Sigma^{n-2}X_0 \ar[ur] & & \circ \ar[ur] & \\
               }
\]
Notice that $\BW(\Sigma^{n-i}X_i)$ is independent of the number $i$. In particular, a general indecomposable object $\Sigma^{n-i}X_i$ has wedge based at $\Sigma^nX_0$. Therefore, we may choose to write $\BW(\Sigma^{n}X_0)$ instead of $\BW(\Sigma^{n-i}X_i)$, because they are equal. \demo
\end{Definition}

\begin{Notation} \label{not:Hplusminus}
The sets
$$H^{-}(\Sigma^r X_s) = \{\Sigma^{-n}X_{n-m-2} \hspace{1.5mm} | \hspace{1.5mm} m \leq -r-s-3, -r-s-1 \leq n \leq -r-1 \},$$ 
$$H^{+}(\Sigma^r X_s) = \{\Sigma^{-n}X_{n-m-2} \hspace{1.5mm} | \hspace{1.5mm} -r-s-1 \leq m \leq -r-1, -r+1 \leq n \}$$
and $H(\Sigma^r X_s)=H^{-}(\Sigma^r X_s) \cup H^{+}(\Sigma^r X_s)$ were originally defined in \citep[Definition 2.1]{J}. These subsets are illustated below.
$$
\vcenter{
  \xymatrix @-5.1pc @! {
    &&&*{} &&&&&&&& *{}&& \\
    &&&& *{} \ar@{--}[ul] & & & & & & *{} \ar@{--}[ur] \\
    &*{}&& H^-(\Sigma^r X_s) & & & & & & & & H^+(\Sigma^r X_s) && *{}\\
    &&*{}\ar@{--}[ul]& & & & {\scriptscriptstyle \Sigma^{r+1} X_s\hspace{3ex}} \ar@{-}[ddll] \ar@{-}[uull] & {\scriptscriptstyle \Sigma^r X_s} & {\hspace{3ex}\scriptscriptstyle \Sigma^{r-1} X_s} \ar@{-}[ddrr] \ar@{-}[uurr]& & &&*{}\ar@{--}[ur]&\\ 
    && \\
    *{}\ar@{--}[r]&*{} \ar@{-}[rrr] && & {\scriptscriptstyle \Sigma^{r+s+1} X_0} \ar@{-}[uull]\ar@{-}[rrrrrr]& & & & & & {\scriptscriptstyle \Sigma^{r-1} X_0} \ar@{-}[uurr]\ar@{-}[rrr]&&&*{}\ar@{--}[r]&*{}\\
           }
}
$$
In this picture, the subsets include the edges. \demo

\end{Notation}

\begin{Lemma}\label{lem:lemmadual}
Let
\begin{equation}\label{equ:slice}
\xymatrix{
\Sigma^nX_0 \ar[r]^-{\xi_0} & \Sigma^{n-1}X_1 \ar[r]^-{\xi_1} & \Sigma^{n-2}X_2 \ar[r]& \cdots
}
\end{equation}
be a direct system associated with a slice in the Auslander-Reiten quiver of $\cD$, and let $Y$ be any indecomposable object of $\cD$. Consider the direct system
\begin{equation} \label{equ:origchain}
\xymatrix{
(Y,\Sigma^nX_0) \ar[r]^-{(Y,\xi_0)} & (Y,\Sigma^{n-1}X_1) \ar[r]^-{(Y,\xi_1)} & (Y,\Sigma^{n-2}X_2) \ar[r]& \cdots,
}
\end{equation} which is obtained by applying the functor $(Y,-)$ to the direct system (\ref{equ:slice}) above. Then the direct system (\ref{equ:origchain}) is naturally isomorphic to the direct system \begin{equation} \label{equ:newchain}
\xymatrix{
(\Sigma^nX_0,\Sigma^2Y)^\ast \ar[rr]^-{(\xi_0,\Sigma^2Y)^\ast} &&   (\Sigma^{n-1}X_1,\Sigma^2Y)^\ast \ar[rr]^-{(\xi_1,\Sigma^2Y)^\ast} &&  (\Sigma^{n-2}X_2,\Sigma^2Y)^\ast \ar[r] &  \cdots.
}
\end{equation}
Furthermore,
\begin{equation}\label{equ:kor0} (\Sigma^{n-i}X_{i},\Sigma^2Y)^\ast \cong \Big \{ \begin{matrix} k \hspace{1.5 mm} \textrm{if} \hspace{2 mm} \Sigma^2Y \in H(\Sigma^{n-i+1}X_i), \\ 0 \hspace{1.5 mm} \textrm{if} \hspace{2 mm} \Sigma^2Y \not\in H(\Sigma^{n-i+1}X_i). \end{matrix} \end{equation}
\end{Lemma}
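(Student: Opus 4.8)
The plan is to derive both parts from the $2$-Calabi--Yau property of $\cD$, together with the description of morphism spaces between indecomposables of $\cD$ already available from \citep{J}.

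For the naturality statement I would use that, $\cD$ being $2$-Calabi--Yau, $\Sigma^{2}$ is a Serre functor, so there is an isomorphism $\eta_{A,B}\colon\Hom_{\cD}(A,B)\xrightarrow{\ \sim\ }\Hom_{\cD}(B,\Sigma^{2}A)^{\ast}$ natural in both $A$ and $B$. Fixing $A=Y$ and letting $B$ range over the terms $\Sigma^{n-i}X_{i}$ of the slice gives, for each $i$, an isomorphism $(Y,\Sigma^{n-i}X_{i})\cong(\Sigma^{n-i}X_{i},\Sigma^{2}Y)^{\ast}$. To see that these assemble into an isomorphism of direct systems I would apply the naturality of $\eta_{Y,-}$ to the irreducible morphism $\xi_{i}\colon\Sigma^{n-i}X_{i}\to\Sigma^{n-i-1}X_{i+1}$: this is the commutativity of the naturality square for $\eta_{Y,-}$ at $\xi_{i}$, whose two vertical maps are $(Y,\xi_{i})$ and $(\xi_{i},\Sigma^{2}Y)^{\ast}$. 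The point to flag here is that, although $(-,\Sigma^{2}Y)$ is contravariant, its composition with the $k$-dual is covariant, so $(\xi_{i},\Sigma^{2}Y)^{\ast}$ runs in the forward direction and (\ref{equ:newchain}) really is a direct system, isomorphic via the family $(\eta_{Y,\Sigma^{n-i}X_{i}})_{i\ge 0}$ to (\ref{equ:origchain}). This part is purely formal.

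For the dimension formula (\ref{equ:kor0}) I would first use that $\Sigma$ is an autoequivalence of $\cD$ to rewrite
\[
\Hom_{\cD}(\Sigma^{n-i}X_{i},\Sigma^{2}Y)\;\cong\;\Hom_{\cD}(\Sigma^{n-i+1}X_{i},\Sigma^{3}Y)\;=\;\Ext^{1}_{\cD}(\Sigma^{n-i+1}X_{i},\Sigma^{2}Y).
\]
Since passing to the $k$-dual leaves the dimension unchanged, it then suffices to recall that the sets $H(-)$ of Notation~\ref{not:Hplusminus} were introduced in \citep[Definition~2.1]{J} precisely to record the non-vanishing locus of $\Ext^{1}_{\cD}$ between indecomposables --- the ``crossing of arcs $\leftrightarrow$ non-vanishing of $\Ext$'' principle recalled in the introduction --- the value being one-dimensional there and zero elsewhere. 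Applying this with first argument $\Sigma^{n-i+1}X_{i}$ and second argument $\Sigma^{2}Y$ yields (\ref{equ:kor0}); the shift by one appearing in $H(\Sigma^{n-i+1}X_{i})$ is exactly the one produced by rewriting $\Hom_{\cD}(\Sigma^{n-i}X_{i},\Sigma^{2}Y)$ as $\Ext^{1}_{\cD}(\Sigma^{n-i+1}X_{i},\Sigma^{2}Y)$. If \citep{J} already states the $\Hom$-version with the shift built in, this step is just a citation.

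I expect the only real work to be bookkeeping: pinning down the variances in the naturality square so that (\ref{equ:newchain}) is recognised as a direct system isomorphic to (\ref{equ:origchain}) rather than to its reverse, and matching the index conventions for the $H$-sets of \citep{J} against the shifts occurring here. Conceptually nothing is needed beyond the $2$-Calabi--Yau identity and the morphism computations already carried out in \citep{J}.
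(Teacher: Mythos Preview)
Your proposal is correct and matches the paper's approach: both parts rest on the $2$-Calabi--Yau Serre duality for the natural isomorphism of direct systems, and on \citep[Proposition~2.2]{J} for the dimension formula. The paper routes the naturality through the double-dual $V\cong V^{\ast\ast}$ rather than invoking naturality of $\eta_{Y,-}$ directly, and it cites \citep[Proposition~2.2]{J} in its $\Hom$-form $(A,B)\cong k \Leftrightarrow B\in H(\Sigma A)$ without the detour through $\Ext^1$ --- so your anticipated ``just a citation'' is exactly what happens.
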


\begin{proof}
Consider an arbitrary map $\xymatrix{(Y,\Sigma^{n-i}X_i) \ar[r]^-{(Y,\xi_i)} & (Y,\Sigma^{n-(i+1)}X_{i+1})}$ in the direct system (\ref{equ:origchain}). Then this is a morphism of vector spaces over $k$, and there is a commutative diagram
\begin{equation} \begin{aligned} \label{equ:natism}
\xymatrix{ (Y,\Sigma^{n-i}X_i) \ar[rr]^-{(Y,\xi_i)} \ar@{->}[d]^-{\cong} & & (Y,\Sigma^{n-(i+1)}X_{i+1}) \ar@{->}[d]^-{\cong} \\
(Y,\Sigma^{n-i}X_i)^{\ast \ast} \ar[rr]^-{(Y,\xi_i)^{\ast \ast}} && (Y,\Sigma^{n-(i+1)}X_{i+1})^{\ast \ast} }
\end{aligned} \end{equation}
where $^\ast$ is the contravariant functor from the category of $k$-vector spaces to itself, $^\ast : {\operatorname{\textsf Vect}}_k \rightarrow {\operatorname{\textsf Vect}}_k$, which maps a vector space over $k$ to its dual space and maps a linear map to its transpose. We can rewrite the bottom morphism of (\ref{equ:natism}) as
\begin{equation} \label{equ:dual} \xymatrix{ ((Y,\Sigma^{n-i}X_i)^\ast)^\ast \ ( && )^\ast \ ((Y,\Sigma^{n-(i+1)}X_{i+1})^{\ast})^\ast \ar[ll]_-{(Y,\xi_i)^{\ast}} }.
\end{equation}
By \citep[Remark 1.2]{J}, the category $\cD$ is 2-Calabi-Yau and hence Serre duality states that $(Y,\Sigma^{n-i}X_i) \cong (\Sigma^{n-i}X_i,\Sigma^2 Y)^\ast.$ Now, functors preserve isomorphisms, so we can apply $^\ast$ to this to obtain $(Y,\Sigma^{n-i}X_i)^\ast \cong (\Sigma^{n-i}X_i,\Sigma^2 Y)^{\ast \ast}$, which is naturally isomorphic to $(\Sigma^{n-i}X_i,\Sigma^2 Y)$. Therefore, (\ref{equ:dual}) is naturally isomorphic to
\begin{equation} \label{equ:dual2} \xymatrix{ (\Sigma^{n-i}X_i,\Sigma^2 Y)^\ast \ar[rr]^-{(\xi_i,\Sigma^2 Y)^{\ast}} && (\Sigma^{n-(i+1)}X_{i+1},\Sigma^2 Y)^\ast  }.
\end{equation}
Combining (\ref{equ:dual2}) with (\ref{equ:natism}) shows that
$$\xymatrix{(Y,\Sigma^{n-i}X_i) \ar[r]^-{(Y,\xi_i)} & (Y,\Sigma^{n-(i+1)}X_{i+1})}$$
is naturally isomorphic to
$$\xymatrix{ (\Sigma^{n-i}X_i,\Sigma^2 Y)^\ast \ar[rr]^-{(\xi_i,\Sigma^2 Y)^{\ast}} && (\Sigma^{n-(i+1)}X_{i+1},\Sigma^2 Y)^\ast  },$$
and hence the direct system (\ref{equ:origchain}) is naturally isomorphic to the direct system (\ref{equ:newchain})
$$
\xymatrix{
(\Sigma^nX_0,\Sigma^2Y)^\ast \ar[rr]^-{(\xi_0,\Sigma^2Y)^\ast} &&   (\Sigma^{n-1}X_1,\Sigma^2Y)^\ast \ar[rr]^-{(\xi_1,\Sigma^2Y)^\ast} &&  (\Sigma^{n-2}X_2,\Sigma^2Y)^\ast \ar[r] &  \cdots,
}
$$
as required. We now prove claim (\ref{equ:kor0}) of the lemma. By \citep[Proposition 2.2]{J}, we have that
$$(\Sigma^{n-i}X_{i},\Sigma^2Y) \cong \Big \{ \begin{matrix} k \hspace{1.5 mm} \textrm{if} \hspace{1.5 mm} \Sigma^2Y \in H(\Sigma^{n-i+1}X_i), \\ 0 \hspace{1.5 mm} \textrm{if} \hspace{1.5 mm} \Sigma^2Y \not\in H(\Sigma^{n-i+1}X_i). \end{matrix} $$
This directly implies that
$$ (\Sigma^{n-i}X_{i},\Sigma^2Y)^\ast \cong \Big \{ \begin{matrix} k \hspace{1.5 mm} \textrm{if} \hspace{1.5 mm} \Sigma^2Y \in H(\Sigma^{n-i+1}X_i), \\ 0 \hspace{1.5 mm} \textrm{if} \hspace{1.5 mm} \Sigma^2Y \not\in H(\Sigma^{n-i+1}X_i), \end{matrix} $$
because a vector space which is finite-dimensional has the same dimension as its dual.
\end{proof}

\begin{Lemma}\label{lem:colim}
The colimit of the direct system (\ref{equ:origchain}),
$$
\xymatrix{
(Y,\Sigma^nX_0) \ar[r]^-{(Y,\xi_0)} & (Y,\Sigma^{n-1}X_1) \ar[r]^-{(Y,\xi_1)} & (Y,\Sigma^{n-2}X_2) \ar[r]& \cdots,
}
$$
is isomorphic to $k$ if $Y \in \BW(\Sigma^nX_0),$ and $0$ if $Y \not\in \BW(\Sigma^nX_0).$
\end{Lemma}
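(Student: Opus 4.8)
The plan is to run the computation through Lemma~\ref{lem:lemmadual}: in place of the system (\ref{equ:origchain}) I would work with the naturally isomorphic system (\ref{equ:newchain}), each of whose terms is either $k$ or $0$ by (\ref{equ:kor0}). The whole problem then becomes deciding whether the colimit of a direct system of ``$k$'s and $0$'s'' is $k$ or $0$. I would begin with the elementary observation that a filtered colimit of $k$-vector spaces of dimension $\le 1$ has dimension $\le 1$ (any two elements of the colimit come from a common term, which is at most one-dimensional); hence $\colim(\ref{equ:origchain})$ is automatically $0$ or $k$, and only the value is in doubt.

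Next I would pin down the eventual behaviour of the terms $(\Sigma^{n-i}X_i,\Sigma^2 Y)^{\ast}$. The object $\Sigma^{n-i+1}X_i$ occurring in (\ref{equ:kor0}) has its two parameters summing to the constant $n+1$, so a short computation with the inequalities defining $H^{-}$ and $H^{+}$ in Notation~\ref{not:Hplusminus} shows that $H^{+}(\Sigma^{n-i+1}X_i)$ recedes to infinity as $i\to\infty$, whereas the family $H^{-}(\Sigma^{n-i+1}X_i)$ is increasing, with union exactly $\Sigma^{2}\BW(\Sigma^{n}X_0)$, for $\BW$ the wedge of Definition~\ref{def:wedge}. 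Therefore there is an index $i_0 = i_0(Y)$ such that, for all $i\ge i_0$, the term $(\Sigma^{n-i}X_i,\Sigma^2 Y)^{\ast}$ equals $k$ when $Y\in\BW(\Sigma^{n}X_0)$ and equals $0$ when $Y\notin\BW(\Sigma^{n}X_0)$. This already disposes of the second case: the system (\ref{equ:newchain}) is then eventually zero, so its colimit is $0$.

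The substantive point, which I expect to be the real obstacle, is that when $Y\in\BW(\Sigma^{n}X_0)$ the transition maps $(\xi_i,\Sigma^2 Y)^{\ast}\colon k\to k$ are nonzero — hence isomorphisms — for all large $i$; without this a one-dimensional direct system could still have colimit $0$. Dualising, it suffices that precomposition $(\xi_i,\Sigma^2 Y)\colon(\Sigma^{n-i-1}X_{i+1},\Sigma^2 Y)\to(\Sigma^{n-i}X_i,\Sigma^2 Y)$ be injective, and I would obtain this by identifying the cone of $\xi_i$. Factoring the map $\cdot\,T^{i+2}$ defining $\Sigma^{n-i-1}X_{i+1}$ as $(\cdot\,T)\circ(\cdot\,T^{i+1})$ and applying the octahedral axiom — the three cones in play being $\Sigma^{n-i}X_i$, $\Sigma^{n-i-1}X_{i+1}$ and $\Sigma^{n-i-1}X_0$ — yields a distinguished triangle
$$\xymatrix{\Sigma^{n-i}X_i \ar[r]^-{\xi_i} & \Sigma^{n-i-1}X_{i+1} \ar[r] & \Sigma^{n-i-1}X_0 \ar[r] & \Sigma^{n-i+1}X_i,}$$
in which the first map is forced to be a scalar multiple of $\xi_i$ since $\Sigma^{n-i-1}X_{i+1}$ is indecomposable and $(\Sigma^{n-i}X_i,\Sigma^{n-i-1}X_{i+1})$ is one-dimensional (neighbouring objects on a slice). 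Applying $(-,\Sigma^2 Y)$ to this triangle, injectivity of $(\xi_i,\Sigma^2 Y)$ follows as soon as $(\Sigma^{n-i-1}X_0,\Sigma^2 Y)=0$, and this vanishing is one more instance of the combinatorial check above: by \citep[Proposition 2.2]{J} the fixed object $\Sigma^2 Y$ lies outside $H(\Sigma^{n-i}X_0)$ once $i$ is large, the baseline object $\Sigma^{n-i}X_0$ drifting off to the right. Enlarging $i_0$ so that all these conditions hold simultaneously for $i\ge i_0$, the tail of (\ref{equ:newchain}) reads $k\xrightarrow{\ \sim\ }k\xrightarrow{\ \sim\ }\cdots$, so its colimit, and hence that of (\ref{equ:origchain}), is $k$. (Alternatively $Y$ is compact in $\sD(R)$, so this colimit is $(Y,\sE_n)$; but computing $(Y,\sE_n)$ is exactly what the present lemma is meant to feed into, so the self-contained argument is preferable.)
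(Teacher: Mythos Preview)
Your argument is correct, and its overall architecture matches the paper's: reduce via Lemma~\ref{lem:lemmadual} to the system (\ref{equ:newchain}), observe that its terms are eventually $0$ when $Y\notin\BW(\Sigma^nX_0)$ and eventually $k$ when $Y\in\BW(\Sigma^nX_0)$, and then in the latter case show that the transition maps $k\to k$ are eventually nonzero.

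Where you genuinely diverge from the paper is in that last step. The paper translates back to the original system (\ref{equ:origchain}) and invokes \cite[Lemma~2.5]{J}, a combinatorial non-vanishing result for compositions in $\cD$: one checks that $\Sigma^{n-i}X_i$ and $\Sigma^{n-i-1}X_{i+1}$ lie in $H^+(\Sigma Y)$ and that $\Sigma^{n-i-1}X_{i+1}\in H^+(\Sigma^{n-i+1}X_i)$, whence the composite $Y\to\Sigma^{n-i}X_i\xrightarrow{\xi_i}\Sigma^{n-i-1}X_{i+1}$ is nonzero. Your route is homological rather than combinatorial: you identify the cone of $\xi_i$ via the octahedron for $T^{i+2}=T\cdot T^{i+1}$, obtaining the triangle $\Sigma^{n-i}X_i\to\Sigma^{n-i-1}X_{i+1}\to\Sigma^{n-i-1}X_0\to$, and then read off injectivity of $(\xi_i,\Sigma^2Y)$ from the long exact sequence once $(\Sigma^{n-i-1}X_0,\Sigma^2Y)=0$, which holds for all large $i$ since the baseline object drifts off. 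This buys you independence from the somewhat delicate bookkeeping of \cite[Lemma~2.5]{J}, at the cost of one appeal to the octahedral axiom; the paper's approach stays entirely within the combinatorics of the $H^\pm$ regions already set up in \cite{J}. One small point worth making explicit in your write-up: the first map in your octahedral triangle is \emph{nonzero} (else the triangle would split, contradicting indecomposability of $\Sigma^{n-i-1}X_{i+1}$), and only then does one-dimensionality force it to be a nonzero scalar multiple of $\xi_i$.
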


\begin{proof}
Using Lemma \ref{lem:lemmadual}, we can rewrite the direct system as
$$
\xymatrix{
(\Sigma^nX_0,\Sigma^2Y)^\ast \ar[rr]^-{(\xi_0,\Sigma^2Y)^\ast} &&   (\Sigma^{n-1}X_1,\Sigma^2Y)^\ast \ar[rr]^-{(\xi_1,\Sigma^2Y)^\ast} &&  (\Sigma^{n-2}X_2,\Sigma^2Y)^\ast \ar[r] &  \cdots,
}
$$
from Equation (\ref{equ:newchain}). Now consider the three numbered regions in the Auslander-Reiten quiver below.
\[ \def\objectstyle{\scriptstyle}
  \xymatrix @-2.9pc @! {
    & \vdots \ar@{~>}[dr] & & \vdots \ar@{~>}[dr] & & \vdots \ar@{~>}[dr] & & \vdots \ar@{~>}[dr] & & \vdots \ar@{.>}[dr] & & \vdots \ar@{.>}[dr] & & \vdots & \\
    \cdots \ar@{.>}[dr]& & \circ \ar@{~>}[ur] \ar@{~>}[dr] & & \circ \ar@{~>}[ur] \ar@{~>}[dr] &  & \circ \ar@{~>}[ur] \ar@{~>}[dr] & & \circ \ar@{~>}[ur] \ar@{.>}[dr] & & \circ \ar@{.>}[ur] \ar@{.>}[dr] & & \circ \ar@{-->}[ur] \ar@{-->}[dr] & & \cdots \\
    & \circ \ar@{.>}[ur] \ar@{.>}[dr] &  & \circ \ar@{~>}[ur] \ar@{~>}[dr] &  & \circ \ar@{~>}[ur] \ar@{~>}[dr] & & \circ \ar@{~>}[ur] \ar@{.>}[dr] &  & \circ \ar@{.>}[ur] \ar@{.>}[dr] &  & \circ \ar@{-->}[ur] \ar@{-->}[dr] &  & \circ \ar@{-->}[ur] \ar@{-->}[dr] & \\
    \cdots \ar@{.>}[ur]\ar@{.>}[dr]& & \circ \ar@{.>}[ur] \ar@{.>}[dr] & *+[o][F]{1} & \circ \ar@{~>}[ur] \ar@{~>}[dr] & *+[o][F]{3} & \circ \ar@{~>}[ur] \ar@{.>}[dr] & *+[o][F]{1} & \circ \ar@{.>}[ur] \ar@{.>}[dr] & & \circ \ar@{-->}[ur] \ar@{-->}[dr] & *+[o][F]{2} & \circ \ar@{-->}[ur] \ar@{-->}[dr] & & \cdots\\
    & \circ \ar@{.>}[ur] &  & \circ \ar@{.>}[ur] & & \Sigma^{n+2}X_0 \ar@{~>}[ur] & & \Sigma^{n+1}X_0 \ar@{.>}[ur] & & \Sigma^{n}X_0 \ar@{-->}[ur] & & \circ \ar@{-->}[ur] & & \circ \ar@{-->}[ur] & \\
               }
\]
Region $\def\objectstyle{\scriptstyle}\xymatrix{*+[o][F]{3}}$ is highlighted with wavy arrows and contains all of the vertices on its boundary, so, in particular, equals $\BW(\Sigma^{n+2}X_0).$ Region $\def\objectstyle{\scriptstyle}\xymatrix{*+[o][F]{2}}$ is highlighted with dashed arrows and contains all of the vertices on its boundary. Region $\def\objectstyle{\scriptstyle}\xymatrix{*+[o][F]{1}}$ contains none of the vertices which have wavy or dashed arrows coming in or going out.
Recall the following useful fact from Lemma \ref{lem:lemmadual}.
$$(\Sigma^{n-i}X_{i},\Sigma^2Y)^\ast \cong \Big \{ \begin{matrix} k \hspace{1.5 mm} \textrm{if} \hspace{1.5 mm} \Sigma^2Y \in H(\Sigma^{n-i+1}X_i), \\ 0 \hspace{1.5 mm} \textrm{if} \hspace{1.5 mm} \Sigma^2Y \not\in H(\Sigma^{n-i+1}X_i). \end{matrix}$$
If the indecomposable object $\sig Y$ is located in region $\def\objectstyle{\scriptstyle}\xymatrix{*+[o][F]{1}}$, then $\sig Y$ is never in $H(\Sigma^{n-i+1}X_i)$, for any $i \in \BN_0$. If $\sig Y$ is located in region $\def\objectstyle{\scriptstyle}\xymatrix{*+[o][F]{2}}$, then $\sig Y$ will be in $H(\Sigma^{n-i+1}X_i)$ for only finitely many values of $i$. This is because, once $i$ is large enough, $\sig Y$ will be to the left of $H^{+}(\Sigma^{n-i+1}X_i)$. Therefore, if $\sig Y$ is located in regions $\def\objectstyle{\scriptstyle}\xymatrix{*+[o][F]{1}}$ or $\def\objectstyle{\scriptstyle}\xymatrix{*+[o][F]{2}}$, the categorical colimit of the direct system (\ref{equ:newchain}) is trivially zero. What remains is to check what happens when $\sig Y$ is in region $\def\objectstyle{\scriptstyle}\xymatrix{*+[o][F]{3}}$. Note that this means $\sig Y \in \BW(\Sigma^{n+2}X_0),$ or equivalently, $Y \in \BW(\Sigma^{n}X_0).$

Suppose $\sig Y$ is in region $\def\objectstyle{\scriptstyle}\xymatrix{*+[o][F]{3}}$. Then there exists $N \in \BN_0$ such that whenever $i\geq N$, $\sig Y \in H^{-}(\Sigma^{n-i+1}X_i) \subset H(\Sigma^{n-i+1}X_i)$. Hence, the direct system (\ref{equ:newchain}) looks like
$$ 0 \rightarrow 0 \rightarrow \dots \rightarrow 0 \rightarrow k \rightarrow k \rightarrow \cdots$$
with at most $N$ zeroes. We claim that the colimit of this system is $k$. Consider one of the maps in the direct system (\ref{equ:newchain}) between two of the nonzero hom spaces. Denote this map
\begin{equation}\label{equ:nonzeromap}
\xymatrix{
(\Sigma^{n-i}X_i,\Sigma^2Y)^\ast \ar[rr]^-{(\xi_i,\Sigma^2Y)^\ast}  &&  (\Sigma^{n-(i+1)}X_{i+1},\Sigma^2Y)^\ast
}
\end{equation}
for $i \geq N$, so that $\sig Y \in H^{-}(\Sigma^{n-i+1}X_i).$ It is enough to show that such a map is nonzero, because if that is the case, then it is an isomorphism; this is due to the fact that the hom spaces are one-dimensional. Observe that $\sig Y \in H^{-}(\Sigma^{n-i+1}X_i)$ is equivalent to requiring that 
\begin{equation}\label{equ:observation} Y \in H^{-}(\Sigma^{n-(i+1)}X_i). \end{equation}
By Lemma \ref{lem:lemmadual}, (\ref{equ:nonzeromap}) is naturally isomorphic to 
\begin{equation}\label{equ:nonzeromapism}
\xymatrix{ (Y,\Sigma^{n-i}X_i) \ar[rr]^-{(Y,\xi_i)} && (Y,\Sigma^{n-(i+1)}X_{i+1}).}
\end{equation}
Let $\varphi \in (Y,\Sigma^{n-i}X_i)$ be nonzero. Then the image of $\varphi$ under $(Y,\xi_i)$ is nonzero, by virtue of Lemma 2.5 of \citep{J}. This is because $Y,$ $\Sigma^{n-i}X_i$ and $\Sigma^{n-i-1}X_{i+1}$ are indecomposable objects of $\cD$  such that $\Sigma^{n-i}X_i, \Sigma^{n-i-1}X_{i+1} \in H^{+}(\Sigma Y)$ and $\Sigma^{n-i-1}X_{i+1} \in H^{+}(\Sigma^{n-i+1}X_i)$, as shown in the following sketch.
\[ \def\objectstyle{\scriptstyle}
  \xymatrix @-4.6pc @! {
    & & & & & & & & \\
    & & & & & *{} \ar@{--}[ur] &  & *{} \ar@{~~}[ur] & *{} \\
    & & & & & & & & & & \\
    & & & & & \Sigma^{n-i-1}X_{i+1} \ar@{~}[uurr] & &  &  & *{}\ar@{--}[ur] & & & \\
    & & Y \ar@{-}[dddrrr] \ar@{-}[uuurrr] & & \Sigma^{n-i}X_{i} \ar@{..}[dddlll] \ar@{..}[uuulll] \ar@{~}[dddrrr] \ar@{->}[ur]_-{\xi_i} & & & & & *{} & *{}\ar@{~~}[ur]\\
    *{} & & & & &&&&&\\ 
    & & & & & & & *{} \\
    *{} \ar@{--}[r] & *{} \ar@{..}[ul] \ar@{-}[rrr] & & \ar@{-}[rrrrrrrr] & & *{} \ar@{-}[uuuurrrr] & &*{}\ar@{~}[uuurrr]&&&&*{}\ar@{--}[r]&*{}\\
           }
\]
The region $H^{-}(\Sigma^{n-(i+1)}X_i)$ is illustrated by dotted lines; this is where $Y$ is located. Notice that wherever $Y$ is in this region, $H^{+}(\Sigma Y)$ contains $\xi_i : \Sigma^{n-i}X_i \rightarrow \Sigma^{n-i-1}X_{i+1}$. The wavy lines illustrate the region $H^{+}(\Sigma^{n-i+1}X_i)$. Lemma 2.5 of \citep{J} tells us that in this situation, the composition of nonzero morphisms $Y \rightarrow \Sigma^{n-i}X_i$ and $\Sigma^{n-i}X_i \rightarrow \Sigma^{n-i-1}X_{i+1}$ is nonzero. We have therefore shown that the morphism (\ref{equ:nonzeromap}) is nonzero when $\Sigma^2 Y \in \BW(\Sigma^{n+2}X_0)$ and $i \geq N,$ so that $\sig Y \in H^{-}(\Sigma^{n-i+1}X_i)$ holds. Hence, the morphisms between nonzero hom spaces in the direct system (\ref{equ:newchain}) are isomorphisms, and the colimit of this direct system is isomorphic to $k$ whenever $\Sigma^2 Y$ is in region $\def\objectstyle{\scriptstyle}\xymatrix{*+[o][F]{3}}$. Now, the direct system (\ref{equ:newchain}),
$$\xymatrix{
(\Sigma^nX_0,\Sigma^2Y)^\ast \ar[rr]^-{(\xi_0,\Sigma^2Y)^\ast} &&   (\Sigma^{n-1}X_1,\Sigma^2Y)^\ast \ar[rr]^-{(\xi_1,\Sigma^2Y)^\ast} &&  (\Sigma^{n-2}X_2,\Sigma^2Y)^\ast \ar[r] &  \cdots,
}$$
and the direct system (\ref{equ:origchain}),
$$
\xymatrix{
(Y,\Sigma^nX_0) \ar[r]^-{(Y,\xi_0)} & (Y,\Sigma^{n-1}X_1) \ar[r]^-{(Y,\xi_1)} & (Y,\Sigma^{n-2}X_2) \ar[r]& \cdots,
}
$$
are naturally isomorphic to one another, and so the colimit of (\ref{equ:origchain}) is isomorphic to $k$ if $Y \in \BW(\Sigma^nX_0),$ and $0$ if $Y \not\in \BW(\Sigma^nX_0),$ as required.
\end{proof}

\begin{Proposition}\label{pro:homsin}
Consider the direct system
$$ \Sigma^nX_0 \rightarrow \Sigma^{n-1}X_1 \rightarrow \cdots \rightarrow \Sigma^{n-i}X_i \rightarrow \cdots$$
associated with a slice in the Auslander-Reiten quiver of $\cD$. If $Y \in \ind\cD$ is any indecomposable object, then
$$\hmD(Y,\hocolim_i(\Sigma^{n-i}X_i)) \cong \Big \{ \begin{matrix} k \hspace{1.5 mm} \textrm{if} \hspace{2 mm} Y \in \BW(\Sigma^{n}X_0), \\ 0 \hspace{1.5 mm} \textrm{if} \hspace{2 mm} Y \not\in \BW(\Sigma^{n}X_0). \end{matrix} $$
\end{Proposition}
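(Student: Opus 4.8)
The plan is to reduce $\hmD(Y,\hocolim_i(\Sigma^{n-i}X_i))$ to the categorical colimit already computed in Lemma~\ref{lem:colim}, by commuting the functor $\hmD(Y,-)$ past the homotopy colimit. The observation that makes this legitimate is that $Y$ is a \emph{compact} object of $\sD(R)$: the free module $R$ is compact, each $X_n$ sits in the distinguished triangle $\Sigma^{n+1}R \to R \to X_n$ of \citep[Remark~1.3]{J} and hence lies in the thick subcategory generated by $R$, so every shift $\Sigma^{j}X_{m}$ — in particular $Y\in\ind\cD$ — is compact.

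First I would note that $\hmD$ may be replaced by $\Hom_{\sD(R)}$ throughout: by construction $\Dbar$ is a full subcategory of $\sD(R)$ (Theorem~\ref{cor:Distri} only adds that it is triangulated), and both $Y$ and $\sE_n=\hocolim_i(\Sigma^{n-i}X_i)$ lie in $\Dbar$, so $\hmD(Y,\sE_n)=\Hom_{\sD(R)}(Y,\sE_n)$.

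Next I would use compactness. The homotopy colimit fits in the defining distinguished triangle
$$\bigoplus_i \Sigma^{n-i}X_i \xrightarrow{\ \mathrm{id}-\xi\ } \bigoplus_i \Sigma^{n-i}X_i \longrightarrow \hocolim_i(\Sigma^{n-i}X_i) \longrightarrow \Sigma\bigoplus_i \Sigma^{n-i}X_i ,$$
where $\xi$ is the map induced by the $\xi_i$ (see page~209 of \citep{BN}). Applying the homological functor $\Hom_{\sD(R)}(Y,-)$ and using that compactness of $Y$ gives $\Hom_{\sD(R)}(Y,\bigoplus_i(-))=\bigoplus_i\Hom_{\sD(R)}(Y,-)$, the connecting map becomes $\mathrm{id}-(Y,\xi_i)$ on $\bigoplus_i\Hom_{\sD(R)}(Y,\Sigma^{n-i}X_i)$, which is injective; hence the long exact sequence collapses and identifies $\Hom_{\sD(R)}(Y,\hocolim_i(\Sigma^{n-i}X_i))$ with the cokernel of that map, i.e. with $\colim_i\Hom_{\sD(R)}(Y,\Sigma^{n-i}X_i)$ taken along the transition maps $(Y,\xi_i)$. (This is exactly the use of \citep[Lemma~2.8]{N} already made in the proof of Lemma~\ref{lem:functoranddual1}(2), now applied to the coproduct-preserving homological functor $\Hom_{\sD(R)}(Y,-)$.) But this colimit is precisely the colimit of the direct system~(\ref{equ:origchain}), which by Lemma~\ref{lem:colim} is $k$ if $Y\in\BW(\Sigma^{n}X_0)$ and $0$ otherwise; combined with the previous paragraph this is the assertion.

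I expect the only real work to be in the last step: setting up the homotopy-colimit triangle so that the cokernel occurring in the long exact sequence genuinely is $\colim_i\Hom_{\sD(R)}(Y,\Sigma^{n-i}X_i)$ with transition maps $(Y,\xi_i)$, and confirming compactness of $Y$ as above. All of the geometry — which vertices of $\BZ A_\infty$ actually contribute — has already been carried out in Lemma~\ref{lem:colim}.
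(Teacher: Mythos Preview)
Your proposal is correct and follows essentially the same route as the paper: establish compactness of $Y$ via the distinguished triangle $\Sigma^{n+1}R\to R\to X_n$, invoke \citep[Lemma~2.8]{N} to get $(Y,\hocolim_i(\Sigma^{n-i}X_i))\cong\colim_i(Y,\Sigma^{n-i}X_i)$, and then read off the answer from Lemma~\ref{lem:colim}. Your version simply unpacks the content of Neeman's lemma (the defining triangle of the hocolimit and the injectivity of $\mathrm{id}-\text{shift}$) in slightly more detail than the paper does.
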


\begin{proof}
The indecomposable objects of $\cD$ are compact in $\sD(R)$. This is because $R$ and its associated shifts are compact, and the distinguished triangle (\ref{equ:inddistri}) gives that $X_n$ is compact. Hence by \citep[Lemma 2.8]{N}, we have that
\begin{equation}\label{hocolimcong}(Y, \hocolim_i(\Sigma^{n-i}X_i)) \cong \colim_i(Y, \Sigma^{n-i}X_i).\end{equation}
This is isomorphic to $k$ if $Y \in \BW(\Sigma^nX_0),$ and $0$ if $Y \not\in \BW(\Sigma^nX_0)$ by Lemma \ref{lem:colim}.
\end{proof}

\begin{Corollary} \label{cor:homsout}
Consider the direct system
$$ \Sigma^nX_0 \rightarrow \Sigma^{n-1}X_1 \rightarrow \cdots \rightarrow \Sigma^{n-i}X_i \rightarrow \cdots$$
associated with a slice in the Auslander-Reiten quiver of $\cD$. If $Y \in \ind\cD$ is any indecomposable object, then
$$\hmD(\hocolim_i(\Sigma^{n-i}X_i),Y) \cong \Big \{ \begin{matrix} k \hspace{1.5 mm} \textrm{if} \hspace{2 mm} Y \in \BW(\Sigma^{n+2}X_0), \\ 0 \hspace{1.5 mm} \textrm{if} \hspace{2 mm} Y \not\in \BW(\Sigma^{n+2}X_0). \end{matrix} $$
\end{Corollary}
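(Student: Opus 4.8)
The plan is to compute $\hmD(\sE_{n},Y)$ directly inside $\sD(R)$ --- which is legitimate since $\Dbar$ is a full subcategory of $\sD(R)$, so $\hmD(\sE_{n},Y)=\Hom_{\sD(R)}(\sE_{n},Y)$, and moreover $\Hom_{\sD(R)}(Z,Y)=\Hom_{\cD}(Z,Y)$ for $Z\in\cD$ --- by feeding the defining triangle of the homotopy colimit into the Milnor ${\varprojlim}^{1}$-sequence and then recognising the resulting inverse limit as the $k$-dual of a direct system already evaluated in Lemma \ref{lem:colim}.

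First I would write the slice direct system as $\Sigma^{n}X_{0}\xrightarrow{\xi_{0}}\Sigma^{n-1}X_{1}\xrightarrow{\xi_{1}}\cdots$ and recall from \citep{BN} that $\sE_{n}=\hocolim_{i}(\Sigma^{n-i}X_{i})$ fits into a distinguished triangle
$$\coprod_{i\ge 0}\Sigma^{n-i}X_{i}\longrightarrow\coprod_{i\ge 0}\Sigma^{n-i}X_{i}\longrightarrow\sE_{n}\longrightarrow\Sigma\coprod_{i\ge 0}\Sigma^{n-i}X_{i}$$
whose first map is the identity minus the obvious shift built from the $\xi_{i}$. Applying $\Hom_{\sD(R)}(-,Y)$ and using $\Hom_{\sD(R)}(\coprod_{i}A_{i},Y)\cong\prod_{i}\Hom_{\sD(R)}(A_{i},Y)$, the long exact sequence of this triangle yields the Milnor short exact sequence
$$0\longrightarrow{\varprojlim_{i}}^{1}\Hom_{\cD}(\Sigma^{n-i}X_{i},\Sigma^{-1}Y)\longrightarrow\hmD(\sE_{n},Y)\longrightarrow\varprojlim_{i}\Hom_{\cD}(\Sigma^{n-i}X_{i},Y)\longrightarrow 0,$$
in which both towers have transition maps induced by the $\xi_{i}$. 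Since $\cD$ is Hom-finite, every term of either tower is finite-dimensional, so the images of the iterated transition maps form a descending chain of subspaces of a finite-dimensional space and hence stabilise; both towers are therefore Mittag--Leffler and the ${\varprojlim}^{1}$-term vanishes. Thus $\hmD(\sE_{n},Y)\cong\varprojlim_{i}\Hom_{\cD}(\Sigma^{n-i}X_{i},Y)$.

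To evaluate this inverse limit I would invoke the $2$-Calabi--Yau property of $\cD$ (\citep[Remark 1.2]{J}): as $\Sigma^{n-i}X_{i}$ and $Y$ are indecomposables of $\cD$, Serre duality gives natural isomorphisms $\Hom_{\cD}(\Sigma^{n-i}X_{i},Y)\cong\Hom_{\cD}(Y,\Sigma^{(n+2)-i}X_{i})^{\ast}$, and naturality in the first variable (applied to the maps $\xi_{i}$) identifies the tower $\{\Hom_{\cD}(\Sigma^{n-i}X_{i},Y)\}_{i}$ with the $k$-dual of the direct system $\{\Hom_{\cD}(Y,\Sigma^{(n+2)-i}X_{i})\}_{i}$ attached, as in Lemma \ref{lem:colim}, to the slice beginning at $\Sigma^{n+2}X_{0}$. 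As $k$-duality carries a colimit of finite-dimensional spaces to the limit of the duals, $\varprojlim_{i}\Hom_{\cD}(\Sigma^{n-i}X_{i},Y)\cong\big(\colim_{i}\Hom_{\cD}(Y,\Sigma^{(n+2)-i}X_{i})\big)^{\ast}$. By Lemma \ref{lem:colim} with $n$ replaced by $n+2$, this colimit is $k$ when $Y\in\BW(\Sigma^{n+2}X_{0})$ and $0$ otherwise; since the $k$-dual of $k$ is $k$ and the $k$-dual of $0$ is $0$, we conclude $\hmD(\sE_{n},Y)\cong k$ if $Y\in\BW(\Sigma^{n+2}X_{0})$ and $\hmD(\sE_{n},Y)\cong 0$ if $Y\notin\BW(\Sigma^{n+2}X_{0})$, which is exactly the assertion.

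I expect the only delicate point to be the bookkeeping in the middle step: keeping the shifts in the Milnor sequence straight (noting in particular that the ${\varprojlim}^{1}$-tower involves $\Sigma^{-1}Y$, which becomes irrelevant once Mittag--Leffler is invoked) and checking that the Serre-duality isomorphism really is compatible with the $\xi_{i}$-induced transition maps, so that the inverse tower is genuinely the $k$-dual of the direct system controlled by Lemma \ref{lem:colim}. Everything else is formal.
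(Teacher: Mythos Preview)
Your proof is correct and follows essentially the same route as the paper's: both use the Milnor $\varprojlim^{1}$-sequence for the hocolimit, kill the $\varprojlim^{1}$-term (you via Mittag--Leffler for finite-dimensional towers, the paper via the fact that each term is $0$ or $k$), and then identify the remaining inverse limit as the $k$-dual of the colimit already computed in Lemma~\ref{lem:colim} by Serre duality. The only cosmetic difference is that the paper computes $(\sE_{n},\Sigma^{2}Y)$ and shifts at the end, whereas you absorb the shift into the Serre-duality step and invoke Lemma~\ref{lem:colim} directly with $n$ replaced by $n+2$.
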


\begin{proof}
There is a short exact sequence
$$0 \rightarrow \textrm{lim}^1_i (\Sigma^{1+n-i}X_i,\sig Y) \rightarrow (\hocolim_i(\Sigma^{n-i}X_i),\sig Y) \rightarrow \textrm{lim}_i(\Sigma^{n-i}X_i,\sig Y) \rightarrow 0$$
by \citep[Lemma 1.13.1]{JSM}, where $\textrm{lim}^1$ is the first right-derived functor of $\operatorname{lim}$. Now, for each $i \in \BN_0,$ the space $(\Sigma^{1+n-i}X_i,\sig Y)$ is either isomorphic to $0$ or $k$, and therefore 
$${\lim}{}^1 _i(\Sigma^{1+n-i}X_i,\Sigma^2 Y) = 0,$$
by Exercise 3.5.2 of \citep{W}. Therefore,
\begin{equation}\label{equ:limiso}
(\hocolim_i(\Sigma^{n-i}X_i),\sig Y) \cong \textrm{lim}_i(\Sigma^{n-i}X_i,\sig Y).
\end{equation}

Now, apply the functor $(-,\Sigma^2 Y)$ to the direct system
$$
\xymatrix{
\Sigma^nX_0 \ar[r]^-{\xi_0} & \Sigma^{n-1}X_1 \ar[r]^-{\xi_1} & \Sigma^{n-2}X_2 \ar[r]& \cdots
}
$$
to obtain the inverse system
\begin{equation}\label{equ:inversechain}
\xymatrix{
(\Sigma^nX_0,\sig Y) && (\Sigma^{n-1}X_1,\sig Y) \ar[ll]_-{(\xi_0,\sig Y)} && (\Sigma^{n-2}X_2,\sig Y) \ar[ll]_-{(\xi_1,\sig Y)} & \cdots. \ar[l]
}
\end{equation}
This is dual to the direct system (\ref{equ:newchain}),
$$
\xymatrix{
(\Sigma^nX_0,\Sigma^2Y)^\ast \ar[rr]^-{(\xi_0,\Sigma^2Y)^\ast} &&   (\Sigma^{n-1}X_1,\Sigma^2Y)^\ast \ar[rr]^-{(\xi_1,\Sigma^2Y)^\ast} &&  (\Sigma^{n-2}X_2,\Sigma^2Y)^\ast \ar[r] &  \cdots,
}
$$
which is naturally isomorphic to the direct system (\ref{equ:origchain}), and so has colimit isomorphic to $k$ when $Y \in \BW(\Sigma^nX_0),$ and $0$ when $Y \not\in \BW(\Sigma^nX_0),$ by Lemma \ref{lem:colim}. Therefore, the limit of the inverse system (\ref{equ:inversechain}) is isomorphic to $k$ when $Y \in \BW(\Sigma^nX_0),$ and $0$ when $Y \not\in \BW(\Sigma^nX_0),$ because $(\colim_i(U_i))^\ast \cong \lim_i(U_i^\ast)$. By (\ref{equ:limiso}), we have that $\textrm{lim}(\Sigma^{n-i}X_i,\sig Y)$ is isomorphic to $(\hocolim_i(\Sigma^{n-i}X_i),\sig Y),$ so
$$(\hocolim_i(\Sigma^{n-i}X_i),\sig Y) \cong \Big \{ \begin{matrix} k \hspace{1.5 mm} \textrm{if} \hspace{1.5 mm} Y \in \BW(\Sigma^{n}X_0), \\ 0 \hspace{1.5 mm} \textrm{if} \hspace{1.5 mm} Y \not\in \BW(\Sigma^{n}X_0). \end{matrix}$$
A simple shift of vertices yields the result, namely that
$$(\hocolim_i(\Sigma^{n-i}X_i),Y) \cong \Big \{ \begin{matrix} k \hspace{1.5 mm} \textrm{if} \hspace{1.5 mm} Y \in \BW(\Sigma^{n+2}X_0), \\ 0 \hspace{1.5 mm} \textrm{if} \hspace{1.5 mm} Y \not\in \BW(\Sigma^{n+2}X_0). \end{matrix} $$
\end{proof}

\noindent After shifting vertices, Proposition \ref{pro:homsin} and Corollary \ref{cor:homsout} can be rewritten and summarised in the following way.
\begin{Theorem}\label{rmk:symmetry}
We have the following isomorphisms:
\begin{equation}\label{equ:homsinsym} (Y,\Sigma \sE_n) \cong \Big \{ \begin{matrix} k \hspace{1.5 mm} \textrm{if} \hspace{1.5 mm} Y \in \BW(\Sigma^{n+1}X_0), \\ 0 \hspace{1.5 mm} \textrm{if} \hspace{1.5 mm} Y \not\in \BW(\Sigma^{n+1}X_0) \ \end{matrix} 
\end{equation}
and
\begin{equation}\label{equ:homsoutsym} (\sE_n,\Sigma Y) \cong \Big \{ \begin{matrix} k \hspace{1.5 mm} \textrm{if} \hspace{1.5 mm} Y \in \BW(\Sigma^{n+1}X_0), \\ 0 \hspace{1.5 mm} \textrm{if} \hspace{1.5 mm} Y \not\in \BW(\Sigma^{n+1}X_0). \ \end{matrix} 
\end{equation} \demo
\end{Theorem}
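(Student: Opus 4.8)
The plan is to deduce both isomorphisms directly from Proposition \ref{pro:homsin} and Corollary \ref{cor:homsout} by a shift of indices, using two elementary bookkeeping facts. The first is that $\Sigma^{t}\sE_{n} = \sE_{n+t}$, recorded in the Remark following the definition of $\Dbar$; in particular $\Sigma\sE_{n} = \sE_{n+1} = \hocolim_{i}(\Sigma^{(n+1)-i}X_{i})$. The second is that suspension acts on wedges by $\Sigma\,\BW(\Sigma^{m}X_{0}) = \BW(\Sigma^{m+1}X_{0})$, which is immediate from the description $\BW(\Sigma^{m}X_{0}) = \{\Sigma^{m-j}X_{k} : 0 \leq j \leq k\}$ in Definition \ref{def:wedge}: applying $\Sigma$ to each element $\Sigma^{m-j}X_{k}$ produces $\Sigma^{m+1-j}X_{k}$, and as $(j,k)$ ranges over $0 \leq j \leq k$ these are precisely the elements of $\BW(\Sigma^{m+1}X_{0})$.

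For $(\ref{equ:homsinsym})$, I would apply Proposition \ref{pro:homsin} to the direct system associated with the slice starting at $\Sigma^{n+1}X_{0}$, i.e. with $n$ replaced throughout by $n+1$. Since the homotopy colimit of that system is $\sE_{n+1} = \Sigma\sE_{n}$, this gives $(Y,\Sigma\sE_{n}) \cong (Y,\sE_{n+1}) \cong k$ when $Y \in \BW(\Sigma^{n+1}X_{0})$ and $0$ otherwise, which is exactly $(\ref{equ:homsinsym})$.

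For $(\ref{equ:homsoutsym})$, I would instead apply Corollary \ref{cor:homsout} with its target indecomposable taken to be $\Sigma Y$ rather than $Y$; note that $\Sigma Y$ is again an indecomposable object of $\cD$, so the corollary applies verbatim and yields $(\sE_{n},\Sigma Y) \cong k$ precisely when $\Sigma Y \in \BW(\Sigma^{n+2}X_{0})$, and $0$ otherwise. It then remains to rewrite the condition: $\Sigma Y \in \BW(\Sigma^{n+2}X_{0})$ holds if and only if $Y \in \Sigma^{-1}\BW(\Sigma^{n+2}X_{0}) = \BW(\Sigma^{n+1}X_{0})$, by the equivariance fact above; this gives $(\ref{equ:homsoutsym})$.

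There is no genuine obstacle here; the only point requiring a little care is to keep the two indexing conventions straight, so that the single suspension correctly converts ``wedge based at $\Sigma^{n+2}X_{0}$'' into ``wedge based at $\Sigma^{n+1}X_{0}$''. It is worth observing the resulting symmetry: the one subcategory $\BW(\Sigma^{n+1}X_{0})$ simultaneously controls $(Y,\Sigma\sE_{n})$ and $(\sE_{n},\Sigma Y)$, which is the shadow one expects from the $2$-Calabi--Yau-type duality already exploited in Lemma \ref{lem:lemmadual}.
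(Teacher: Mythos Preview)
Your proposal is correct and is precisely the argument the paper intends: the theorem is stated in the paper as a direct reformulation of Proposition \ref{pro:homsin} and Corollary \ref{cor:homsout} ``after shifting vertices,'' with no further proof given. You have simply made that index shift explicit, via $\Sigma\sE_n=\sE_{n+1}$ for (\ref{equ:homsinsym}) and the equivariance $\Sigma^{-1}\BW(\Sigma^{n+2}X_0)=\BW(\Sigma^{n+1}X_0)$ for (\ref{equ:homsoutsym}).
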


\begin{Theorem}\label{pro:homsbetweenhocolims}
Consider the two direct systems
$$ \Sigma^mX_0 \rightarrow \Sigma^{m-1}X_1 \rightarrow \cdots \rightarrow \Sigma^{m-j}X_j \rightarrow \cdots$$
and
$$ \Sigma^nX_0 \rightarrow \Sigma^{n-1}X_1 \rightarrow \cdots \rightarrow \Sigma^{n-i}X_i \rightarrow \cdots$$
associated with slices in the Auslander-Reiten quiver of $\cD$, with respective hocolimits $\sE_m$ and $\sE_n$ in $\Dbar.$ Then
$$\hmD(\sE_m,\sE_n) \cong \Big \{ \begin{matrix} k \hspace{1.5 mm} \textrm{if} \hspace{2 mm} n \leq m, \\ 0 \hspace{1.5 mm} \textrm{if} \hspace{2 mm} n > m. \end{matrix} $$
\end{Theorem}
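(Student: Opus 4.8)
The plan is to reduce $\hmD(\sE_m,\sE_n)$ to a single inverse limit by a Milnor exact sequence, to compute that limit from Proposition \ref{pro:homsin}, and then to settle the one delicate point --- that the transition maps of the resulting inverse system are isomorphisms --- by the technique already used in the proof of Lemma \ref{lem:colim}.

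First I would write $\sE_m=\hocolim_j(\Sigma^{m-j}X_j)$ and apply the cohomological functor $\hmD(-,\sE_n)$ to the defining homotopy colimit triangle $\coprod_j\Sigma^{m-j}X_j\to\coprod_j\Sigma^{m-j}X_j\to\sE_m\to\Sigma\coprod_j\Sigma^{m-j}X_j$, which lives in $\sD(R)$; this is legitimate because $\Dbar$ is a full triangulated subcategory of $\sD(R)$ by Theorem \ref{cor:Distri}. Exactly as in the proof of Corollary \ref{cor:homsout}, and using that $\hmD(\coprod_jM_j,N)=\prod_j\hmD(M_j,N)$, one obtains a short exact sequence
\begin{equation*}
0\to\lim^{1}_{j}\,\hmD(\Sigma^{1+m-j}X_j,\sE_n)\to\hmD(\sE_m,\sE_n)\to\lim_j\hmD(\Sigma^{m-j}X_j,\sE_n)\to 0,
\end{equation*}
the inverse systems being formed along the maps $\xi_j$ in the slice direct system for $\sE_m$. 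By Proposition \ref{pro:homsin} every space occurring here is $0$ or $k$, hence finite-dimensional, so the $\lim^{1}$-term vanishes by Exercise 3.5.2 of \citep{W}, exactly as in Corollary \ref{cor:homsout}. Therefore $\hmD(\sE_m,\sE_n)\cong\lim_j\hmD(\Sigma^{m-j}X_j,\sE_n)$.

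Next I would identify this inverse system. A direct check with the definition of the wedge shows that $\Sigma^{m-j}X_j\in\BW(\Sigma^nX_0)$ if and only if $n\le m$ and $j\ge m-n$, so by Proposition \ref{pro:homsin}
\begin{equation*}
\hmD(\Sigma^{m-j}X_j,\sE_n)\cong\begin{cases}k&\text{if }n\le m\text{ and }j\ge m-n,\\0&\text{otherwise}.\end{cases}
\end{equation*}
If $n>m$ every term is $0$, so the limit is $0$ and the second case of the theorem follows. If $n\le m$ the system is, for $j\ge m-n$, a sequence of copies of $k$, and it remains to prove that the transition maps $(\xi_j,\sE_n)\colon\hmD(\Sigma^{m-j-1}X_{j+1},\sE_n)\to\hmD(\Sigma^{m-j}X_j,\sE_n)$ are isomorphisms for $j\ge m-n$; granting this, the limit is $k$ and the proof is complete.

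I expect this last step to be the main obstacle. To handle it I would use that each $\Sigma^{m-j}X_j$ is compact, so that $\hmD(\Sigma^{m-j}X_j,\sE_n)=\colim_i\hmD(\Sigma^{m-j}X_j,\Sigma^{n-i}X_i)$ as in Proposition \ref{pro:homsin}; arguing as in the proof of Lemma \ref{lem:colim}, this colimit is already attained at some finite stage $i$, at which the hom-spaces concerned are one-dimensional. It then suffices to show that at such a stage the map $(\xi_j,\Sigma^{n-i}X_i)\colon\hmD(\Sigma^{m-j-1}X_{j+1},\Sigma^{n-i}X_i)\to\hmD(\Sigma^{m-j}X_j,\Sigma^{n-i}X_i)$, which is precomposition in $\cD$ with the irreducible slice map $\xi_j$, is nonzero. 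Applying Serre duality (the $2$-Calabi--Yau property, used as in Lemma \ref{lem:lemmadual}) converts precomposition with $\xi_j$ into postcomposition with $\Sigma^{2}\xi_j$, again an irreducible slice map; the claim thus reduces to the statement that a composite of nonzero morphisms between appropriately positioned indecomposables of $\cD$ is nonzero --- Lemma 2.5 of \citep{J} --- whose $H^{\pm}$-membership hypotheses hold here precisely because $j\ge m-n$ and $i$ is large, the same bookkeeping as in the proof of Lemma \ref{lem:colim}. This yields $\lim_j\hmD(\Sigma^{m-j}X_j,\sE_n)\cong k$, hence $\hmD(\sE_m,\sE_n)\cong k$ for $n\le m$, finishing the proof.
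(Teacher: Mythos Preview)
Your proposal is correct and follows essentially the same route as the paper: Milnor sequence for the hocolimit, vanishing of $\lim^1$ from finite-dimensionality, identification of the inverse system via Proposition~\ref{pro:homsin}, and then nonvanishing of the transition maps by passing to a finite stage and invoking \cite[Lemma~2.5]{J}. The only difference is that your Serre-duality detour at the end is unnecessary---the paper applies \cite[Lemma~2.5]{J} directly to the precomposition map $(\eta_j,\Sigma^{n-l}X_l)$, since that lemma already gives nonvanishing of composites in $\cD$ under the relevant $H^{\pm}$ conditions.
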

 
\begin{proof}
By \citep[Lemma 1.13.1]{JSM}, there is a short exact sequence
$$0 \rightarrow \operatorname{lim}^1_j(\Sigma^{m-j}X_j,\sE_{n-1}) \rightarrow (\hocolim_j(\Sigma^{m-j}X_j),\sE_n) \rightarrow \textrm{lim}_j(\Sigma^{m-j}X_j,\sE_n) \rightarrow 0,$$
whose middle object is $\operatorname{Hom}_{\Dbar}(\sE_m,\sE_n)$. Now, for all $j \in \BN_0,$ $(\Sigma^{m-j}X_j,\sE_{n-1})$ is either isomorphic to $0$ or $k,$ by Proposition \ref{pro:homsin}. Therefore
$${\lim}{}^1_j\big\{(\Sigma^{m-j}X_j,\sE_{n-1} \big\} = 0,$$
by Exercise 3.5.2 of \citep{W}. Therefore,
\begin{equation}\label{equ:hocolimiso}
(\hocolim_j(\Sigma^{m-j}X_j),\sE_n) \cong \textrm{lim}_j(\Sigma^{m-j}X_j,\sE_n).
\end{equation}
Now, apply the functor $(-,\sE_n)$ to the direct system
\begin{equation}\label{equ:chainm}
\xymatrix{
\Sigma^mX_0 \ar[r]^-{\eta_0} & \Sigma^{m-1}X_1 \ar[r]^-{\eta_1} & \Sigma^{m-2}X_2 \ar[r]& \cdots
}
\end{equation}
to obtain the inverse system
\begin{equation}\label{equ:blankcommahocolim}
\xymatrix{
(\Sigma^mX_0,\sE_n) && (\Sigma^{m-1}X_1,\sE_n) \ar[ll]_-{(\eta_0,\sE_n)} && (\Sigma^{m-2}X_2,\sE_n) \ar[ll]_-{(\eta_1,\sE_n)} & \cdots. \ar[l]
}
\end{equation}
Now, two things can happen depending on where the slice associated with the direct system (\ref{equ:chainm}) starts. Consider the regions of the Auslander-Reiten quiver of $\cD$ illustrated below.
\[ \def\objectstyle{\scriptstyle}
  \xymatrix @-2.9pc @! {
    & \vdots \ar@{~>}[dr] & & \vdots \ar@{~>}[dr] & & \vdots \ar@{~>}[dr] & & \vdots \ar@{~>}[dr] & & \vdots \ar@{-->}[dr] & & \vdots \ar@{-->}[dr] & & \vdots & \\
    \cdots \ar@{.>}[dr]& & \circ \ar@{~>}[ur] \ar@{~>}[dr] & & \circ \ar@{~>}[ur] \ar@{~>}[dr] &  & \circ \ar@{~>}[ur] \ar@{~>}[dr] & & \circ \ar@{~>}[ur] \ar@{-->}[dr] & & \circ \ar@{-->}[ur] \ar@{-->}[dr] & & \circ \ar@{-->}[ur] \ar@{-->}[dr] & & \cdots \\
    & \circ \ar@{.>}[ur] \ar@{.>}[dr] &  & \circ \ar@{~>}[ur] \ar@{~>}[dr] &  & \circ \ar@{~>}[ur] \ar@{~>}[dr] & & \circ \ar@{~>}[ur] \ar@{-->}[dr] &  & \circ \ar@{-->}[ur] \ar@{-->}[dr] &  & \circ \ar@{-->}[ur] \ar@{-->}[dr] &  & \circ \ar@{-->}[ur] \ar@{-->}[dr] & \\
    \cdots \ar@{.>}[ur]\ar@{.>}[dr]& & \circ \ar@{.>}[ur] \ar@{.>}[dr] & *+[o][F]{1} & \circ \ar@{~>}[ur] \ar@{~>}[dr] & *+[o][F]{2} & \circ \ar@{~>}[ur] \ar@{-->}[dr] & *+[o][F]{3} & \circ \ar@{-->}[ur] \ar@{-->}[dr] & & \circ \ar@{-->}[ur] \ar@{-->}[dr] && \circ \ar@{-->}[ur] \ar@{-->}[dr] & & \cdots\\
    & \circ \ar@{.>}[ur] &  & \Sigma^{n+1}X_0 \ar@{.>}[ur] & & \Sigma^{n}X_0 \ar@{~>}[ur] & & \Sigma^{n-1}X_0 \ar@{-->}[ur] & & \circ \ar@{-->}[ur] & & \circ \ar@{-->}[ur] & & \circ \ar@{-->}[ur] & \\
               }
\]
Region $\def\objectstyle{\scriptstyle}\xymatrix{*+[o][F]{2}}$ is highlighted with wavy arrows and contains all of the vertices on its boundary, so, in particular, equals $\BW(\Sigma^{n}X_0).$ Neither region $\def\objectstyle{\scriptstyle}\xymatrix{*+[o][F]{1}},$ illustrated with dotted arrows, nor region $\def\objectstyle{\scriptstyle}\xymatrix{*+[o][F]{3}},$ illustrated with dashed arrows, contain the vertices on their respective shared boundaries with region $\def\objectstyle{\scriptstyle}\xymatrix{*+[o][F]{2}}.$ 

If the slice associated with the direct system (\ref{equ:chainm}) starts in either region $\def\objectstyle{\scriptstyle}\xymatrix{*+[o][F]{1}}$ or region $\def\objectstyle{\scriptstyle}\xymatrix{*+[o][F]{2}}$, so $m = n+x$ with $x \geq 0,$ then it will eventually meet the left boundary of region $\def\objectstyle{\scriptstyle}\xymatrix{*+[o][F]{2}}$ and the inverse system (\ref{equ:blankcommahocolim}) is of the form
\begin{equation} \label{equ:0arrowk}
0 \leftarrow 0 \leftarrow \cdots 0 \leftarrow k \leftarrow k \leftarrow \cdots
\end{equation}
with $x$ zeroes, by Proposition \ref{pro:homsin}. Alternatively, suppose the slice associated with the direct system (\ref{equ:chainm}) starts in region $\def\objectstyle{\scriptstyle}\xymatrix{*+[o][F]{3}},$ so $n>m.$ Then $\Sigma^{m-j}X_j \not\in \BW(\Sigma^nX_0)$ for each $j \in \BN_0,$ so the inverse system (\ref{equ:blankcommahocolim}) is of the form
$$0 \leftarrow 0 \leftarrow 0 \leftarrow \cdots$$
by Proposition \ref{pro:homsin}. Therefore, when $n>m,$ the limit of the inverse system (\ref{equ:blankcommahocolim}) %,
%$$
%\xymatrix{
%(\Sigma^mX_0,\sE_n) && (\Sigma^{m-1}X_1,\sE_n) \ar[ll]_-{(\eta_0,\sE_n)} && (\Sigma^{m-2}X_2,\sE_n) \ar[ll]_-{(\eta_1,\sE_n)} & \cdots, \ar[l]
%}
%$$
is trivially zero. So, suppose $n\leq m.$ To establish the theorem, we must show that the inverse limit of (\ref{equ:blankcommahocolim}) is $k$. Consider one of the nonzero maps in (\ref{equ:blankcommahocolim}). Denote this map
\begin{equation}\label{equ:nonzerohocolimmap}
\xymatrix{
(\Sigma^{m-j}X_j,\sE_n) && (\Sigma^{m-j-1}X_{j+1},\sE_n) \ar[ll]_-{(\eta_j,\sE_n)}
}
\end{equation}
for $j\geq x.$ It is enough to show that each such map is nonzero, because this implies it is an isomorphism, as the hom-spaces are one-dimensional vector spaces. In order to do this, consider the direct system (\ref{equ:slice}),
$$
\xymatrix{
\Sigma^nX_0 \ar[r]^-{\xi_0} & \Sigma^{n-1}X_1 \ar[r]^-{\xi_1} & \Sigma^{n-2}X_2 \ar[r]& \cdots,
}
$$
and apply the functors $(\Sigma^{m-j-1}X_{j+1},-)$ and $(\Sigma^{m-j}X_j,-)$ separately to obtain the respective direct systems
\begin{equation}\label{equ:m-j-1}
\xymatrix{
(\Sigma^{m-j-1}X_{j+1},\Sigma^nX_0) \ar[rrr]^-{(\Sigma^{m-j-1}X_{j+1},\xi_0)} &&& (\Sigma^{m-j-1}X_{j+1},\Sigma^{n-1}X_1) \ar[r]& \cdots
}
\end{equation}
and
\begin{equation}\label{equ:m-j}
\xymatrix{
(\Sigma^{m-j}X_j,\Sigma^nX_0) \ar[rrr]^-{(\Sigma^{m-j}X_j,\xi_0)} &&& (\Sigma^{m-j}X_j,\Sigma^{n-1}X_1) \ar[r]& \cdots.
}
\end{equation}
Both of these direct systems look like the direct system (\ref{equ:origchain}),
$$
\xymatrix{
(Y,\Sigma^nX_0) \ar[r]^-{(Y,\xi_0)} & (Y,\Sigma^{n-1}X_1) \ar[r]^-{(Y,\xi_1)} & (Y,\Sigma^{n-2}X_2) \ar[r]& \cdots,
}
$$ 
with $Y$ replaced with $\Sigma^{m-j-1}X_{j+1}$ and $\Sigma^{m-j}X_j$ respectively. Since both $\Sigma^{m-j-1}X_{j+1}$ and $\Sigma^{m-j}X_j$ are in $\BW(\Sigma^nX_0),$ the direct systems (\ref{equ:m-j-1}) and (\ref{equ:m-j}) both have colimit isomorphic to $k,$ by Proposition \ref{pro:homsin}. We may write their respective colimits as $(\Sigma^{m-j-1}X_{j+1},\sE_n)$ and $(\Sigma^{m-j}X_j,\sE_n).$ The direct systems (\ref{equ:m-j-1}) and (\ref{equ:m-j}) can be combined to yield the commutative ladder
\begin{equation} \begin{aligned} \label{equ:ladder}
\xymatrix{
(\Sigma^{m-j-1}X_{j+1},\Sigma^nX_0) \ar[rrr]^-{(\Sigma^{m-j-1}X_{j+1},\xi_0)} \ar[dd]^-{(\eta_j,\Sigma^nX_0)} &&& (\Sigma^{m-j-1}X_{j+1},\Sigma^{n-1}X_1)  \ar[dd]^-{(\eta_j,\Sigma^{n-1}X_1)} \ar[r]& \cdots \\ \\
(\Sigma^{m-j}X_j,\Sigma^nX_0) \ar[rrr]^-{(\Sigma^{m-j}X_j,\xi_0)} &&& (\Sigma^{m-j}X_j,\Sigma^{n-1}X_1) \ar[r]& \cdots
}
\end{aligned} \end{equation}
which has colimit equal to (\ref{equ:nonzerohocolimmap}). Now, note that the direct systems (\ref{equ:m-j-1}) and (\ref{equ:m-j}) will look like
$$ 0 \rightarrow 0 \rightarrow \dots \rightarrow 0 \rightarrow k \rightarrow k \rightarrow \cdots$$
by Equation (\ref{equ:0arrowk}). Therefore, the ladder (\ref{equ:ladder}) looks like
$$
\xymatrix{
0 \ar[r] \ar[d] & \cdots \ar[r] & 0 \ar[r] \ar[d] & 0 \ar[r] \ar[d] & k \ar[r] \ar[d] & k \ar[r] \ar[d] & \cdots \\
0 \ar[r] & \cdots \ar[r] & 0 \ar[r] & k \ar[r] & k \ar[r] & k \ar[r] & \cdots.
}
$$
For $l \geq j-1,$ the ``step"  
$$
\xymatrix{
(\Sigma^{m-j}X_{j},\Sigma^{n-l}X_l) && (\Sigma^{m-j-1}X_{j+1},\Sigma^{n-l}X_l) \ar[ll]_-{(\eta_j,\Sigma^{n-l}X_l)}
}
$$
in the ladder (\ref{equ:ladder}) is nonzero by \citep[Lemma 2.5]{J}. For example, for $l=j-1,$ we can illustrate this on the Auslander-Reiten quiver as follows.
\[ \def\objectstyle{\scriptstyle}
  \xymatrix @-4.5pc @! {
    & & & & & & & & *{} \\
    & & & & & & *{} \ar@{~~}[ur] & & & & \\
    &  \ar@{-}[ddddrrrr] & & & & \Sigma^{m-j-1}X_{j+1} \ar@{~}[ur] \ar@{~}[ddrr] & &  &  & *{}\ar@{--}[ur] & &  \\
    & & & & \Sigma^{m-j}X_{j} \ar@{..}[dddlll] \ar@{~}[dddrrr] \ar@{->}[ur]_-{\eta_j} & & & & & *{} & *{}\ar@{~~}[ur] & &  \\
    *{} & & & & &&& \Sigma^{n-l}X_l \ar@{-}[uurr]  \ar@{~}[ddrr] & & & & *{}\ar@{~~}[ur] &\\ 
    & & & & & & & *{} & & & &  \\
    *{} \ar@{--}[r] & {\hspace{1mm} \Sigma^mX_0 \hspace{1mm} } \ar@{-}[rrrr] & &  & & {\hspace{1mm} \Sigma^nX_0 \hspace{1mm} } \ar@{-}[uurr] \ar@{-}[rrrrrr] & &*{}\ar@{~}[uuurrr]&  & *{} \ar@{~}[uurr] &&\ar@{--}[r]&*{}\\
           }
\]
Hence the colimit of the ladder (\ref{equ:ladder}) is a nonzero map, which means the map (\ref{equ:nonzerohocolimmap}),
$$
\xymatrix{
(\Sigma^{m-j}X_j,\sE_n) && (\Sigma^{m-j-1}X_{j+1},\sE_n) \ar[ll]_-{(\eta_j,\sE_n)},
}
$$
is nonzero as desired.
\end{proof}

\begin{Corollary} \label{cor:indecomphoc}
The hocolimits $\sE_n$ of $\Dbar$ are indecomposable objects.
\end{Corollary}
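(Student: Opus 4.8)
The plan is to deduce this directly from the endomorphism computation in Theorem~\ref{pro:homsbetweenhocolims}. Taking $m = n$ in that theorem gives $\End_{\Dbar}(\sE_n) = \hmD(\sE_n,\sE_n) \cong k$, so the endomorphism space of $\sE_n$ is one-dimensional over $k$. The only thing to check before concluding is that $\sE_n$ is nonzero; this follows from Lemma~\ref{lem:functoranddual1}, since $F(\sE_n) \cong \Sigma^n k[T^{-1}] \neq 0$ and $F$ cannot send a nonzero object to $0$.

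Given that $\sE_n \neq 0$, the identity morphism $\id_{\sE_n}$ is a nonzero element of the one-dimensional $k$-vector space $\End_{\Dbar}(\sE_n)$, hence spans it; so $\End_{\Dbar}(\sE_n) \cong k$ as a $k$-algebra. A $k$-algebra that is a field (in particular $k$ itself) is local, and an object of an additive category whose endomorphism ring is local is indecomposable. Therefore each $\sE_n$ is indecomposable, as claimed. There is no real obstacle here: once Theorem~\ref{pro:homsbetweenhocolims} is in hand the statement is an immediate formal consequence, the only (trivial) point being to record that $\sE_n \neq 0$ so that the one-dimensional endomorphism algebra is genuinely $k$ rather than the zero ring.
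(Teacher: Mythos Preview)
Your proof is correct and follows the same approach as the paper: both deduce indecomposability from Theorem~\ref{pro:homsbetweenhocolims} by observing that $\End_{\Dbar}(\sE_n)\cong k$ is a local ring. Your extra check that $\sE_n\neq 0$ is harmless but redundant, since the vector-space isomorphism $\End_{\Dbar}(\sE_n)\cong k$ already forces the endomorphism ring to be nonzero.
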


\begin{proof}
By Theorem \ref{pro:homsbetweenhocolims}, we have $\hmD(\sE_n,\sE_n) \cong k.$ Hence, $\hmD(\sE_n,\sE_n)$ is a one-dimensional vector space over $k,$ and is therefore a local ring. Thus, $\sE_n$ is an indecomposable object, as required.
%the above needs a reference
\end{proof}

\section{The geometric model of $\Dbar$}
\label{sec:geometric}

Theorem \ref{rmk:symmetry} states that 
\begin{equation} \label{equ:symmetry2} \hmD(Y,\Sigma \sE_n) \cong \hmD(\sE_n,\Sigma Y) \cong \Big \{ \begin{matrix} k \hspace{1.5 mm} \textrm{if} \hspace{1.5 mm} Y \in \BW(\Sigma^{n+1}X_0), \\ 0 \hspace{1.5 mm} \textrm{if} \hspace{1.5 mm} Y \not\in \BW(\Sigma^{n+1}X_0). \end{matrix} 
\end{equation} The indecomposables corresponding to $\BW(\Sigma^{n+1}X_0)$ are highlighted in the Auslander-Reiten quiver of $\cD$ below. 
\[ \def\objectstyle{\scriptstyle}
  \xymatrix @-2.85pc @! {
    & \vdots \ar[dr] & & \vdots \ar[dr] & & \vdots \ar[dr] & & \vdots \ar[dr] & & \vdots \ar[dr] & & \vdots & \\
    \cdots \ar[dr]& & \Sigma^{n+1} X_3 \ar[ur] \ar[dr] & & \Sigma^{n} X_3 \ar[ur] \ar[dr] & & \Sigma^{n-1} X_3 \ar[ur] \ar[dr] & & \Sigma^{n-2} X_3 \ar[ur] \ar[dr] & & \circ \ar[ur] \ar[dr] & & \cdots \\
    & \circ \ar[ur] \ar[dr] & & \Sigma^{n+1} X_2 \ar[ur] \ar[dr] & & \Sigma^{n} X_2 \ar[ur] \ar[dr] & & \Sigma^{n-1} X_2 \ar[ur] \ar[dr] & & \circ \ar[ur] \ar[dr] & & \circ \ar[ur] \ar[dr] & \\
    \cdots \ar[ur]\ar[dr]& & \circ \ar[ur] \ar[dr] & & \Sigma^{n+1} X_1 \ar[ur] \ar[dr] & & \Sigma^{n} X_1 \ar[ur] \ar[dr] & & \circ \ar[ur] \ar[dr] & & \circ \ar[ur] \ar[dr] & & \cdots\\
    & \circ \ar[ur] & & \circ \ar[ur] & & \Sigma^{n+1} X_0 \ar[ur] & & \circ \ar[ur] & & \circ \ar[ur] & & \circ \ar[ur] & \\
               }
\]
The formula
$$
\Sigma^{n-l}X_{l-k-2} = (-n+k,-n+l)
$$
defines the ``standard'' coordinate system on the Auslander-Reiten quiver of $\cD$ given in Remark 1.4 of \citep{J}. This is illustrated below.
\[ \def\objectstyle{\scriptstyle}
  \xymatrix @-2.85pc @! {
    & \vdots \ar[dr] & & \vdots \ar[dr] & & \vdots \ar[dr] & & \vdots \ar[dr] & & \vdots \ar[dr] & & \vdots & \\
    \cdots \ar[dr]& & (-5,0) \ar[ur] \ar[dr] & & (-4,1) \ar[ur] \ar[dr] & & (-3,2) \ar[ur] \ar[dr] & & (-2,3) \ar[ur] \ar[dr] & & (-1,4) \ar[ur] \ar[dr] & & \cdots \\
    & (-5,-1) \ar[ur] \ar[dr] & & (-4,0) \ar[ur] \ar[dr] & & (-3,1) \ar[ur] \ar[dr] & & (-2,2) \ar[ur] \ar[dr] & & (-1,3) \ar[ur] \ar[dr] & & (0,4) \ar[ur] \ar[dr] & \\
    \cdots \ar[ur]\ar[dr]& & (-4,-1) \ar[ur] \ar[dr] & & (-3,0) \ar[ur] \ar[dr] & & (-2,1) \ar[ur] \ar[dr] & & (-1,2) \ar[ur] \ar[dr] & & (0,3) \ar[ur] \ar[dr] & & \cdots\\
    & (-4,-2) \ar[ur] & & (-3,-1) \ar[ur] & & (-2,0) \ar[ur] & & (-1,1) \ar[ur] & & (0,2) \ar[ur] & & (1,3) \ar[ur] & \\
               }
\]
Pairs $(i,j)$ with $i \leq j-2$ can be viewed as both coordinate pairs and arcs between non-neighbouring integers on the number line. For example, the coordinate pair associated with $\Sigma^{n+1} X_0$ is $(-n-3,-n-1),$ and by Remark 3.4 of \citep{J}, this would be associated with an arc drawn between the integers $-n-3$ and $-n-1$ on the number line. We illustrate below the arcs which correspond to to the indecomposable objects of $\BW(\Sigma^{n+1}X_0).$ \\

\begin{equation} \begin{aligned} \label{equ:crossings1} \def\objectstyle{\scriptstyle}
\vcenter{
  \xymatrix @-2.8pc @! { && \cdots &&&&&&& \cdots \\
       \rule{0ex}{7.5ex} \ar@{--}[r]
     & *{}\ar@{-}[r]
     & *{\rule{0.1ex}{0.8ex}} \ar@{-}[r] \ar@/^2.8pc/@{-}[rrrr] \ar@/^3.55pc/@{-}[rrrrr] \ar@/^4.3pc/@{-}[rrrrrr] \ar@/^5.05pc/@{-}[rrrrrrr]
     & *{\: (-n-4) \:} \ar@{-}[r] \ar@/^1.75pc/@{-}[rrr] \ar@/^2.65pc/@{-}[rrrr] \ar@/^3.4pc/@{-}[rrrrr] \ar@/^4.15pc/@{-}[rrrrrr]
     & *{\: (-n-3) \:} \ar@{-}[r] \ar@/^1.0pc/@{-}[rr] \ar@/^1.75pc/@{-}[rrr] \ar@/^2.5pc/@{-}[rrrr] \ar@/^3.25pc/@{-}[rrrrr]
     & *{\: (-n-2) \:} \ar@{-}[r]
     & *{\: (-n-1) \:} \ar@{-}[r] 
     & *{\: (-n) \:} \ar@{-}[r] 
     & *{\: (-n+1) \:} \ar@{-}[r]
     & *{\rule{0.1ex}{0.8ex}} \ar@{-}[r]
     & *{}\ar@{--}[r]
     & *{} \\
                    }
}
\end{aligned}
\end{equation}
They are, in fact, all the `overarcs' of the integer $-n-2.$ Lemma 3.6 of \citep{J} states that if $\fa, \fb$ are arcs with $a$ and $b$ the corresponding indecomposables, then $(a,\Sigma b) \cong (b,\Sigma a) \cong k$ if and only if $\fa$ and $\fb$ cross; if $\fa$ and $\fb$ don't cross, then  $(a,\Sigma b) \cong (b,\Sigma a) \cong 0.$ \\

\noindent To extend this arc model to $\Dbar,$ we formulate the following definition.

\begin{Definition}
An arc is either (1) a pair of integers $(i,j)$ with $i \leq j-2,$ which will be referred to as a {\it{finite arc}}, or (2) a pair $(i,\infty)$ with $i$ an integer, which will be referred to as an {\it{infinite arc}}. Two finite arcs $(i,j), (r,s)$ cross if either $i < r < j < s$ or $r < i < s < j.$ A finite arc $(i,j)$ crosses an infinite arc $(n,\infty)$ if $i < n < j.$ There is no meaning associated with two infinite arcs crossing. \demo
\end{Definition}

\begin{Remark}
Let $a,b,n \in \BZ.$ Then the arcs $(a,b)$ and $(n,\infty)$ cross if $a < n < b.$ That is, we explictly rule out the possibility that we may draw an arc which passes ``over'' the arc to infinity and does not cross it. This is illustrated below. \\

\[
\vcenter{
  \xymatrix @-3.5pc @! {
&&&&&&&\infty \ar@{-}[dd] && \\ \\ 
       \rule{0ex}{7.5ex} \ar@{--}[r]
     & *{}\ar@{-}[r]
     & *{\rule{0.1ex}{0.8ex}} \ar@{-}[r] \ar@/^2.0pc/@{-}[rrr]
     & *{\rule{0.1ex}{0.8ex}} \ar@{-}[r] \ar@/^1.25pc/@{-}[rr]
     & *{\rule{0.1ex}{0.8ex}} \ar@{-}[r]
     & *{\rule{0.1ex}{0.8ex}} \ar@{-}[r] \ar@/^6.0pc/@{-}[rrrr] \ar@/^1.5pc/@{-}[rr]
     & *{\rule{0.1ex}{0.8ex}} \ar@{-}[r] 
     & *{\rule{0.1ex}{0.8ex}} \ar@{-}[r] 
     & *{\rule{0.1ex}{0.8ex}} \ar@{-}[r]
     & *{\rule{0.1ex}{0.8ex}} \ar@{-}[r] \ar@/^-1.5pc/@{-}[ll]
     & *{\rule{0.1ex}{0.8ex}} \ar@{-}[r]
     & *{\rule{0.1ex}{0.8ex}} \ar@{-}[r] \ar@/^-1.25pc/@{-}[ll]
     & *{\rule{0.1ex}{0.8ex}} \ar@{-}[r] \ar@/^-2.0pc/@{-}[lll]
     & *{}\ar@{--}[r]
     & *{}
                    }
}
\]
Such an arc diagram is not permitted! \demo
\end{Remark}

\noindent The sketches with arcs are simply visualisations of sets of arcs. The infinite arcs `represent' the objects $\sE_n$ by the following extension of \citep[Lemma 3.6]{J}.

\begin{Proposition} \label{pro:infinitearc} 
Let $\sE_n$ be a hocolimit in $\Dbar.$ Let $\fY$ be an arc representing a finite indecomposable object of $\Dbar$ and consider the infinite arc $\fE_n = (-n-2,\infty)$. Then $\hmD(Y,\Sigma \sE_n) \cong \hmD(\sE_n,\Sigma Y) \cong k$ if and only if $\fE_n$ and $\fY$ cross.
\end{Proposition}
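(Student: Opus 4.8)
The plan is to reduce the statement to Theorem~\ref{rmk:symmetry} and then carry out a change of coordinates. By Theorem~\ref{rmk:symmetry}, equations~(\ref{equ:homsinsym}) and~(\ref{equ:homsoutsym}), we already have $\hmD(Y,\Sigma\sE_n)\cong\hmD(\sE_n,\Sigma Y)$, this common space being isomorphic to $k$ when $Y\in\BW(\Sigma^{n+1}X_0)$ and being $0$ otherwise. So the entire content of the proposition is the combinatorial assertion: if $Y$ is the finite indecomposable attached to the arc $\fY=(i,j)$ (so $i\le j-2$), then $Y\in\BW(\Sigma^{n+1}X_0)$ if and only if $i<-n-2<j$, which by the definition of crossing is exactly the assertion that $\fY$ crosses $\fE_n=(-n-2,\infty)$.

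First I would express $\BW(\Sigma^{n+1}X_0)$ in the standard coordinates of \citep[Remark 1.4]{J}. By definition $\BW(\Sigma^{n+1}X_0)=\{\,\Sigma^{(n+1)-p}X_q\ :\ 0\le p\le q\,\}$, and substituting $\Sigma^{(n+1)-p}X_q$ into the coordinate formula $\Sigma^{a-b}X_{b-c-2}=(-a+c,-a+b)$ (that is, solving $b-c-2=q$ and $a-b=(n+1)-p$) gives the coordinate pair, equivalently the arc, $(p-q-n-3,\ p-n-1)$. As a sanity check, $p=q=0$ recovers the pair $(-n-3,-n-1)$ attached to $\Sigma^{n+1}X_0$ recorded in the text above. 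Hence the arcs of the objects of $\BW(\Sigma^{n+1}X_0)$ are precisely the pairs $(p-q-n-3,\ p-n-1)$ with integers $0\le p\le q$.

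Next I would match this family with the overarcs of $-n-2$. If $(i,j)=(p-q-n-3,\ p-n-1)$ with $0\le p\le q$, then $j=p-n-1\ge -n-1$ and $i=p-q-n-3\le -n-3$. Conversely, given a finite arc $(i,j)$ with $i\le -n-3$ and $j\ge -n-1$, set $p:=j+n+1$ and $q:=j-i-2$; then $p\ge 0$, and $q\ge 0$ because $i\le -n-3$ and $j\ge -n-1$ force $j\ge i+2$, and $p\le q$ holds precisely because $i\le -n-3$, while $(p-q-n-3,\ p-n-1)=(i,j)$. Thus $\BW(\Sigma^{n+1}X_0)$ consists exactly of the finite arcs $(i,j)$ with $i\le -n-3$ and $j\ge -n-1$, i.e.\ with $i<-n-2<j$; by the definition of crossing these are precisely the finite arcs that cross $\fE_n=(-n-2,\infty)$. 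Together with the first paragraph, this proves the proposition.

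All the ingredients are already available, so I do not expect a genuine obstacle here; the only point needing care is the bookkeeping in the coordinate change, in particular arranging that the constraint $0\le p\le q$ matches the crossing condition $i<-n-2<j$ exactly and not merely up to an off-by-one shift.
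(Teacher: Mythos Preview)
Your proof is correct and follows essentially the same approach as the paper: both reduce to Theorem~\ref{rmk:symmetry} and then identify $\BW(\Sigma^{n+1}X_0)$ with the set of finite arcs crossing $(-n-2,\infty)$. The paper's version is terser, simply invoking the pictorial description of $\BW(\Sigma^{n+1}X_0)$ as the overarcs of $-n-2$ established in the discussion preceding the proposition, whereas you carry out the coordinate change explicitly; your bookkeeping is accurate.
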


\begin{proof}
The arc $\fE_n$ crosses precisely the arcs drawn in Figure (\ref{equ:crossings1}); this is illustrated below.
\begin{equation} \begin{aligned} \label{equ:crossings2} \def\objectstyle{\scriptstyle}
\vcenter{
  \xymatrix @-2.8pc @! { &&&&&\infty \ar@{-}[ddd] \\ \\ && \cdots &&&&&&& \cdots \\
       \rule{0ex}{7.5ex} \ar@{--}[r]
     & *{}\ar@{-}[r]
     & *{\rule{0.1ex}{0.8ex}} \ar@{-}[r] \ar@/^2.8pc/@{-}[rrrr] \ar@/^3.55pc/@{-}[rrrrr] \ar@/^4.3pc/@{-}[rrrrrr] \ar@/^5.05pc/@{-}[rrrrrrr]
     & *{\: (-n-4) \:} \ar@{-}[r] \ar@/^1.75pc/@{-}[rrr] \ar@/^2.65pc/@{-}[rrrr] \ar@/^3.4pc/@{-}[rrrrr] \ar@/^4.15pc/@{-}[rrrrrr]
     & *{\: (-n-3) \:} \ar@{-}[r] \ar@/^1.0pc/@{-}[rr] \ar@/^1.75pc/@{-}[rrr] \ar@/^2.5pc/@{-}[rrrr] \ar@/^3.25pc/@{-}[rrrrr]
     & *{\: (-n-2) \:} \ar@{-}[r]
     & *{\: (-n-1) \:} \ar@{-}[r] 
     & *{\: (-n) \:} \ar@{-}[r] 
     & *{\: (-n+1) \:} \ar@{-}[r]
     & *{\rule{0.1ex}{0.8ex}} \ar@{-}[r]
     & *{}\ar@{--}[r]
     & *{} \\
                    }
}
\end{aligned}
\end{equation}
Hence $\fE_n$ crosses precisely the arcs $\fY$ which correspond to the indecomposable objects $Y$ of $\BW(\Sigma^{n+1}X_0),$ so Theorem \ref{rmk:symmetry} says that $\hmD(Y,\Sigma \sE_n) \cong \hmD(\sE_n,\Sigma Y) \cong k$ if and only if $\fE_n$ and $\fY$ cross.
\end{proof}

\begin{Remark} \label{rmk:nonsymhoc}
%Lemma 3.6 of \citep{J} and Theorem \ref{pro:homsbetweenhocolims} are not compatible; indeed
By Theorem \ref{pro:homsbetweenhocolims}, we have that
$$\hmD(\sE_m,\Sigma\sE_n) \cong \Big \{ \begin{matrix} k \hspace{1.5 mm} \textrm{if} \hspace{2 mm} m-1 \geq n, \\ 0 \hspace{1.5 mm} \textrm{if} \hspace{2 mm} m-1 < n, \end{matrix} $$
and 
$$\hmD(\sE_n,\Sigma\sE_m) \cong \Big \{ \begin{matrix} k \hspace{1.5 mm} \textrm{if} \hspace{2 mm} n \geq m+1, \\ 0 \hspace{1.5 mm} \textrm{if} \hspace{2 mm} n < m+1. \end{matrix} $$
Together, these imply 
$$\hmD(\sE_m,\Sigma\sE_n) \cong \hmD(\sE_n,\Sigma\sE_m) \cong 0$$ if $m-1 < n < m+1,$ which is only possible if $n=m,$ and 
$$\hmD(\sE_m,\Sigma\sE_n) \cong \hmD(\sE_n,\Sigma\sE_m) \cong k$$ if $m-1 \geq n \geq m+1,$ which is impossible. Hence, it is impossible to devise a symmetric notion of crossing of the infinite arcs $\fE_m$ and $\fE_n$ which corresponds to non-vanishing of the Hom-spaces in the equations. Indeed, the isomorphisms in (\ref{equ:symmetry2}) display symmetry, whilst the isomorphism of Theorem \ref{pro:homsbetweenhocolims} is inherently non-symmetrical. \demo
\end{Remark}

\section{Weakly cluster tilting subcategories}
\label{sec:wcts}

A series of arc lemmas is necessary to prove the subsequent results about (weakly) cluster tilting subcategories of $\Dbar.$ In this section we also look at subcategories of $\Dbar$ with special arc configurations and show when they correspond to weakly cluster tilting subcategories.

\noindent By \citep{I}, a weakly cluster tilting subcategory $\cT$ of a triangulated category is defined by $$X \in \cT \hspace{1.5mm} \textrm{if and only if} \hspace{1.5mm} (\cT,\Sigma X) \cong (X, \Sigma \cT) \cong 0.$$ A weakly cluster tilting subcategory is closed under direct sums and summands, so if the ambient triangulated category is $\Dbar$, it is determined by its indecomposable objects, i.e. by the sets of arcs corresponding to its indecomposables.

\begin{Proposition}\label{pro:wct}
A weakly cluster tilting subcategory $\cT$ of $\Dbar$ may contain at most one of the objects $\sE_n$. 
\end{Proposition}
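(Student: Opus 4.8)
The plan is to argue by contradiction, using the Hom computation of Theorem \ref{pro:homsbetweenhocolims} together with the fact, recorded in the Remark following the definition of $\Dbar$, that $\Sigma \sE_n = \sE_{n+1}$ for every $n \in \BZ$.

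So suppose $\cT$ is a weakly cluster tilting subcategory of $\Dbar$ containing $\sE_m$ and $\sE_n$ for two distinct integers $m \ne n$. Since the roles of $m$ and $n$ are interchangeable in the hypothesis, I may assume $m > n$, hence $n + 1 \leq m$. The first step is to evaluate $\hmD(\sE_m, \Sigma \sE_n)$: by the shift identity this equals $\hmD(\sE_m, \sE_{n+1})$, and since $n+1 \leq m$, Theorem \ref{pro:homsbetweenhocolims} gives $\hmD(\sE_m, \sE_{n+1}) \cong k \neq 0$. The second step is to invoke the definition of a weakly cluster tilting subcategory: because $\sE_n \in \cT$, we have $(\cT, \Sigma \sE_n) \cong 0$, and in particular $\hmD(\sE_m, \Sigma \sE_n) \cong 0$ since $\sE_m \in \cT$ as well. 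These two conclusions are contradictory, so $\cT$ cannot contain two distinct hocolimits, which is the assertion.

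There is essentially no obstacle here once Theorem \ref{pro:homsbetweenhocolims} is available; the only point requiring a moment's care is that $\hmD(\sE_m, \sE_n)$ is not symmetric in $m$ and $n$, so one must fix an ordering of the two indices before applying the theorem. Alternatively, the whole argument can be read off directly from the first displayed isomorphism in Remark \ref{rmk:nonsymhoc}, namely $\hmD(\sE_m, \Sigma \sE_n) \cong k$ whenever $m - 1 \geq n$, which already encodes the shift identity and the ordering.
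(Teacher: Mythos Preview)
Your proof is correct and follows essentially the same approach as the paper: both argue that two distinct hocolimits in $\cT$ would force a nonzero $\Hom(\sE_m,\Sigma\sE_n)$, contradicting the weakly cluster tilting condition. The paper phrases this via Remark~\ref{rmk:nonsymhoc} (which you also mention as an alternative), while you unwind the same computation directly from Theorem~\ref{pro:homsbetweenhocolims} and the identity $\Sigma\sE_n=\sE_{n+1}$.
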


\begin{proof} If $\cT$ is weakly cluster tilting, then $\sE_n \in \cT$ implies $(\cT,\Sigma \sE_n) \cong (\sE_n, \Sigma \cT) \cong 0.$ If $\sE_m$ is also in $\cT,$ then by Remark \ref{rmk:nonsymhoc} we have that $(\sE_m,\Sigma \sE_n) \cong (\sE_n, \Sigma \sE_m) \cong 0$ implies $m=n.$ 
\end{proof}

\begin{Definition} \label{def:fountains}
Fountains are originally defined in \citep[Definition 3.2]{J}. We recall their definitions. Let $\fT$ be a set of finite arcs. If there are infinitely many arcs of the form $(m,\text{---})$ in $\fT$, then $m$ is called a {\textit{right-fountain}} of $\fT$. Conversely if there are infinitely many arcs of the form $(\text{---},n)$ in $\fT$, then $n$ is called a {\textit{left-fountain}} of $\fT$. If $m=n$ is both a left- and a right-fountain of $\fT$, then it is a fountain. \demo
\end{Definition}

\begin{Definition} \label{def:locallyfiniteconfig}
Let $\fT$ be a set of finite arcs. If for all $n \in \BZ$ there are only finitely many arcs of the form $(n,\text{---})$ in $\fT$, and also only finitely many arcs of the form $(\text{---},n)$ in $\fT$, then $\fT$ is called {\textit{locally finite}}. \demo
\end{Definition}

\begin{Definition} \label{def:strongoverarc}
Let $\fT$ be a maximal, non-crossing set of finite arcs. If $(p,q) \in \fT,$ then a {\textit{strong overarc}} of $(p,q)$ is a finite arc $(x,y)$ in $\fT$ where $x<p<q<y.$ We also define a strong overarc of an integer $n$. If $n \in \BZ$ then a strong overarc (with respect to $\fT$) of $n$ is an arc $(x,y)$ in $\fT$ where $x<n<y.$ \demo
\end{Definition}

\begin{Lemma}\label{lem:noncross}
Let $\fT'$ be a set of finite arcs and let $\fT = \{(m,\infty)\}\cup \fT'$ be a set of arcs which satisfies the following condition: if a finite arc $\fa$ crosses neither $(m,\infty)$ nor an arc in $\fT'$, then $\fa \in \fT$. Then \\
(1) $\fT'$ has a left-fountain $p$ and a right-fountain $q$ (necessarily with $p \leq m \leq q$), and \\
(2)  Either $p \in \{m-1,m\}$ or $(p,m) \in \fT$ (and symmetrically, either $q \in \{m,m+1\}$ or $(m,q) \in \fT$).
\end{Lemma}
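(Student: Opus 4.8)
The plan is to reduce the statement to a combinatorial fact about triangulations of a half-infinite polygon. The first step is to split $\fT'$: using that $\fT$ is non-crossing (which holds whenever $\fT$ comes from a weakly cluster tilting subcategory, being the converse of the stated condition), every ``bridging'' finite arc $(i,j)$ with $i<m<j$ crosses $(m,\infty)$, so $\fT'$ contains none of them and decomposes as $\fT'=\fT'_-\sqcup\fT'_+$, where $\fT'_-=\{(i,j)\in\fT'\mid j\le m\}$ consists of arcs on the half-line $\{\dots,m-1,m\}$ and $\fT'_+=\{(i,j)\in\fT'\mid i\ge m\}$ of arcs on $\{m,m+1,\dots\}$. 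A finite arc with right endpoint $\le m$ never crosses one with left endpoint $\ge m$, so the closure hypothesis, read only for arcs with $j\le m$, says exactly that $\fT'_-$ is a \emph{maximal} non-crossing set of arcs on $\{\dots,m-1,m\}$; symmetrically $\fT'_+$ is maximal on $\{m,m+1,\dots\}$. Everything in (1) and (2) now follows from the single claim below, applied to $\fT'_-$ and, after reflecting the number line, to $\fT'_+$.

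Claim: a maximal non-crossing set $\cS$ of arcs on a half-line $\{\dots,v-1,v\}$ with greatest vertex $v$ has a left-fountain, necessarily $\le v$. I would prove this by contradiction, assuming $\cS$ has no left-fountain; then every vertex is the right endpoint of only finitely many arcs of $\cS$, and the same holds for each restriction $\cS_w:=\{(i,j)\in\cS\mid j\le w\}$, which one checks is again maximal non-crossing on $\{\dots,w-1,w\}$ (an arc with right endpoint $\le w$ that crosses nothing in $\cS_w$ crosses nothing in $\cS$, since the remaining arcs of $\cS$ lie inside $[w,v]$). First a preliminary observation: some arc of $\cS$ ends at $v$. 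Indeed, otherwise $(v-2,v)\notin\cS$ must cross an arc of $\cS$, which, as all arcs lie left of $v$, forces an arc $(a,v-1)\in\cS$ with $a\le v-3$; taking $a$ minimal (possible, no left-fountain), the arc $(a,v)$ crosses nothing in $\cS$ --- a crossing arc would have to cross $(a,v-1)$ or contradict minimality of $a$ --- so $(a,v)\in\cS$ by maximality, a contradiction.

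Now build a strictly decreasing sequence $v=u_0>u_1>u_2>\cdots$ by taking $(u_{k+1},u_k)$ to be the longest arc of $\cS$ ending at $u_k$ (this exists by the observation applied to $\cS_{u_k}$). Because $(u_{k+1},u_k)$ is the longest such arc, an arc of $\cS_{u_k}$ that does not have both endpoints in $\{u_{k+1},\dots,u_k\}$ must have right endpoint $\le u_{k+1}$ (it cannot cross $(u_{k+1},u_k)$, and equality of its right endpoint with $u_k$ is excluded by ``longest''); hence $\cS_{u_k}$ is the disjoint union of $\cS_{u_{k+1}}$ and a triangulation $T_k$ of the finite polygon on $\{u_{k+1},\dots,u_k\}$. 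Since the $u_k$ decrease to $-\infty$, this gives $\cS=\bigsqcup_{k\ge0}T_k$. But then $(u_2,u_0)\notin\cS$ (its endpoints do not both lie in a single $\{u_{k+1},\dots,u_k\}$) and $(u_2,u_0)$ crosses no arc of $\cS$ (an arc of $T_k$ lies in $\{u_{k+1},\dots,u_k\}$, and a short case check on $k=0,1$ and on $k\ge2$ shows it cannot interleave with $(u_2,u_0)$), contradicting maximality. This proves the claim, hence part (1) with $p\le m\le q$.

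For (2), suppose $p\le m-2$ (if $p\in\{m-1,m\}$ there is nothing to show). As $p$ is a left-fountain of $\fT'_-$ there are infinitely many $(i,p)\in\fT'_-$ with $i<p$, so these left endpoints $i$ are unbounded below. If some $(a,b)\in\fT'_-$ crossed $(p,m)$, then necessarily $a<p<b<m$, but choosing $i<a$ with $(i,p)\in\fT'_-$ gives $i<a<p<b$, so $(i,p)$ and $(a,b)$ cross --- impossible since $\fT'_-$ is non-crossing. Thus $(p,m)$ crosses nothing in $\fT'_-$, so $(p,m)\in\fT'_-\subseteq\fT$ by maximality, and the assertion about $q$ is symmetric. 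The main obstacle is the Claim: a maximal non-crossing set on a half-line is not visibly forced to accumulate a fountain --- the natural ``zig-zag'' candidates turn out not to be maximal, since a long arc over the top vertex can always be added --- so one genuinely needs the ``cut off the longest top arc and recurse'' decomposition together with the fact that some arc reaches the top vertex, and a little care with the small polygons and degenerate vertices near the top of each half-line.
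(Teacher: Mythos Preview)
Your approach is correct and genuinely different from the paper's. The paper proceeds by a direct seven-case analysis on the arcs of $\fT'$ ending at $m$, then at $l$, and so on; you instead split $\fT'$ into left and right halves along $m$, observe each half is a maximal non-crossing configuration on a half-line, and reduce everything to your standalone Claim (any such configuration has a left-fountain), proved via the ``peel off the longest top arc and recurse'' decomposition. The paper's route is shorter and more hands-on; yours isolates a clean reusable combinatorial fact and makes part~(2) drop out more directly from half-line maximality.

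You are right to add the non-crossing hypothesis: the lemma as stated is actually false without it. With $m=0$ and $\fT'=\{(-k,k):k\ge1\}$, every finite arc avoiding $(0,\infty)$ crosses some $(-k,k)$, so the maximality condition holds vacuously, yet $\fT'$ has no left-fountain. The paper's own Case~3 also tacitly uses non-crossing when asserting that $(v,m)$ crosses nothing in $\fT'$.

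One small imprecision: your parenthetical that $\cS_w$ is maximal because ``the remaining arcs of $\cS$ lie inside $[w,v]$'' is not true for arbitrary $w$ (an arc of $\cS$ could straddle $w$). It \emph{is} true for $w=u_k$ --- non-crossing together with $(u_{k+1},u_k)$ being longest forces every arc of $\cS_{u_k}$ with right endpoint above $u_{k+1}$ to lie entirely in $[u_{k+1},u_k]$ --- and that is all you use, so the induction goes through once maximality of $\cS_{u_k}$ is established along the sequence rather than asserted for general $w$.
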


\begin{proof} \*{} \\
By symmetry, it is enough to find a left-fountain $p$ and show that either $p \in \{m-1,m\}$ or $(p,m) \in \fT$. There are seven cases to check. \\
Case $(1)$. Suppose there are infinitely many arcs of the form $(\text{---},m)$ in $\fT'$. Then we have that $p=m$ is a left-fountain. \\
Case $(2)$. Suppose that there are only finitely many arcs of the form $(\text{---},m)$ in $\fT'$ and that $(l,m)$ is the longest such arc. Furthermore, suppose that there are infinitely many arcs in $\fT'$ of the form $(\text{---},l)$. Then we have that $p=l$ is a left-fountain, and the finite arc $(p,m)$ is in $\fT,$ by assumption. \\
Case $(3)$. Suppose that there are only finitely many arcs of the form $(\text{---},m)$ in $\fT'$ and that $(l,m)$ is the longest such arc. Furthermore, suppose that there are only finitely many arcs in $\fT'$ of the form $(\text{---},l)$, and that $(v,l)$ is the longest. But now the arc $(v,m)$ crosses no arc in $\fT'$ nor the arc $(m,\infty)$, and is hence in $\fT$, contradicting the fact that $(l,m)$ is the longest arc of the form $(\text{---},m)$ in $\fT'$. \\
Case $(4)$. Suppose that there are only finitely many arcs of the form $(\text{---},m)$ in $\fT'$ and that $(l,m)$ is the longest such arc. Furthermore, suppose that there is no arc in $\fT'$ of the form $(\text{---},l)$. Then the arc $(l-1,m)$ crosses no arc in $\fT'$ nor the arc $(m,\infty)$, and is hence in $\fT$, contradicting the fact that $(l,m)$ is the longest arc of the form $(\text{---},m)$ in $\fT'$. \\
Case $(5)$. Suppose that there is no arc in $\fT'$ of the form $(\text{---},m)$, and there is also no arc in $\fT'$ of the form $(\text{---},m-1).$ Then this is a contradiction, for the arc $(m-2,m)$ is now necessarily in $\fT,$ for it crosses no finite arc in $\fT'$ nor the arc $(m,\infty).$ \\
Case $(6)$. Suppose that there is no arc in $\fT'$ of the form $(\text{---},m)$, and that there are infinitely many arcs in $\fT'$ of the form $(\text{---},m-1).$ Then we have that $p=m-1$ is a left-fountain. \\
Case $(7)$. Suppose that there is no arc in $\fT'$ of the form $(\text{---},m)$, and that there are only finitely many arcs in $\fT'$ of the form $(\text{---},m-1).$ Let $(l,m-1)$ be the longest such arc. Then $(l,m)$ crosses no arc in $\fT'$ nor the arc $(m,\infty)$ and is hence in $\fT.$ This is a contradiction, because we assumed that there was no arc in $\fT'$ of the form $(\text{---},m)$.

Note, in this proof, the arc $(m,\infty)$ plays a vital ``blocking'' role. Take, for example, Case (5). The fact that there is no arc in $\fT$ of the form $(m-1,\text{---})$ is what permits us to conclude that $(m-2,m)$ is in $\fT$ (for if there was a finite arc of the form $(m-1,\text{---})$ in $\fT$, then it would necessarily cross $(m,\infty)$, which would be a contradiction).
\end{proof}

\begin{Lemma} \label{lem:pminus1}
Let $\fT$ be a maximal, non-crossing, locally finite set of finite arcs. Let $p$ be an arbitrary integer. Suppose $\nexists x \in \BZ$ such that $(x,p) \in \fT.$ Then $\exists y' \in \BZ$ such that $(p-1,y') \in \fT.$
\end{Lemma}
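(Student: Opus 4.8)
The plan is to argue by contradiction: assume $\fT$ contains no arc of the form $(x,p)$ (this is the hypothesis) and, in addition, no arc of the form $(p-1,y)$, and then derive a contradiction — either a crossing pair inside $\fT$, or a violation of the hypothesis, or a violation of minimality. The strategy throughout is to feed carefully chosen ``missing'' arcs into the maximality of $\fT$: if a legal arc is not in $\fT$, it must cross something in $\fT$, and inspecting the two possible crossing patterns pins the crossing partner down very tightly.

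First I would apply this to the short arc $(p-2,p)$. Since no arc ends at $p$, it is not in $\fT$, so it crosses some arc of $\fT$; the two crossing patterns force the partner to be either $(r,p-1)$ with $r\le p-3$, or $(p-1,s)$ with $s\ge p+1$. The latter is an arc starting at $p-1$, excluded by assumption, so $\fT$ must contain at least one arc ending at $p-1$. Here is where local finiteness enters: there are only finitely many such arcs, so I may set $c=\min\{x\in\BZ : (x,p-1)\in\fT\}$ and fix the arc $(c,p-1)\in\fT$, noting $c\le p-3$.

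The crux is then to feed the arc $(c,p)$ into maximality. It is legal ($c\le p-2$) and not in $\fT$ (nothing ends at $p$), so it crosses some $(r',s')\in\fT$, and again there are exactly two patterns. If $c<r'<p<s'$, then $r'=p-1$ would give an arc out of $p-1$ (excluded), so $r'\le p-2$, and then $c<r'<p-1<s'$ says precisely that $(r',s')$ crosses the fixed arc $(c,p-1)$ — impossible. If $r'<c<s'<p$, then $s'=p-1$ would give an arc $(r',p-1)$ with $r'<c$, contradicting minimality of $c$, so $s'\le p-2$, and then $r'<c<s'<p-1$ again forces $(r',s')$ to cross $(c,p-1)$. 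Both branches are contradictions, so the assumption that no arc starts at $p-1$ must be abandoned.

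The step needing the most care is this final case analysis on the crossing partner of $(c,p)$: one must check that the two geometric cases are genuinely exhaustive and that in each the forced arc collides with one of the three things we are allowed to contradict (the hypothesis, the minimality of $c$, or the non-crossing property). The rest is routine bookkeeping: verifying that the auxiliary arcs $(p-2,p)$ and $(c,p)$ are legal, and that the set of arcs ending at $p-1$ is finite (which is exactly the one place local finiteness is used). Conceptually, all of this just records the fact that the triangle of the triangulation $\fT$ lying immediately above the boundary edge between $p-1$ and $p$ must have its apex strictly to the right of $p$, which is precisely an arc emanating from $p-1$.
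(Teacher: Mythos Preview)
Your proof is correct, but it takes a genuinely different route from the paper's. The paper argues constructively by splitting on whether any arc of the form $(p,x')$ exists: if not, the arc $(p-1,p+1)$ crosses nothing and so lies in $\fT$ by maximality; if so, local finiteness gives a longest such arc $(p,x'')$, and then $(p-1,x'')$ crosses nothing (any potential crossing partner would either contradict the hypothesis, contradict maximality of $x''$, or cross $(p,x'')$), so again maximality puts it in $\fT$. Thus the paper looks at arcs \emph{starting} at $p$ and builds the desired arc directly.

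You instead argue by contradiction and look at arcs \emph{ending} at $p-1$: feeding $(p-2,p)$ into maximality forces an arc $(r,p-1)$ to exist, local finiteness lets you pick the leftmost such arc $(c,p-1)$, and then feeding $(c,p)$ into maximality yields a crossing partner that in every case collides with either the hypothesis, the minimality of $c$, or the non-crossing of $\fT$. Your argument is slightly longer but just as clean, and your closing geometric remark about the triangle above the edge $\{p-1,p\}$ is exactly the right picture: since no arc ends at $p$, the apex cannot lie to the left, so it lies to the right and furnishes the arc out of $p-1$. Both proofs invoke local finiteness exactly once, but for different purposes (longest arc out of $p$ versus leftmost arc into $p-1$).
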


\begin{proof}
First, suppose $\not\exists x' \in \BZ$ such that $(p,x') \in \fT.$ Then there is neither an arc $(x,p)$ nor an arc $(p,x')$ in $\fT.$ Because no arcs end in $p,$ there is room for $(p-1,p+1)$ in the configuration. By maximality of $\fT,$ we must therefore have $(p-1,p+1) \in \fT.$ Therefore the claim is satisfied with $y' = p+1.$
Now suppose $\exists x' \in \BZ$ such that $(p,x') \in \fT.$ Because $\fT$ is locally finite (and hence, in particular, does not contain a right fountain) there is a longest arc of the form $(p,r) : r \in \BZ,$ say, $(p, x'').$ Now, there is room in the configuration for $(p-1,x'')$ because there are no arcs of the form $(x,p) : x \in \BZ$ to ``block'' such an arc. And because $\fT$ is maximal, we must therefore have that $(p-1,x'') \in \fT.$ Hence the claim is again satisfied with $y' = x''.$     
\end{proof}

\noindent By symmetry, we have the following corollary.

\begin{Corollary} \label{cor:qplus1}
Let $\fT$ be a maximal, non-crossing, locally finite set of finite arcs. Suppose $\not\exists x \in \BZ$ such that $(q,x) \in \fT.$ Then $\exists y' \in \BZ$ such that $(y',q+1) \in \fT.$
\end{Corollary}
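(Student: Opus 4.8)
The plan is to deduce Corollary~\ref{cor:qplus1} from Lemma~\ref{lem:pminus1} by exploiting the left--right reflection symmetry of the whole arc picture, exactly as the phrase ``by symmetry'' preceding the statement suggests. Concretely, I would introduce the involution $\rho\colon\BZ\to\BZ$, $n\mapsto -n$, of the number line, and observe that it induces a bijection on the set of all finite arcs via $(i,j)\mapsto(-j,-i)$; this is well defined since $-j\leq -i-2$ is equivalent to $i\leq j-2$. For a set $\fT$ of finite arcs write $\rho(\fT)=\{(-j,-i)\mid (i,j)\in\fT\}$.

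First I would check that $\rho$ transports all three hypotheses. It preserves non-crossing, because two finite arcs $(i,j),(r,s)$ cross iff $i<r<j<s$ or $r<i<s<j$, and applying $\rho$ simply reverses each such chain of strict inequalities, so $(-j,-i)$ and $(-s,-r)$ cross iff $(i,j)$ and $(r,s)$ do. It preserves maximality, because $\rho$ is an involutive bijection of the ambient set of all finite arcs carrying non-crossing sets to non-crossing sets, hence carrying maximal ones to maximal ones. And it preserves local finiteness, because the arcs of the form $(n,\text{---})$ in $\rho(\fT)$ correspond bijectively to the arcs of the form $(\text{---},-n)$ in $\fT$, and symmetrically; so the two finiteness conditions defining local finiteness for $\rho(\fT)$ are just those for $\fT$ with the roles of left and right endpoints swapped.

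Then I would translate the hypothesis and conclusion through $\rho$. The hypothesis ``$\nexists x\in\BZ$ with $(q,x)\in\fT$'' says no finite arc of $\fT$ begins at $q$, which under $\rho$ says no finite arc of $\rho(\fT)$ ends at $-q$, i.e.\ $\nexists x\in\BZ$ with $(x,-q)\in\rho(\fT)$. Now apply Lemma~\ref{lem:pminus1} to the maximal, non-crossing, locally finite set $\rho(\fT)$ with the integer $p=-q$: it yields some $y'\in\BZ$ with $(p-1,y')=(-q-1,y')\in\rho(\fT)$. Unwinding $\rho$ gives $(-y',q+1)\in\fT$, so $-y'$ is the desired witness. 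The only real work is the bookkeeping in the paragraph above — and within that, maximality is the point that needs a moment's thought, since it relies on $\rho$ being a bijection of the full set of finite arcs rather than a mere injection of $\fT$. Nothing here is deep: the mathematical content of the corollary is entirely contained in Lemma~\ref{lem:pminus1}, of which it is the mirror image.
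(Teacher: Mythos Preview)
Your proof is correct and is precisely a careful unpacking of the paper's own ``by symmetry'' remark: the paper gives no further argument, and your reflection $\rho(n)=-n$ is exactly the symmetry intended. There is nothing to add.
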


\begin{Lemma} \label{lem:pqnotlongest}
Let $\fT$ be a maximal, non-crossing set of finite arcs. Let $(p,q)$ be the longest arc in $\fT$ of the form $(p,r) : r \in \BZ$ and also the longest arc in $\fT$ of the form $(l,q) : l \in \BZ.$ Then $\fT$ has a right fountain.
\end{Lemma}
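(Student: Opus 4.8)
The plan is to prove the stronger statement that $q$ itself is a right fountain of $\fT$; this immediately gives the lemma.

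First I would record two consequences of the hypotheses: because $(p,q)$ is the longest arc of the form $(p,r)$, no arc $(p,r)\in\fT$ has $r>q$; and because $(p,q)$ is the longest arc of the form $(l,q)$, no arc $(l,q)\in\fT$ has $l<p$. Since $q\geq p+2$, the pair $(p,q+1)$ is a legitimate arc, and by the first observation it is not in $\fT$; so by maximality it must cross some arc of $\fT$. A routine check of the crossing condition — using that such a crossing arc may not itself cross $(p,q)$ and may not be one of the forbidden arcs ending at $q$ — shows the only possibility is an arc of the form $(q,b)$ with $b\geq q+2$. Hence $\fT$ contains at least one arc with left endpoint $q$.

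Now suppose, for contradiction, that $q$ is not a right fountain. Then only finitely many arcs of $\fT$ have left endpoint $q$, and by the previous paragraph we may choose such an arc $(q,B)$ with $B$ as large as possible; note $B\geq q+2$. Again $(p,B)$ is a legitimate arc not lying in $\fT$, so it crosses some $(a,b)\in\fT$. Running through the crossing condition as before — using that $(a,b)$ may not cross $(p,q)$ and that no arc of $\fT$ ends at $q$ from the left of $p$ — I would show $(a,b)$ must be of one of three types: (i) $(q,b)$ with $b>B$; (ii) an arc with $q<a<B<b$; or (iii) a strong overarc of $(p,q)$ with $a<p<q<b<B$. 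Type (i) contradicts the maximality of $B$. In type (ii) the inequalities $q<a<B<b$ say precisely that $(a,b)$ crosses $(q,B)$, and in type (iii) the inequalities $a<q<b<B$ say precisely that $(a,b)$ crosses $(q,B)$; either way this contradicts that $\fT$ is non-crossing. So no arc of $\fT$ is crossed by $(p,B)$, contradicting the maximality of $\fT$. Therefore $q$ is a right fountain, and the lemma follows.

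The two case analyses — which arcs $(p,q+1)$, respectively $(p,B)$, can possibly cross — are elementary interval arithmetic with the definition of crossing; the only care needed is to keep track of shared-endpoint configurations, which by definition are not crossings. The one genuinely pointed step, as opposed to bookkeeping, is the decision to test the arc $(p,B)$ for $B$ the largest right endpoint of an arc issuing from $q$: this is exactly what turns the hypothetical absence of a right fountain at $q$ into an arc that can neither belong to $\fT$ nor cross an arc of $\fT$, violating maximality.
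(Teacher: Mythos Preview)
Your proof is correct and follows essentially the same strategy as the paper: assume $q$ is not a right fountain, take the longest arc $(q,B)$ starting at $q$, observe that $(p,B)\notin\fT$, and derive a contradiction by analysing which arcs could block it. Your case analysis of the possible blocking arcs is more explicit than the paper's, but the content is the same.

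The one genuine difference is how you rule out the possibility that \emph{no} arc at all starts at $q$. The paper treats this as a separate case and invokes the earlier Corollary on arcs at $q+1$; you instead test the arc $(p,q+1)$ directly and show any blocking arc must have the form $(q,b)$. Your argument is more self-contained and avoids citing a result whose stated hypotheses (local finiteness) are not available here, which is a small gain in cleanliness.
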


\begin{proof}
Suppose $q$ is not a right fountain. Then either

(i) there is no arc of the form $(q,s) : s \in \BZ,$ or

(ii) there is an arc of the form $(q,s) : s \in \BZ$ and hence a longest arc of this form, say, $(q,s').$

In case (i), there is an arc in $\fT$ of the form $(l',q+1) : l \in \BZ$ by Corollary \ref{cor:qplus1}. By assumption, $l' \neq p,$ else $(p,q+1)$ is an arc in $\fT$ longer than $(p,q).$ So there are two arcs $(l',q+1)$ and $(p,q)$ in $\fT.$ Now, $(p,q+1) \not\in \fT$ by assumption, whence $(l',q) \in \fT$ by maximality of $\fT$: a contradiction.

In the second case (ii), there is a longest arc $(q,s') \in \fT.$ By assumption, $(p,s')\not\in\fT$ and by maximality of $\fT$ the only way this is possible if there is an arc of the form $(l',q)$ with $l'<p,$ because then the arc $(p,s')$ would be ``blocked''. In this case, $(l',q)$ is longer then $(p,q)$: a contradiction.

Now suppose $q$ is a right fountain. Then $\fT$ has a right fountain: $q$ itself.
\end{proof}

\begin{Lemma} \label{lem:locallyfinitemainclaim}
Let $\fT$ be a maximal, non-crossing, locally finite set of finite arcs. Let $(p,q) \in \fT.$ Then $\exists (x,y) \in \fT$ with $x<p<q<y.$
\end{Lemma}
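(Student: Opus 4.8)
The plan is to exhibit the strong overarc directly, by ``extending'' $(p,q)$ first as far to the right as local finiteness allows and then as far to the left as possible, and then to invoke Lemma~\ref{lem:pqnotlongest} to see that the left extension must strictly overshoot $p$.

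First I would record that a locally finite set of finite arcs has no right-fountain and no left-fountain (immediate from Definitions~\ref{def:fountains} and~\ref{def:locallyfiniteconfig}); in particular, for every integer $n$ only finitely many arcs of $\fT$ begin at $n$ and only finitely many end at $n$. Hence there is a well-defined largest right endpoint $q^{+}$ of an arc of $\fT$ of the form $(p,r)$, and it satisfies $q^{+}\ge q$ with $(p,q^{+})\in\fT$; and then there is a well-defined smallest left endpoint $p^{-}$ of an arc of $\fT$ of the form $(l,q^{+})$, satisfying $p^{-}\le p$ with $(p^{-},q^{+})\in\fT$. By the choice of $p^{-}$, the arc $(p^{-},q^{+})$ is the longest arc of $\fT$ of the form $(l,q^{+})$.

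Next I would argue that $(p^{-},q^{+})$ cannot also be the longest arc of $\fT$ of the form $(p^{-},r)$: if it were, Lemma~\ref{lem:pqnotlongest} applied to $(p^{-},q^{+})$ would supply a right-fountain of $\fT$, contradicting local finiteness. So there is an arc $(p^{-},r)\in\fT$ with $r>q^{+}$. Now if $p^{-}$ were equal to $p$, we would have two arcs $(p,q^{+})$ and $(p,r)$ in $\fT$ with $r>q^{+}$, contradicting the maximality of $q^{+}$ among right endpoints of arcs of $\fT$ starting at $p$; hence $p^{-}<p$. Combining the inequalities, $(p^{-},r)\in\fT$ satisfies $p^{-}<p<q\le q^{+}<r$, so $(p^{-},r)$ is the desired arc with $x=p^{-}<p<q<y=r$.

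The one point that needs care is the \emph{strictness} $p^{-}<p$ rather than merely $p^{-}\le p$; this is exactly why one extends to the right first, and maximally, so that the alternative $p^{-}=p$ collides with the maximality of $q^{+}$. The substantive input is Lemma~\ref{lem:pqnotlongest} together with the fact that local finiteness forbids fountains; everything else is bookkeeping about endpoints. (One could instead argue geometrically, repeatedly passing to the triangle on the far side of the current arc and noting that an infinite run of same-side extensions would create a fountain, but the argument above is shorter given the lemmas already available.)
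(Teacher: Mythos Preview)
Your proof is correct and follows essentially the same strategy as the paper's: extend $(p,q)$ maximally in one direction, invoke Lemma~\ref{lem:pqnotlongest} to force a strict further extension, and use local finiteness throughout to rule out fountains. The paper organizes this via an initial case split on which side of $(p,q)$ admits a longer arc (then appeals to symmetry and extends right, then left), whereas you avoid the split by extending right, then left, then right once more and deducing $p^{-}<p$ from the maximality of $q^{+}$; the two arguments are equivalent in substance.
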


\begin{proof}
By Lemma \ref{lem:pqnotlongest}, if $(p,q)$ is both the longest arc in $\fT$ of the form  $(p,r) : r \in \BZ$ and also the longest arc in $\fT$ of the form $(l,q) : l \in \BZ,$ then $\fT$ has a right fountain and in particular is not locally finite. So either

(i) there is an arc $(p,q')$ in $\fT$ with $q'>q,$ or

(ii) there is an arc $(p',q)$ in $\fT$ with $p'<p,$

but not both, else $(p',q)$ and $(p,q')$ cross. Notice that, if the claim is proved assuming (i) holds, then by symmetry, the claim would be proved if it were the case that (ii) holds. So suppose it is (i) that holds. Note that $p$ is not a right fountain, because $\fT$ is locally finite. So there is a longest arc of the form $(p,r) : r \in \BZ,$ say, $(p,y).$ By assumption, $y \neq q.$ Because $(p,y)$ is the longest arc of the form $(p,r) : r \in \BZ,$ and $\fT$ is locally finite, $(p,y)$ is not the longest arc of the form $(u,y) : u \in \BZ,$ and because $y$ is not a left fountain, there is a longest arc $(x,y) \in \fT$ of this form. Hence, if (i) holds, we are done: $(x,y) \in \fT$ with $x<p<q<y,$ and as noted before, if (i) proves the claim then so does (ii) by symmetry.
\end{proof}

\begin{Corollary} \label{cor:arbitrarilylongarc}
Let $\fT$ be a maximal, non-crossing, locally finite set of finite arcs. Let $(p,q) \in \fT.$ Then there exists a strong overarc $(p-\varepsilon,q+\delta)$ in $\fT$ with $\varepsilon, \delta$ arbitrarily large integers.
\end{Corollary}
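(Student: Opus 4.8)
The plan is to iterate Lemma \ref{lem:locallyfinitemainclaim}. That lemma, applied to \emph{any} arc of $\fT$, produces a strictly longer arc of $\fT$ that strictly surrounds it; and since the hypotheses ``maximal, non-crossing, locally finite'' are properties of $\fT$ rather than of the chosen arc, the lemma can be fed its own output repeatedly. This yields an infinite nested chain of arcs in $\fT$ whose endpoints march off to $\pm\infty$, which is exactly what is needed.

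Concretely, I would construct by induction a sequence $(x_k,y_k)_{k\geq 0}$ of arcs in $\fT$ with $(x_0,y_0)=(p,q)$ and $x_{k+1}<x_k<y_k<y_{k+1}$ for every $k\geq 0$. The base case is just the hypothesis $(p,q)\in\fT$. For the inductive step, given $(x_k,y_k)\in\fT$, apply Lemma \ref{lem:locallyfinitemainclaim} to the arc $(x_k,y_k)$ (the set $\fT$ is unchanged, so the lemma applies) to obtain some $(x_{k+1},y_{k+1})\in\fT$ with $x_{k+1}<x_k<y_k<y_{k+1}$.

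Finally I would read off the conclusion. The sequence $(x_k)_{k\geq 1}$ is a strictly decreasing sequence of integers with $x_1\leq p-1$, so $x_k\leq p-k$; symmetrically $y_k\geq q+k$. Hence for each $k\geq 1$ the arc $(x_k,y_k)\in\fT$ satisfies $x_k<p<q<y_k$, so it is a strong overarc of $(p,q)$ in the sense of Definition \ref{def:strongoverarc}, and putting $\varepsilon=p-x_k\geq k$ and $\delta=y_k-q\geq k$ exhibits it as a strong overarc $(p-\varepsilon,q+\delta)$ with $\varepsilon,\delta\geq k$. Letting $k\to\infty$ gives strong overarcs with $\varepsilon,\delta$ arbitrarily large.

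There is no real obstacle here beyond Lemma \ref{lem:locallyfinitemainclaim} itself; the only point deserving a word of care is the legitimacy of iterating that lemma, which holds precisely because its hypotheses constrain $\fT$ and not the particular arc under consideration, so they persist unchanged through the induction.
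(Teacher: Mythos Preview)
Your proposal is correct and follows essentially the same approach as the paper: iterate Lemma~\ref{lem:locallyfinitemainclaim}, feeding each strong overarc back in to obtain a strictly larger one, and conclude that the resulting nested sequence has endpoints tending to $\pm\infty$. Your version is in fact more explicit than the paper's, spelling out the inductive construction and the bounds $x_k\leq p-k$, $y_k\geq q+k$, but the underlying argument is identical.
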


\begin{proof}
By Lemma \ref{lem:locallyfinitemainclaim}, $(p,q)$ has a strong overarc in $\fT.$ But any strong overarc of $(p,q)$ will itself have a strong overarc in $\fT$ - which itself will be an (even longer) overarc of $(p,q).$ This can be continued indefinitely with the overarcs of $(p,q)$ becoming arbitrarily long, hence proving the claim. 
\end{proof}

\begin{Corollary} \label{cor:locallyfinitesecondclaim}
Let $\fT$ be a maximal, non-crossing, locally finite set of finite arcs. Let $h \in \BZ.$ Then $\exists (x,y) \in \fT$ with $x<h<y.$
\end{Corollary}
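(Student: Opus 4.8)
The plan is a short case analysis on whether the integer $h$ is an endpoint of some arc of $\fT$, in each case producing an arc of $\fT$ that straddles $h$.

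If $\fT$ contains an arc $(h,r)$ (necessarily $r\ge h+2$), then I would apply Lemma \ref{lem:locallyfinitemainclaim} to $(h,r)$ to obtain $(x,y)\in\fT$ with $x<h<r<y$; in particular $x<h<y$, as wanted. Symmetrically, if $\fT$ contains an arc $(l,h)$ (necessarily $l\le h-2$), Lemma \ref{lem:locallyfinitemainclaim} applied to $(l,h)$ yields $(x,y)\in\fT$ with $x<l<h<y$, so again $x<h<y$.

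The remaining case is that no arc of $\fT$ has $h$ as an endpoint. Here I would observe that the finite arc $(h-1,h+1)$ crosses no arc of $\fT$: if $(x,y)\in\fT$ crossed $(h-1,h+1)$, then by the definition of crossing either $h-1<y<h+1$, forcing $y=h$, or $h-1<x<h+1$, forcing $x=h$, and either possibility contradicts the assumption that $h$ is not an endpoint of any arc of $\fT$. Since $\fT$ is maximal, it follows that $(h-1,h+1)\in\fT$, and then $x=h-1<h<h+1=y$ exhibits the required strong overarc of $h$.

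The only delicate point is checking that these three cases are exhaustive and that, in the last one, $(h-1,h+1)$ really is forced into $\fT$ by maximality once one knows it crosses nothing in $\fT$; everything else is an immediate application of Lemma \ref{lem:locallyfinitemainclaim} (which is where local finiteness of $\fT$ enters, via Corollary \ref{cor:qplus1} and Lemma \ref{lem:pqnotlongest} inside its proof) together with the maximality and non-crossing hypotheses. I expect no genuine obstacle: this corollary is essentially the ``integer version'' of Lemma \ref{lem:locallyfinitemainclaim}.
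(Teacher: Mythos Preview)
Your argument is correct, but the paper takes a different, case-free route. Rather than splitting on whether $h$ is an endpoint, the paper picks any arc $(p,q)\in\fT$ and invokes Corollary~\ref{cor:arbitrarilylongarc} (which iterates Lemma~\ref{lem:locallyfinitemainclaim}) to obtain a strong overarc $(x,y)$ of $(p,q)$ long enough in both directions that $x<h<y$. Your approach avoids Corollary~\ref{cor:arbitrarilylongarc} entirely: one application of Lemma~\ref{lem:locallyfinitemainclaim} handles the endpoint cases, and a direct maximality argument produces $(h-1,h+1)\in\fT$ in the remaining case. This is slightly more elementary and self-contained, at the cost of a short case analysis; the paper's version is more uniform but leans on the preceding corollary. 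Both are perfectly valid one-paragraph proofs.
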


\begin{proof}
Let $(p,q) \in \fT$ be any arc in $\fT.$ Then by Corollary \ref{cor:arbitrarilylongarc}, there is a (potentially very long) strong overarc $(x,y)$ of $(p,q)$ in $\fT$ with 
$$h \in \{x+1,\cdots,p,p+1,\cdots,q,q+1,\cdots,y-1\},$$
which proves the claim. 
\end{proof}

\begin{Theorem}\label{pro:maximal}
Let $\cT$ be a a subcategory of $\Dbar$ which is closed under direct sums and direct summands, so it corresponds to a set $\fT$ of arcs (finite and infinite). Then $\cT$ is weakly cluster tilting if and only if one of the following happens: \\
(1) $\fT$ is a set of finite arcs which is maximal, non-crossing and locally finite in the sense of \citep[Section 3]{J}, or \\
(2) $\fT = \{(m,\infty)\} \cup \fT'$ where $\fT'$ is a set of finite arcs which is maximal non-crossing and has a fountain in the sense of \citep[Section 3]{J} (and the fountain is necessarily at $m$).
\end{Theorem}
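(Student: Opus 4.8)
\emph{Proof strategy.} Everything rests on the correspondence between crossing of arcs and non-vanishing of $\hmD(-,\Sigma-)$: for finite arcs $\fa,\fb$ with objects $a,b$ one has $\hmD(a,\Sigma b)\cong\hmD(b,\Sigma a)\cong k$ when $\fa$ and $\fb$ cross and $0$ otherwise (Lemma~3.6 of \citep{J}); for a finite arc $\fY$ and $\fE_n=(-n-2,\infty)$ one has $\hmD(Y,\Sigma\sE_n)\cong\hmD(\sE_n,\Sigma Y)\cong k$ exactly when $\fE_n$ and $\fY$ cross (Proposition~\ref{pro:infinitearc}); and $\hmD(\sE_{m'},\Sigma\sE_{n'})$ and $\hmD(\sE_{n'},\Sigma\sE_{m'})$ both vanish if and only if $m'=n'$ (Remark~\ref{rmk:nonsymhoc}).

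For the ``if'' direction the plan is to verify the defining property of weak cluster tilting directly. Suppose first $\fT$ is as in~(1). For a finite object $X$ with arc $\fX$: if $\fX\in\fT$, non-crossing gives $\hmD(\cT,\Sigma X)\cong\hmD(X,\Sigma\cT)\cong 0$; if $\fX\notin\fT$, maximality produces an arc of $\fT$ crossing $\fX$, so $\hmD(\cT,\Sigma X)\neq 0$. For $X=\sE_n$ (which lies outside $\cT$), Corollary~\ref{cor:locallyfinitesecondclaim} applied to $h=-n-2$ gives an arc of $\fT$ straddling $-n-2$, i.e.\ crossing $\fE_n$, so $\hmD(\cT,\Sigma\sE_n)\neq 0$. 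Suppose now $\fT=\{(m,\infty)\}\cup\fT'$ is as in~(2), so $\cT$ contains $\sE_{-m-2}$ (note $\fE_{-m-2}=(m,\infty)$) and the finite objects of $\fT'$. The fountain at $m$ forces that no arc of $\fT'$ straddles $m$ (a straddling arc would cross an arc $(m,r)\in\fT'$ with $r$ large), whence $\hmD(X,\Sigma\sE_{-m-2})\cong\hmD(\sE_{-m-2},\Sigma X)\cong 0$ for every finite $X\in\cT$; together with non-crossing of $\fT'$ and $\hmD(\sE_{-m-2},\Sigma\sE_{-m-2})\cong 0$ this settles the vanishing for objects of $\cT$. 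If $\sE_n\notin\cT$ then $n\neq -m-2$ and Remark~\ref{rmk:nonsymhoc} forces one of $\hmD(\sE_{-m-2},\Sigma\sE_n)$, $\hmD(\sE_n,\Sigma\sE_{-m-2})$ to be nonzero; if $X$ is finite with $\fX\notin\fT$, then either $\fX$ straddles $m$ (so $\fX$ crosses $(m,\infty)$) or, by maximality of $\fT'$, $\fX$ crosses a finite arc of $\fT'$. In every case the required nonvanishing holds, so $\cT$ is weakly cluster tilting.

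For the ``only if'' direction, let $\cT$ be weakly cluster tilting with arc set $\fT$. By Proposition~\ref{pro:wct}, $\fT$ contains at most one infinite arc, and I would split accordingly. If $\fT$ contains no infinite arc, then $\fT$ is non-crossing (crossing arcs of $\fT$ would give a nonzero map from an object of $\cT$ to $\Sigma$ of another object of $\cT$) and maximal (a finite arc crossing no arc of $\fT$ determines a finite object with vanishing $\operatorname{Hom}$'s to and from $\cT$, hence lying in $\cT$); and $\fT$ must be locally finite, for otherwise $\fT$ would have a left- or right-fountain at some $h\in\BZ$, no arc of $\fT$ would straddle $h$, so $\fE_{-h-2}=(h,\infty)$ would cross no arc of $\fT$, and then $\sE_{-h-2}$ would have vanishing $\operatorname{Hom}$'s to and from $\cT$, forcing $\sE_{-h-2}\in\cT$ --- impossible. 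Thus $\fT$ is of type~(1). If instead $\fT=\{(m,\infty)\}\cup\fT'$ with $\fT'$ a set of finite arcs and $\sE_{-m-2}$ the unique hocolimit in $\cT$, then $\fT'$ is non-crossing and, since $\sE_{-m-2}\in\cT$, no arc of $\fT'$ straddles $m$; moreover the weak cluster tilting property is exactly the hypothesis of Lemma~\ref{lem:noncross} (a finite arc crossing neither $(m,\infty)$ nor an arc of $\fT'$ gives a finite object with vanishing $\operatorname{Hom}$'s to and from $\cT$, hence lies in $\fT'$). Lemma~\ref{lem:noncross} then supplies a left-fountain $p\leq m$ and a right-fountain $q\geq m$ of $\fT'$, with $p\in\{m-1,m\}$ or $(p,m)\in\fT$, and symmetrically for $q$. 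From here one must show that $\fT'$ is a maximal non-crossing set of finite arcs with a (necessarily unique) fountain at $m$; I would do this by ruling out the alternatives $p\neq m$ and $q\neq m$ using Lemma~\ref{lem:pqnotlongest} and the non-straddling of $m$ (e.g.\ if $(p,m)\in\fT'$ with $p<m$ then $(p,m)$ is the longest arc from $p$ and the longest arc to $m$, and one plays this against the left-fountain at $p$ and the blocking arc $(m,\infty)$), thereby identifying $m$ as both a left- and a right-fountain; then maximality of $\fT'$ is automatic, since every arc straddling $m$ crosses some arc $(m,r)\in\fT'$ with $r$ arbitrarily large. This places $\fT$ in type~(2).

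I expect the main obstacle to be exactly this last identification of $m$ as a two-sided fountain: Lemma~\ref{lem:noncross} leaves open a left-fountain at some $p<m$ with $(p,m)\in\fT'$ (and symmetrically on the right), and excluding it requires interlocking the combinatorics of the fountains with the maximality that the weak cluster tilting property and the presence of the blocking arc $(m,\infty)$ force on $\fT'$ --- essentially a chicken-and-egg between ``$\fT'$ is maximal'' and ``$\fT'$ has its fountain at $m$'', each of which quickly implies the other. All remaining ingredients --- the crossing dictionary, the reduction of weak cluster tilting to the hypothesis of Lemma~\ref{lem:noncross}, Proposition~\ref{pro:wct}, and the non-straddling arguments --- are routine.
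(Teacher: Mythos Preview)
Your overall architecture matches the paper's: split on the number of infinite arcs, use the crossing dictionary, and for the ``only if'' with one infinite arc reduce to the hypothesis of Lemma~\ref{lem:noncross} to extract a left-fountain $p\leq m$ and a right-fountain $q\geq m$. The ``if'' direction and the ``no hocolimit'' case are essentially the paper's arguments (your treatment of local finiteness via a one-sided fountain is in fact slightly sharper than the paper's wording).

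The gap is exactly where you flag it: forcing $p=m=q$. Your plan is to invoke Lemma~\ref{lem:pqnotlongest} after observing that $(p,m)$ is the longest arc from $p$ and the longest arc to $m$. But Lemma~\ref{lem:pqnotlongest} (and the lemmas feeding into it) is stated for a \emph{maximal} non-crossing set of finite arcs, whereas at this point you only know the weaker ``maximal among arcs not crossing $(m,\infty)$''. Even granting the lemma, its conclusion is just that $\fT'$ has a right-fountain --- which you already know is $q$ --- so no contradiction drops out. The chicken-and-egg you describe is real, and the combinatorics of finite arcs alone does not resolve it.

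The paper sidesteps the circularity by going back to the Pr\"{u}fer objects. If $p<m$, then the left-fountain at $p$ forces $p$ to have no strong overarc in $\fT'$, so the infinite arc $(p,\infty)$ crosses nothing in $\fT'$; hence $(\cT',\Sigma\sE_{-p-2})=0$. Moreover $(\sE_{-m-2},\Sigma\sE_{-p-2})=0$ because $p<m$ (Theorem~\ref{pro:homsbetweenhocolims}). Thus $(\cT,\Sigma\sE_{-p-2})=0$, and weak cluster tilting puts $\sE_{-p-2}\in\cT$, contradicting Proposition~\ref{pro:wct}. Symmetrically $q=m$, and then maximality of $\fT'$ follows immediately from the fountain at $m$. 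In other words, the missing idea is to test the weak cluster tilting condition not on another finite arc but on the second Pr\"{u}fer object $\sE_{-p-2}$; the asymmetry of $\hmD(\sE_{\bullet},\Sigma\sE_{\bullet})$ is precisely what makes this work.
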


\begin{proof} \*{} \\
$[\Rightarrow]$ Suppose that $\cT$ is a weakly cluster tilting subcategory of $\Dbar$, i.e., $\cT = \{t \hspace{1.5mm} | \hspace{1.5mm} (t,\Sigma \cT) = 0 \}$ and $\cT = \{t \hspace{1.5mm} | \hspace{1.5mm} (\cT, \Sigma t) = 0 \}$. By Proposition \ref{pro:wct}, $\cT$ contains either no $\sE_n$ or it contains precisely one $\sE_n.$

If $\cT$ contains no $\sE_n$, then $\fT$ is a set of finite arcs. The set $\fT$ is clearly non-crossing, because if any two arcs in $\fT$ cross, say $\fa$ and $\fb,$ then the objects associated with these arcs have a non-vanishing Ext, contradicting the fact that $\cT = \{t \hspace{1.5mm} | \hspace{1.5mm} (t,\Sigma \cT) = 0 \}.$ The set $\fT$ is also maximal, because if $\fa$ crosses no arc in $\fT$, then $(a,\Sigma \cT) = 0,$ whence $a \in \cT$ so $\fa \in \fT.$ Note that $\fT$ couldn't have a fountain at $-m-2$, because if it did, then $(-m-2,\infty)$ would cross no arc in $\fT$ whence $(\sE_m,\Sigma \cT) = 0,$ so $\sE_m \in \cT$ and $(-m-2,\infty) \in \fT$ which contradicts the fact that $\fT$ is a set of finite arcs. Altogether this gives that $\fT$ is a maximal, non-crossing, locally finite set of finite arcs.

Suppose that $\cT$ contains a single $\sE_n$, so $\fT$ contains a single infinite arc, i.e. $\fT = \{(m,\infty)\}\cup \fT'$ where $\fT'$ consists of finite arcs. Define $\cT'$ to be the category with objects in the additive hull of the corresponding (finite) indecomposables. We know that if a finite arc $\fa$ crosses neither $(m,\infty)$ nor an arc in $\fT'$, then $\fa \in \fT'$, because $\cT$ is weakly cluster tilting. Hence, by the first part of Lemma \ref{lem:noncross}, $\fT'$ has a left-fountain (call it $p$) and a right-fountain (call it $q$). In particular, since $\cT$ is weakly cluster tilting, no two arcs in $\fT$ can cross, so $p \leq m \leq q$. Now suppose $p<m$. Then by the second part of Lemma \ref{lem:noncross}, $p=m-1$ or $(p,m) \in \fT'.$ Since no two arcs in $\fT$ cross, $p$ has no strong overarc in $\fT'$. Now let $F = \sE_{-p-2} \in \Dbar$ be the object associated with the arc $(p,\infty).$ Since $p$ has no strong overarc in $\fT'$, we get $(p,\infty)$ crossing no arc in $\fT'$, so $(\cT',\Sigma F) = (F, \Sigma \cT') = 0$. However, we also have $(\sE_{-m-2},\Sigma F) = 0$ as $p < m.$ Hence $(\cT,\Sigma F) = 0$ so $F \in \{t \hspace{1.5mm} | \hspace{1.5mm} (\cT, \Sigma t) = 0 \} = \cT,$ which is a contradiction. So we conclude that $p=m$, and symmetrically, $q=m.$ Observe that in this case, $\fT'$ is in fact a maximal, non-crossing set of finite arcs. For if a finite arc $\fa$ crosses no arc in $\fT'$, then $\fa$ doesn't overarc $m$, because that's where $\fT'$ has a fountain. Hence $\fa$ also doesn't cross $(m,\infty).$ Therefore $\fa \in \fT$ so $\fa \in \fT'$.

It is impossible for $\fT$ to contain two or more infinite arcs. For if it did, they would correspond to different hocolims $\sE, \sE' \in \cT$, contradicting Proposition \ref{pro:wct}. \\ \\
 $[\Leftarrow]$ There are two cases. \\
Case (1). Suppose that $\fT$ is a set of finite arcs which is maximal, non-crossing and locally finite in the sense of \citep[Section 3]{J}. Then we show that $\cT = \{t \hspace{1.5mm} | \hspace{1.5mm} (t,\Sigma \cT) = 0 \}$ and $\cT = \{t \hspace{1.5mm} | \hspace{1.5mm} (\Sigma \cT,t) = 0 \}$. In fact it is enough just to consider the first of these equations,
\begin{equation} \label{equ:ctis}
\cT = \{t \hspace{1.5mm} | \hspace{1.5mm} (t,\Sigma \cT) = 0 \},
\end{equation}
as the second equation is symmetric. There are two inclusions to show in order to establish Equation (\ref{equ:ctis}). In each case it is enough to consider indecomposable objects.

The inclusion $\cT \subseteq \{t \hspace{1.5mm} | \hspace{1.5mm} (t,\Sigma \cT) = 0 \}$ follows because the arcs in $\fT$ do not cross.

For the inclusion $\cT \supseteq \{t \hspace{1.5mm} | \hspace{1.5mm} (t,\Sigma \cT) = 0 \}$, consider an indecomposable object $s \in \{t \hspace{1.5mm} | \hspace{1.5mm} (t,\Sigma \cT) = 0 \}$. We aim to show that $s \in \cT.$ There are two possibilities: either $s \in \cD$ or $s = \sE_h,$ for some $h \in \BZ$. If $s \in \cD,$ then $s \in \{t \hspace{1.5mm} | \hspace{1.5mm} (t,\Sigma \cT) = 0 \}$ means that the arc of $s$ crosses no arc in $\fT.$ Then the arc of $s$ is in $\fT$ and hence $s \in \cT.$ If, on the other hand, $s=\sE_h$, then by Corollary \ref{cor:locallyfinitesecondclaim}, $s \in \{t \hspace{1.5mm} | \hspace{1.5mm} (t,\Sigma \cT) = 0 \}$ cannot happen, because there will always be an arc in $\fT$ which crosses the arc associated with $\sE_h.$ \\
Case (2). Suppose that $\fT = \{(m,\infty)\} \cup \fT'$ where $\fT'$ is a set of finite arcs which is maximal non-crossing and has a fountain at $m$. Like before, we show that $\cT = \{t \hspace{1.5mm} | \hspace{1.5mm} (t,\Sigma \cT) = 0 \}$ and $\cT = \{t \hspace{1.5mm} | \hspace{1.5mm} (\cT,\Sigma t) = 0 \}$, and again it is enough just to consider the first of these equations as the second equation is symmetric. There are two inclusions to show in order to establish Equation (\ref{equ:ctis}). Again in each case it is enough to consider indecomposable objects.

The inclusion $\cT \subseteq \{t \hspace{1.5mm} | \hspace{1.5mm} (t,\Sigma \cT) = 0 \}$ follows because the arcs in $\fT$ do not cross.

For the inclusion $\cT \supseteq \{t \hspace{1.5mm} | \hspace{1.5mm} (t,\Sigma \cT) = 0 \}$, consider an indecomposable object $s \in \{t \hspace{1.5mm} | \hspace{1.5mm} (t,\Sigma \cT) = 0 \}$. We aim to show that $s \in \cT.$ There are two possibilities: either $s \in \cD$ or $s = \sE_h,$ for some $h \in \BZ$. If $s \in \cD,$ then $s \in \{t \hspace{1.5mm} | \hspace{1.5mm} (t,\Sigma \cT) = 0 \}$ means that the arc of $s$ crosses no arc in $\fT.$ Then, since $\fT'$ is maximal non-crossing, the arc of $s$ is in $\fT'$ and hence $s \in \cT.$ If $s=\sE_h$, then $s \in \{t \hspace{1.5mm} | \hspace{1.5mm} (t,\Sigma \cT)=0\}$ means that the arc of $\sE_h$ crosses no arc in $\fT'$ which has a fountain at $m$. Hence the arc of $\sE_h$ must be $(m,\infty),$ i.e., $h=-m-2,$ so $s=\sE_h \in \cT.$
\end{proof}

\section{Cluster tilting subcategories}
\label{sec:cts}

Leading on from the last section, this section concludes with a theorem stating when a subcategory of $\Dbar$ is cluster tilting. \noindent Right now we aim to show that $\cT$ is functorially finite if and only if $\fT$ has a fountain.

\begin{Definition}\label{ff}
Let $\cT$ be a subcategory of $\Dbar$. We say that $\cT$ is {\textit{right-approximating}} if for any $d \in \Dbar$ there exists a $\cT$-object $t$, and a right-$\cT$-approximation $\tau : t \rightarrow d.$ This means that for any $\tau' : t' \rightarrow d,$ there exists a factorisation
\begin{equation} \begin{aligned} \label{equ:rightapprox}
\xymatrix{
& t' \ar@{-->}[dl]_{\exists} \ar[d]^{\tau'} \\
t \ar[r]_{\tau} & d
}.
\end{aligned} \end{equation}
Analogously, $\cT$ is left-approximating if for any $\Dbar$-object $d$ there exists a left-approximation $\tau : d \rightarrow t.$ This means that for any $\cT$-object $t'$ and a morphism $\tau' : d \rightarrow t',$ there exists a factorisation
\begin{equation} \begin{aligned} \label{equ:leftapprox}
\xymatrix{
& t' \\
t \ar@{-->}[ur]^{\exists} & d \ar[l]^{\tau} \ar[u]_{\tau'}
}.
\end{aligned} \end{equation}
We say that $\cT$ is {\textit{functorially finite}} if and only if it is both right-approximating and left-approximating. \demo
\end{Definition}

\begin{Definition}\label{almostrightdef}
We say that a property holds for almost all indecomposables in $\cT$ if it holds for all but finitely many indecomposables.
An {\textit{almost-right-}}$\cT${\textit{-approximation}} of $d \in \Dbar$ is a morphism $\tau : t \rightarrow d$ such that for almost all indecomposables $t'$ in $\cT$, each morphism $\tau' : t' \rightarrow d$ factors through $\tau$. An almost-left-$\cT$-approximation is defined analogously. \demo
\end{Definition}

\begin{Lemma}\label{almostrightlem}
An almost-right-$\cT$-approximation of a $\Dbar$-object $d$ exists if and only if a right-$\cT$-approximation of $d$ exists (and similarly for the case of left-approximations).
\end{Lemma}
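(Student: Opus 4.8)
The plan is to prove the two implications separately: the direction ``right-$\cT$-approximation $\Rightarrow$ almost-right-$\cT$-approximation'' is immediate, while the converse is obtained by enlarging a given almost-approximation by a finite ``correction term''. For the easy direction, I would simply observe that if $\tau\colon t\to d$ is a right-$\cT$-approximation, then by definition \emph{every} morphism $t'\to d$ with $t'$ indecomposable in $\cT$ factors through $\tau$, and in particular all but finitely many do, so $\tau$ is automatically an almost-right-$\cT$-approximation. No further argument is needed here.

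For the converse, suppose $\tau\colon t\to d$ is an almost-right-$\cT$-approximation. The first step is to name the obstruction: let $S$ be the set of isomorphism classes of indecomposable objects $t'$ of $\cT$ admitting some morphism $t'\to d$ that does not factor through $\tau$; by hypothesis $S$ is finite. The second step uses that $\Dbar$ is Hom-finite (Section~\ref{sec:morphisms}): for each $t'\in S$ the space $\Hom_{\Dbar}(t',d)$ is finite-dimensional, so I can pick finitely many morphisms $\tau^{t'}_1,\dots,\tau^{t'}_{m_{t'}}\colon t'\to d$ that span it. The third step is the construction: set $u=\bigoplus_{t'\in S}(t')^{\oplus m_{t'}}$, assemble a morphism $\sigma\colon u\to d$ out of all the $\tau^{t'}_j$, and form $\tilde t:=t\oplus u$ with the morphism $(\tau,\sigma)\colon\tilde t\to d$. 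Since $t\in\cT$ and $\cT$ is closed under finite direct sums, $\tilde t\in\cT$, so $(\tau,\sigma)$ is a candidate right-$\cT$-approximation.

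The final step is to verify that $(\tau,\sigma)$ really is a right-$\cT$-approximation. Given $x\in\cT$ and $\varphi\colon x\to d$, I would use that $\Dbar$ is Krull--Schmidt to write $x=\bigoplus_\alpha x_\alpha$ with each $x_\alpha$ indecomposable (and in $\cT$, since $\cT$ is closed under summands), reduce to factoring each composite $x_\alpha\hookrightarrow x\xrightarrow{\varphi}d$ through $(\tau,\sigma)$, and reassemble. If the class of $x_\alpha$ is not in $S$, that composite factors through $\tau$ by the definition of $S$; if $x_\alpha\cong t'$ with $t'\in S$, the composite is, up to the isomorphism, a $k$-linear combination of the $\tau^{t'}_j$ and hence factors through the inclusion of the corresponding block of $u$. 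In either case it factors through $(\tau,\sigma)$. The left-approximation statement is handled by the dual argument, replacing $\Hom_{\Dbar}(-,d)$ by $\Hom_{\Dbar}(d,-)$ throughout.

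I do not expect a genuine obstacle here; the argument is formal. The points requiring attention are bookkeeping ones: that $\Dbar$ is simultaneously Hom-finite and Krull--Schmidt (so the ``bad'' Hom-spaces are finite-dimensional and every object of $\cT$ is a \emph{finite} direct sum of indecomposables), that $\cT$ being closed under finite direct sums and summands guarantees $\tilde t\in\cT$ and each $x_\alpha\in\cT$, and the elementary fact that a morphism out of a finite direct sum factors through a fixed map exactly when each of its restrictions to the summands does. The mildly delicate step is the reduction in the final paragraph, and that is where I would be most careful in writing out the details.
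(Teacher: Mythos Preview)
Your proposal is correct and follows essentially the same approach as the paper: enlarge the given almost-right-$\cT$-approximation by adjoining, for each of the finitely many ``bad'' indecomposables $t'$, enough copies of $t'$ to cover a spanning set of $\Hom_{\Dbar}(t',d)$. The paper's proof is considerably terser---it simply writes down the enlarged object $\widetilde{t}\oplus s_{1,1}\oplus\cdots\oplus s_{j,n_j}$ with $n_x=\dim\Hom(t_x,d)$ and leaves the verification to the reader---whereas you spell out both the Hom-finiteness and Krull--Schmidt ingredients and carry out the factoring check explicitly; this is added care rather than a different argument.
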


\begin{proof}
Given an almost-right-$\cT$-approximation $\widetilde{\tau} : \widetilde{t} \rightarrow d$, let $t_1,\dots,t_j$ be the finitely many indecomposables in $\cT$ for which morphisms to $d$ don't necessarily factor through $\widetilde{\tau}$. We get a right-$\cT$-approximation 
$$t = \widetilde{t}\oplus s_{1,1}\oplus \cdots \oplus s_{1,n_1} \oplus s_{2,1} \oplus \cdots \oplus s_{2,n_2} \oplus \cdots \oplus s_{j,1} \oplus \cdots \oplus s_{j,n_j}$$ where
$$\dim Hom(t_x,d) = n_x.$$ 
The opposite implication is seen instantly: by definition, right-$\cT$-approximations are themselves almost-right-$\cT$-approximations. The proof for left-approximations is done analogously.
\end{proof}

\begin{Remark}\label{rem:choice} 
In Notation \ref{not:choice}, we write hocolimits in terms of indecomposable objects; that is, $\sE_n$ is written as  $\hocolim_i(\Sigma^{n-i}X_i).$ We draw attention to the fact that we may use coordinate-pair notation, and may write, for example, $\hocolim(-n-2,\text{---})$ instead of  $\hocolim_i(\Sigma^{n-i}X_i).$ We like to be flexible and, to this end, may continue to write $\sE_n$ instead of $(-n-2,\infty).$ Before proving that $\cT$ is functorially finite if and only if $\fT$ has a fountain, we prove two important factorisation lemmas. \demo
\end{Remark}

\begin{Lemma}\label{lem:fact1}
Let $k \in \BN$ and let $\sE_{n+k}$ $=$ $\hocolim_i(\Sigma^{n+k-i}X_i).$ Let $z_1, z_2 \in \ind\cD$ where
$$z_1 = (y,-n) : y<-n-k-2,$$
$$z_2 = (x,-n+p) : y \leq x \leq -n-k-2 \hspace{2mm} \text{and} \hspace{2mm}  p \geq 0.$$ 
Then any nonzero morphism 
$$\tau' : z_1 \rightarrow \sE_{n+k}$$
factorises as
$$\xymatrix{
z_1 \ar[r]^{g} & z_2 \ar[r]^{\tau} & \sE_{n+k}.
}$$
\end{Lemma}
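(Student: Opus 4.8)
The plan is to exploit the colimit description of maps into a hocolimit together with the one-dimensionality results of Section \ref{sec:morphisms}. Throughout I work in the standard coordinates of \citep[Remark 1.4]{J}, under which $z_1 = \Sigma^{n}X_{-y-n-2}$ and $z_2 = \Sigma^{n-p}X_{-n+p-x-2}$; the hypotheses $y < -n-k-2 \leq x$ and $p \geq 0$ guarantee that these are genuine indecomposable objects of $\cD$ and that both lie in the wedge $\BW(\Sigma^{n+k}X_0)$. Hence, by Proposition \ref{pro:homsin}, $\Hom(z_1,\sE_{n+k}) \cong \Hom(z_2,\sE_{n+k}) \cong k$, so ``nonzero morphism $\tau'$'' is meaningful; fixing a nonzero $\tau : z_2 \to \sE_{n+k}$ it suffices to produce a nonzero $g : z_1 \to z_2$ with $\tau' = \tau\circ g$. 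A short coordinate check also shows $z_2 \in H^{+}(\Sigma z_1)$, so $\Hom_{\cD}(z_1,z_2) \cong k$ by \citep[Proposition 2.2]{J}; in particular a nonzero $g_0 : z_1 \to z_2$ exists.

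The crux is to show that the composite $\tau\circ g_0 : z_1 \to \sE_{n+k}$ is nonzero. Since $z_1,z_2$ are compact in $\sD(R)$, Lemma 2.8 of \citep{N} gives $\Hom(z,\sE_{n+k}) \cong \colim_i \Hom(z,\Sigma^{n+k-i}X_i)$ for $z \in \{z_1,z_2\}$; moreover, as $z_1,z_2 \in \BW(\Sigma^{n+k}X_0)$, the proof of Lemma \ref{lem:colim} shows each of these direct systems is eventually $k \xrightarrow{\sim} k \xrightarrow{\sim} \cdots$. Consequently, for every sufficiently large $i$, $\tau = \alpha_i\circ\tau_i$ with $\tau_i : z_2 \to \Sigma^{n+k-i}X_i$ nonzero and $\alpha_i$ the canonical map into the hocolimit, so $\tau\circ g_0 = \alpha_i\circ(\tau_i\circ g_0)$. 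Examining coordinates, for $i \geq k+p$ one has $z_2,\ \Sigma^{n+k-i}X_i \in H^{+}(\Sigma z_1)$ and $\Sigma^{n+k-i}X_i \in H^{+}(\Sigma z_2)$, and in this configuration \citep[Lemma 2.5]{J} — applied as in the proofs of Lemma \ref{lem:colim} and Theorem \ref{pro:homsbetweenhocolims} — yields $\tau_i\circ g_0 \neq 0$. Since the structure map $\Hom(z_1,\Sigma^{n+k-i}X_i) \to \Hom(z_1,\sE_{n+k})$ is injective for large $i$, it follows that $\tau\circ g_0 = \alpha_i\circ(\tau_i\circ g_0) \neq 0$.

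To conclude, $\Hom(z_1,\sE_{n+k})$ is one-dimensional and both $\tau'$ and $\tau\circ g_0$ are nonzero elements of it, so $\tau' = \lambda\,(\tau\circ g_0)$ for some $\lambda\in k^{\times}$. Then $g := \lambda g_0 : z_1 \to z_2$ is a nonzero morphism with $\tau' = \tau\circ g$, the asserted factorisation.

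The step I expect to be delicate is the application of \citep[Lemma 2.5]{J} in the second paragraph: one must identify precisely which terms $\Sigma^{n+k-i}X_i$ occur, verify the three $H^{+}$-membership conditions from the coordinate inequalities, and — since neither $g_0$ nor $\tau_i$ need be irreducible — either invoke that lemma in sufficient generality or first factor $g_0$ and $\tau_i$ into irreducible morphisms along slices of $\BZ A_\infty$ and iterate. Everything else is routine bookkeeping with the colimit and with one-dimensional Hom spaces.
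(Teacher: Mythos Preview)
Your argument is correct and is essentially the same as the paper's: both reduce to a finite stage of the hocolimit via compactness and then invoke \citep[Lemma 2.5]{J}. The only cosmetic difference is the order of operations. The paper first lifts $\tau'$ to some $\widetilde{\tau}:z_1\to\Sigma^{n+k-j}X_j$, uses \citep[Lemma 2.5]{J} to factor $\widetilde{\tau}$ through $z_2$, and then pushes the result forward along the canonical map to $\sE_{n+k}$; you instead fix $g_0$ and $\tau$ first and use \citep[Lemma 2.5]{J} to show that the composite $\tau_i\circ g_0$ at a finite stage is nonzero, whence $\tau\circ g_0\neq 0$ and one-dimensionality finishes. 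Since the Hom spaces are one-dimensional, ``factor through'' and ``composite is nonzero'' are equivalent statements, so the two arguments are interchangeable. Your worry about irreducibility is unnecessary: \citep[Lemma 2.5]{J} as used throughout the paper applies to arbitrary nonzero morphisms between indecomposables satisfying the stated $H^{+}$-conditions, which you have verified.
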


\begin{proof}
First, let us clarify the situation. The objects $z_1$ and $z_2$ are defined so as to satisfy $z_1, z_2 \in \BW(\Sigma^nX_0)$ and $\hmD(z_1,z_2) \neq 0.$ This is illustrated below.
\[ \def\objectstyle{\scriptstyle}
  \xymatrix @-5.4pc @! {
    & *{} & & *{} & & & & & *{} & *{} & *{} & \hspace{8mm} {\iddots}^{\sE_{n+k}}   & \\
    & & & & & & & & & & & & \\
    & & & & & *+[o][F]{z_1} \ar@{~}[uurr] \ar@{.}[uull] \ar@{~}[ddrr] & & *+[o][F]{z_2} \ar@{.}[uull] \ar@{.}[uurr] & & & & & \\
    & & & & & *{} & & & & & & & \\
    & & & & & & & *{} \ar@{~}[uuuurrrr] & & & & & \\
    *{} \ar@{--}[r] & \ar@{-}[r] & (y,y+2) \ar@{-}[rr] \ar@{.}[uuurrr] & & (x,x+2) \ar@{-}[rr] \ar@{.}[uuurrr]  & & (-n-k-2,-n-k) \ar@{.}[ur]  \ar@{-}[rr] & & (-n-2,-n) \ar@{.}[ul] \ar@{-}[rr] & & (-n+p-2,-n+p) \ar@{--}[rr] \ar@{.}[uuulll] & & *{} \\
           }
\]
Here, $z_2$ can be located anywhere within the region bounded by wavy lines. Now, the following diagram is natural in the variable $z.$
%this also needs a reference
\begin{equation} \begin{aligned} \label{equ:smallcommdiag} 
\xymatrix{
\colim_i(z,\Sigma^{n+k-i}X_i) \ar[rr]^{\cong} && (z,\sE_{n+k}) \\
& (z,\Sigma^{n+k-j}X_j) \ar[ur] \ar[ul] }
\end{aligned} \end{equation}
Hence, we may draw the following commutative diagram.
\begin{equation} \begin{aligned} \label{equ:bigcommdiag} 
\xymatrix{
\colim_i(z_2,\Sigma^{n+k-i}X_i) \ar[rr]^{\cong} \ar[dd] && (z_2,\sE_{n+k}) \ar[dd] \\
& (z_2,\Sigma^{n+k-j}X_j) \ar[ur] \ar[ul] \ar[dd] \\
\colim_i(z_1,\Sigma^{n+k-i}X_i) \ar[rr]^{\cong} & \ar[d] & (z_1,\sE_{n+k}) \\
& (z_1,\Sigma^{n+k-j}X_j) \ar[ur] \ar[ul] }
\end{aligned} \end{equation}
The object $\Sigma^{n+k-j}X_j$ has coordinate $(-n-k-2,-n-k+j).$

Now, let $\tau' \in (z_1,\sE_{n+k}).$ For $j$ sufficiently big there is a $\widetilde{\tau} \in (z_1,\Sigma^{n+k-j}X_j)$ which maps to $\tau'$ (found by diagram-chasing in Figure (\ref{equ:bigcommdiag})). That is, $\tau' = f \circ \widetilde{\tau}$ where $f : \Sigma^{n+k-j}X_j \rightarrow \sE_{n+k}$ is the canonical morphism. By Lemma 2.5 of \citep{J}, 
$$\xymatrix{z_1 \ar[r]^-{\widetilde{\tau}} & \Sigma^{n+k-j}X_j}$$
factors through 
$$\xymatrix{z_1 \ar[r]^{g} & z_2.}$$
Let the associated map from $z_2$ to $\Sigma^{n+k-j}X_j$ be called $\tau.$ Pictorially, we have the following.
\[ \def\objectstyle{\scriptstyle}
  \xymatrix @-4pc @! {
    & *{} & & *{} & & & & & *{} & *{} & *{} & & {{\sE_{n+k}}} \\
    & & & & & & & & & & & {\hspace{1mm} \iddots} & \\
    & & & & & & & & & & & & \\
    & & & & & {z_1} \ar@{.}[uuurrr] \ar@{.}[uuulll] \ar@/^0.30pc/[rr]^-{g} \ar@{.}[ddrr] \ar@/_1.60pc/[rrrr]_-{\widetilde{\tau}} \ar@/^1.20pc/[uuurrrrrrr]^-{\tau'} & & {z_2} \ar@{.}[uuulll] \ar@{.}[uuurrr] \ar@/^0.40pc/[rr]^-{\tau} & & {\Sigma^{n+k-j}X_j} \ar@{.}[uuulll] \ar@{.}[uurr] \ar@/_1.40pc/[uuurrr]_-{f} & & & \\
    & & & & & *{} & & & & & & & \\
    & & & & & & & *{} \ar@{.}[uurr] & & & & & \\
    *{} \ar@{--}[r] & \ar@{-}[r] & \Sigma^{-y-2}X_0 \ar@{-}[rr] \ar@{.}[uuurrr] & & \Sigma^{-x-2}X_0 \ar@{-}[rr] \ar@{.}[uuurrr]  & & \Sigma^{n+k}X_0 \ar@{.}[ur]  \ar@{-}[rr] & & \Sigma^nX_0 \ar@{.}[ul] \ar@{-}[rr] & & \Sigma^{n-p}X_0 \ar@{-}[rr] \ar@{.}[uuulll] & & \Sigma^{n-k-j}X_0 \ar@{.}[uuulll] \ar@{-}[r] \ar@{--}[rr] & & *{} \\
           }
\]

\noindent The commutative diagram (\ref{equ:bigcommdiag}) contains the following subdiagram.
$$\xymatrix{
(z_2,\sE_{n+k}) \ar[rrr]^-{(g,\sE_{n+k})} & & & (z_1,\sE_{n+k}) \\ \\
(z_2,\Sigma^{n+k-j}X_j) \ar[uu]^-{(z_2,f)} \ar[rrr]_-{(g,\Sigma^{n+k-j}X_j)} & & & (z_1,\Sigma^{n+k-j}X_j) \ar[uu]_-{(z_1,f)}
}
$$
Diagram-chasing yields the following.
$$\xymatrix{
f \circ \tau \ar@{|-{>}}[rrr]^-{(g,\sE_{n+k})} & & & f \circ \tau \circ g = \tau' \\ \\
\tau \ar@{|-{>}}[uu]^-{(z_2,f)} \ar@{|-{>}}[rrr]_-{(g,\Sigma^{n+k-j}X_j)} & & & \widetilde{\tau} \ar@{|-{>}}[uu]_-{(z_1,f)}
}
$$
Hence, $\tau'$ has a preimage in $(z_2,\sE_{n+k}),$ namely $f \circ \tau.$ Therefore $\tau'$ factors through $g : z_1 \rightarrow z_2,$ as required.
\end{proof}

\begin{Definition} \label{def:idnminusshift}
Let 
$$\xymatrix{ y_i \ar[rr]^-{\upsilon_i} && y_{i+1} \ar[rr]^-{\upsilon_{i+1}} && y_{i+2} \ar[rr]^-{\upsilon_{i+2}} && \cdots}$$
be a direct system of finite indecomposable objects in $\Dbar$. Then we define the map
$$\xymatrix{ \displaystyle\coprod_{k=i}^{\infty}y_k \ar[rr]^-{\text{id}-\text{shift}} && \displaystyle\coprod_{k=i}^{\infty}y_k}$$
by
$$\text{id}-\text{shift} = \begin{pmatrix} 1 & 0 & 0 & \cdots \\ -\upsilon_i & 1 & 0 & \cdots \\ 0 & -\upsilon_{i+1} & 1 & \cdots \\ 0 & 0 & -\upsilon_{i+2} & \cdots \\ \vdots & \vdots & \vdots & \ddots  \end{pmatrix}.$$ 
Note that the map ``$\text{id}-\text{shift}$'' is dependent on the morphisms in its associated direct system, which is usually made clear by context. \demo
\end{Definition}

\begin{Lemma}\label{lem:fact2}
Let $k<0 \leq i$ be integers. Let $z_i$ be the indecomposable with coordinate $(-n-2,-n-k+i)$ in $\cD,$ and let $\sE_{n+k}$ $=$ $\hocolim_j(\Sigma^{n+k-j}X_j).$ Then any nonzero morphism $\tau' : z_i \rightarrow \sE_{n+k}$ factorises as $\xymatrix{
z_i \ar[r]^-{g_i} & \sE_n \ar[r]^-{\tau} & \sE_{n+k}
}
$ where $\sE_n$ is the hocolimit associated with the slice $(-n-2,\text{---}).$
\end{Lemma}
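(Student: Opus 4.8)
The plan is to recognise $z_i$ as a member of the direct system whose homotopy colimit is $\sE_n$, so that $g_i$ is (taken to be) the associated canonical map into the hocolimit, and then to reduce the whole statement to the single fact that composing $g_i$ with a generator of the one-dimensional space $\hmD(\sE_n,\sE_{n+k})$ is nonzero. First I would translate everything into the standard coordinate system of \citep[Remark 1.4]{J}: the $j$-th term $\Sigma^{n-j}X_j$ of the slice $(-n-2,\text{---})$ has coordinate $(-n-2,-n+j)$, so $z_i$, with coordinate $(-n-2,-n-k+i)$, is exactly the term of that slice with index $j_0=i-k$, and I let $g_i\colon z_i\to\sE_n$ be the corresponding structure map. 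Since $k<0\le i$ we have $j_0=i-k=i+|k|\ge|k|$. Next I would record the dimension counts: $z_i\in\BW(\Sigma^{n+k}X_0)$ (its coordinates satisfy $-n-2\le -n-k-2$, which uses $k\le 0$, and $-n-k+i\ge -n-k$, which uses $i\ge 0$), and likewise $z_i\in\BW(\Sigma^nX_0)$; hence $\hmD(z_i,\sE_{n+k})\cong k$ by Proposition \ref{pro:homsin}, and $\hmD(\sE_n,\sE_{n+k})\cong k$ by Theorem \ref{pro:homsbetweenhocolims}, as $n+k\le n$.

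The core step is to fix a nonzero $\tau_0\colon\sE_n\to\sE_{n+k}$ and show $\tau_0\circ g_i\neq 0$. For this I would apply \citep[Lemma 1.13.1]{JSM} to the direct system defining $\sE_n$ with target $\sE_{n+k}$, obtaining an exact sequence $0\to{\lim}^1_j\hmD(\Sigma^{n-j+1}X_j,\sE_{n+k})\to\hmD(\sE_n,\sE_{n+k})\to\lim_j\hmD(\Sigma^{n-j}X_j,\sE_{n+k})\to 0$ whose right-hand map is precomposition with the canonical maps $g_j\colon\Sigma^{n-j}X_j\to\sE_n$ (so that $g_{j_0}=g_i$). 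Every term $\hmD(\Sigma^{n-j+1}X_j,\sE_{n+k})$ is $0$ or $k$ by Proposition \ref{pro:homsin}, so the ${\lim}^1$ vanishes by Exercise 3.5.2 of \citep{W}, giving an isomorphism $\hmD(\sE_n,\sE_{n+k})\xrightarrow{\ \cong\ }\lim_j\hmD(\Sigma^{n-j}X_j,\sE_{n+k})$. Exactly as in the proof of Theorem \ref{pro:homsbetweenhocolims} (played with $n$ and $n+k$ in the roles of $m$ and $n$, legitimate since $n+k\le n$), this inverse system has the shape $0\leftarrow\cdots\leftarrow 0\leftarrow k\leftarrow k\leftarrow\cdots$ with $|k|$ initial zeros and with every map between nonzero terms an isomorphism; consequently the projection of the limit onto its $j_0$-th coordinate is injective, because $j_0\ge|k|$. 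As $\tau_0\neq 0$, its image $(\tau_0\circ g_j)_j$ in the limit is nonzero, hence so is its $j_0$-th coordinate $\tau_0\circ g_{j_0}=\tau_0\circ g_i$.

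To finish, $\tau_0\circ g_i$ and the given $\tau'$ are both nonzero vectors in the one-dimensional space $\hmD(z_i,\sE_{n+k})$, so $\tau'=\lambda(\tau_0\circ g_i)$ for some $\lambda\in k^{\times}$; setting $\tau:=\lambda\tau_0\colon\sE_n\to\sE_{n+k}$ yields $\tau'=\tau\circ g_i$, which is the asserted factorisation. (Note $\tau_0\circ g_i\neq 0$ also re-confirms $g_i\neq 0$, so no separate argument for that is needed.)

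The only non-formal point, and the one I expect to be the main obstacle to write cleanly, is the middle step: identifying $\tau_0\circ g_i$ as the $j_0$-th coordinate of $\tau_0$ under the Milnor-sequence isomorphism and knowing that coordinate is nonzero. This hinges entirely on the description of the inverse system $\hmD(\Sigma^{n-j}X_j,\sE_{n+k})_j$ — in particular that its transition maps between nonzero terms are isomorphisms and that exactly the first $|k|$ terms vanish — which is already established inside the proof of Theorem \ref{pro:homsbetweenhocolims}; once that is invoked, everything else is bookkeeping with one-dimensional vector spaces.
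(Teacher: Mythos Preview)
Your argument is correct and takes a genuinely different route from the paper's proof. The paper constructs the factorisation explicitly: it places the $z_j$ alongside the parallel objects $y_j=(-n-k-2,-n-k+j)$ on the slice for $\sE_{n+k}$, uses Lemma~\ref{lem:fact1} to build a commutative ladder between the two direct systems, and then applies (TR3) to the two ``$\mathrm{id}-\mathrm{shift}$'' triangles to produce a map $\tau\colon\sE_n\to\sE_{n+k}$ making $\tau'=\tau\circ g_i$. Your approach instead exploits the one-dimensionality of $\hmD(z_i,\sE_{n+k})$ and $\hmD(\sE_n,\sE_{n+k})$: you fix any nonzero $\tau_0\colon\sE_n\to\sE_{n+k}$, read off from the Milnor sequence that $\tau_0\mapsto(\tau_0\circ g_j)_j$ is an isomorphism onto $\lim_j\hmD(\Sigma^{n-j}X_j,\sE_{n+k})$, and then use the analysis already carried out inside the proof of Theorem~\ref{pro:homsbetweenhocolims} (with $m\leadsto n$, $n\leadsto n+k$) to see that the $j_0$-th projection of this limit is injective for $j_0=i-k\ge|k|$. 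This is cleaner and avoids both Lemma~\ref{lem:fact1} and the explicit ladder, at the cost of depending on an internal step of the proof of Theorem~\ref{pro:homsbetweenhocolims} rather than only its statement; the paper's construction, by contrast, is self-contained at the level of triangulated-category axioms and yields a concrete description of $\tau$ as the map induced on hocolimits by the ladder $(\phi_j)$.
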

\begin{proof}
Let $j>i$ be an integer. Then $z_i, z_j \in \BW(\Sigma^{n+k}X_0) \cap \BW(\Sigma^{n}X_0),$ with $z_i, z_j$ sitting on the slice $(-n-2,\text{---}),$ and $\hmD(z_i,z_j)\neq 0.$ Let $y_i$ be the ``corresponding'' indecomposable sat on the slice $(-n-k-2,\text{---});$ that is, $y_i = (-n-k-2,-n-k+i).$ The following picture shows all of this. 
$$ \def\objectstyle{\scriptstyle}
 \centering  \xymatrix @-5.1pc @! {
& & & & & &\hspace{4.5mm} {\iddots}^{\sE_{n}}& & \hspace{8mm} {\iddots}^{\sE_{n+k}}  & &    \\    
& & & & & *+[o][F]{z_j} \ar@{--}[ur] \ar@{.}[dr] & & & & &    \\
    & & & & *+[o][F]{z_i} \ar@{--}[ur] \ar@{.}[dr] & & *+[o][F]{y_j} \ar@{.}[ddrr] \ar@{.}[ddrr] \ar@{--}[uurr] & & & &  \\
    & & &*+[o][F]{z_0} \ar@{--}[ur] & & *+[o][F]{y_i} \ar@{--}[ur] \ar@{.}[dr] & & & & &    \\
   *{} \ar@{--}[r] & *{} \ar@{-}[r] & (-n-2,-n)\ar@{-}[rr] \ar@{--}[ur] & & (-n-k-2,-n-k) \ar@{-}[rrrrr] \ar@{--}[ur] \ar@{.}[ul] & & & *{} \ar@{--}[rrr] & & &  \\
           }
$$
Suppose $j=i+1.$ Then, by Lemma \ref{lem:fact1}, $\tau' : z_i \rightarrow \sE_{n+k}$ can be factored through $z_{i+1}, y_i,$ and $y_{i+1},$ to form a commutative diagram illustrated below.
$$ \def\objectstyle{\scriptstyle}
  \xymatrix @-4.8pc @! {
& & & & & &\hspace{4.5mm} {\iddots}^{\sE_{n}}& & {\sE_{n+k}}  & &  \\    
& & & & & *+[o][F]{z_{i+1}} \ar@{--}[ur] \ar@{.}[dr] \ar[dr]^>>>>>>{\phi_{i+1}} & & \hspace{1mm} {\iddots} & & &  \\
    & & & & *+[o][F]{z_i} \ar@{--}[ur]  \ar[ur]^-{\zeta_i} \ar[dr]_-{\phi_i}  \ar@{.}[dr] \ar[rr]_-{\widetilde{\tau}_{(i,i+1)}} \ar@/^0.75pc/[uurrrr]^>>>>>>>>>>>>{\tau'} & & *+[o][F]{y_{i+1}} \ar@{.}[ddrr] \ar@{.}[ddrr] \ar@{--}[ur] \ar@/^-0.8pc/[uurr]_{f_{i+1}} & & & & \\
    & & & & & *+[o][F]{y_i} \ar@{--}[ur] \ar@{.}[dr] \ar[ur]_{\upsilon_i} & & & & &  \\
   *{} \ar@{--}[r] & *{} \ar@{-}[r] & (-n-2,-n)\ar@{-}[rr] \ar@{--}[uurr] & & (-n-k-2,-n-k) \ar@{-}[rrrrr] \ar@{--}[ur] & & & *{} \ar@{--}[rrr] & & &   \\
           }
$$
Note that $\widetilde{\tau}_{(i,i+1)} = \phi_{i+1} \circ \zeta_i = \upsilon_{i} \circ \phi_i.$ This method induces the following ladder.
\begin{equation} \begin{aligned} \label{equ:zyladder}
\xymatrix{
z_i \ar[rr]^-{\zeta_i} \ar[dd]_-{\phi_i} \ar@/^0.65pc/[ddrr]_-{\widetilde{\tau}_{(i,i+1)}} \ar@/^1.3pc/[ddrrrr]^>>>>>>>>>>{\widetilde{\tau}_{(i,i+2)}} && z_{i+1} \ar[rr]^-{\zeta_{i+1}} \ar[dd]^>>>>>>>{\phi_{i+1}} && z_{i+2} \ar[rr] \ar[dd]^-{\phi_{i+2}} && \cdots \\ \\
y_i \ar[rr]_-{\upsilon_{i}} && y_{i+1} \ar[rr]_-{\upsilon_{i+1}} && y_{i+2} \ar[rr] \ar[rr] && \cdots
}
\end{aligned} \end{equation}
Recall that hocolimits are defined in \citep{BN} on page 209. Recall Definition \ref{def:idnminusshift}. Consider the direct system
$$\xymatrix{ y_i \ar[rr]^-{\upsilon_i} && y_{i+1} \ar[rr]^-{\upsilon_{i+1}} && y_{i+2} \ar[rr]^-{\upsilon_{i+2}} && \cdots}$$
and its associated ``$\text{id}-\text{shift}$'' map
$$\xymatrix{ \displaystyle\coprod_{k=i}^{\infty}y_k \ar[rr]^-{\text{id}-\text{shift}} && \displaystyle\coprod_{k=i}^{\infty}y_k}.$$
Then in fact the hocolimit of this direct system is defined as the mapping cone of the ``$\text{id}-\text{shift}$'' map, which can be found by completing to a distinguished triangle the following diagram.
\begin{equation} \begin{aligned} \label{equ:idsh} \xymatrix{ \displaystyle\coprod_{k=i}^{\infty}y_k \ar[rr]^-{\text{id}-\text{shift}} && \displaystyle\coprod_{k=i}^{\infty}y_k  \ar@{-->}[rr]^-{\Phi} && \hocolim y_i \ar[r] &} \end{aligned} \end{equation}
There are coproduct inclusions
$$\iota_r : y_r \rightarrow \displaystyle\coprod_{k=i}^{\infty}y_k$$
for $r \geq i$. Let $r \geq i$ be an integer.
\begin{equation} \begin{aligned} \label{equ:idsh2} \xymatrix{ \displaystyle\coprod_{k=i}^{\infty}y_k \ar[rr]^-{\text{id}-\text{shift}} && \displaystyle\coprod_{k=i}^{\infty}y_k  \ar@{-->}[rr]^-{\Phi} && \hocolim y_i \ar[r] & \\ \\
y_{r} \ar@{^{(}->}[uu]^-{\iota_{r}} && y_{r} \ar@{^{(}->}[uu]^-{\iota_{r}} \ar[uurr]_-{f_{r}} } \end{aligned} \end{equation}
Now, because $\Phi \circ (\text{id}-\text{shift}) = 0,$ we see that $f_{r+1} \circ \upsilon_r = f_r.$ Hence the ladder in Figure (\ref{equ:zyladder}) can be extended in the following way. \\ \\
\begin{equation} \begin{aligned} \label{equ:ladderwithhocolims}
\xymatrix{
z_i \ar[rr]^-{\zeta_i} \ar[dd]_-{\phi_i} \ar@/^0.65pc/[ddrr]_-{\widetilde{\tau}_{(i,i+1)}} \ar@/^1.3pc/[ddrrrr]^>>>>>>>>>>{\widetilde{\tau}_{(i,i+2)}} \ar@/^4.5pc/[rrrrrrr]^<<<<<<<<<<<<{g_i} && z_{i+1} \ar[rr]^-{\zeta_{i+1}} \ar[dd]^>>>>>>>{\phi_{i+1}} \ar@/^3pc/[rrrrr]^<<<<<<<<<{g_{i+1}}  && z_{i+2} \ar[rr] \ar[dd]^-{\phi_{i+2}} \ar@/^1.5pc/[rrr]^<<<<<{g_{i+2}} && \cdots & \sE_n \\ \\
y_i \ar[rr]_-{\upsilon_{i}} \ar@/_4.5pc/[rrrrrrr]_<<<<<<<<<<<<{f_i} && y_{i+1} \ar[rr]_-{\upsilon_{i+1}} \ar@/_3pc/[rrrrr]_<<<<<<<<<{f_{i+1}} && y_{i+2} \ar[rr] \ar[rr] \ar@/_1.5pc/[rrr]_<<<<<{f_{i+2}} && \cdots  & \sE_{n+k}
}
\end{aligned} \end{equation} \\ \\
There are two distinguished triangles:
\begin{equation} \begin{aligned} \label{equ:idsh3} \xymatrix{ \displaystyle\coprod_{k=i}^{\infty}y_k \ar[rr]^-{\text{id}-\text{shift}} && \displaystyle\coprod_{k=i}^{\infty}y_k  \ar@{-->}[rr]^-{} && \hocolim y_i \ar[rr] && \Sigma\displaystyle\coprod_{k=i}^{\infty}y_k} \end{aligned} \end{equation}
and
\begin{equation} \begin{aligned} \label{equ:idsh4} \xymatrix{ \displaystyle\coprod_{k=i}^{\infty}z_k \ar[rr]^-{\text{id}-\text{shift}} && \displaystyle\coprod_{k=i}^{\infty}z_k  \ar@{-->}[rr]^-{} && \hocolim z_i \ar[rr] && \Sigma\displaystyle\coprod_{k=i}^{\infty}z_k,} \end{aligned} \end{equation}
which, by axiom (TR3) of triangulated categories (see, for example, \citep[Definition 10.2.1]{W}) can be connected with morphisms in the following way, creating a commutative diagram with rows distinguished triangles.
\begin{equation} \begin{aligned} \label{equ:idshladder} \xymatrix{ \displaystyle\coprod_{k=i}^{\infty}z_k \ar[rr]^-{\text{id}-\text{shift}} \ar[dd]_-{\coprod\phi_k} && \displaystyle\coprod_{k=i}^{\infty}z_k  \ar@{-->}[rr]^-{} \ar[dd]_-{\coprod\phi_k} && \hocolim z_i \ar[rr] \ar@{-->}[dd]_-{\tau} && \Sigma\displaystyle\coprod_{k=i}^{\infty}z_k \ar[dd]_-{\Sigma\coprod\phi_k} \\ \\
\displaystyle\coprod_{k=i}^{\infty}y_k \ar[rr]^-{\text{id}-\text{shift}} && \displaystyle\coprod_{k=i}^{\infty}y_k  \ar@{-->}[rr]^-{} && \hocolim y_i \ar[rr] && \Sigma\displaystyle\coprod_{k=i}^{\infty}y_k} \end{aligned} \end{equation}
Hence, Figure (\ref{equ:zyladder}) can be completed with a morphism from $\sE_n$ to $\sE_{n+k}.$\\
\begin{equation} \begin{aligned} \label{equ:ladderwithhocolims2}
\xymatrix{
z_i \ar[rr]^-{\zeta_i} \ar[dd]_-{\phi_i} \ar@/^0.65pc/[ddrr]_-{\widetilde{\tau}_{(i,i+1)}} \ar@/^1.3pc/[ddrrrr]^>>>>>>>>>>{\widetilde{\tau}_{(i,i+2)}} \ar@/^4.5pc/[rrrrrrr]^<<<<<<<<<<<<{g_i} && z_{i+1} \ar[rr]^-{\zeta_{i+1}} \ar[dd]^>>>>>>>{\phi_{i+1}} \ar@/^3pc/[rrrrr]^<<<<<<<<<{g_{i+1}}  && z_{i+2} \ar[rr] \ar[dd]^-{\phi_{i+2}} \ar@/^1.5pc/[rrr]^<<<<<{g_{i+2}} && \cdots & \sE_n \ar[dd]_-{\tau} \\ \\
y_i \ar[rr]_-{\upsilon_{i}} \ar@/_4.5pc/[rrrrrrr]_<<<<<<<<<<<<{f_i} && y_{i+1} \ar[rr]_-{\upsilon_{i+1}} \ar@/_3pc/[rrrrr]_<<<<<<<<<{f_{i+1}} && y_{i+2} \ar[rr] \ar[rr] \ar@/_1.5pc/[rrr]_<<<<<{f_{i+2}} && \cdots  & \sE_{n+k}
}
\end{aligned} \end{equation} \\ \\
Thus, $\tau' = f_{i+1} \circ \widetilde{\tau}_{(i,i+1)} = \tau \circ g_i,$ which means that $\tau' : z_i \rightarrow \sE_{n+k}$ factors through $\tau : \sE_n \rightarrow \sE_{n+k},$ as required.
\end{proof}

\noindent Now we are in a position to prove that $\cT$ is functorially finite if and only if $\fT$ has a fountain.

\begin{Proposition}
Let $\cT$ be a weakly cluster tilting subcategory of the form in Theorem \ref{pro:maximal}, which contains the hocolimit $\sE_n,$ with corresponding set of arcs $\fT.$ Then $\cT$ is functorially finite.
\end{Proposition}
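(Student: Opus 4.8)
The plan is to reduce, by Lemma \ref{almostrightlem}, to constructing for each object $d$ of $\Dbar$ an almost-right-$\cT$-approximation and an almost-left-$\cT$-approximation; the right case is carried out below and the left case is dual (using the evident analogues of Lemmas \ref{lem:fact1} and \ref{lem:fact2} for maps out of the hocolimits, together with the $\lim$--$\lim^1$ descriptions of the relevant Hom-spaces as in Corollary \ref{cor:homsout} and Theorem \ref{pro:homsbetweenhocolims}). By Theorem \ref{pro:maximal}(2), $\fT = \{(m,\infty)\}\cup\fT'$ with $m=-n-2$ and $\fT'$ maximal non-crossing with a fountain at $m$, so the indecomposables of $\cT$ are $\sE_n$ together with the finite objects attached to the arcs of $\fT'$. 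Two combinatorial remarks are used throughout: since $\fT$ is non-crossing and contains $(m,\infty)$, no arc of $\fT'$ strictly overarches $m$; and $\fT'$ has exactly one fountain, namely $m$ (a second fountain at some $m'\neq m$ would force a crossing between a far-left arc ending at $\max(m,m')$ and a far-right arc starting at $\min(m,m')$), so for every integer $b\neq m$ only finitely many arcs of $\fT'$ have an endpoint at $b$.

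First I would treat a finite object $d\in\cD$. Because $\fT'$ has a fountain, the subcategory $\cT'=\add\{\text{finite objects of }\fT'\}$ is functorially finite in $\cD$ by \citep{J}, so $d$ admits a right-$\cT'$-approximation $\sigma\colon t'\to d$ with $t'$ a finite direct sum of objects of $\cT'$. As $\hmD$ agrees with $\Hom_{\cD}$ on finite objects, every morphism from a finite object of $\cT$ to $d$ factors through $\sigma$. If $\hmD(\sE_n,d)=0$ take $\tau=\sigma$; if $\hmD(\sE_n,d)\cong k$ take $t=t'\oplus\sE_n$ with $\tau$ equal to $\sigma$ on $t'$ and to a nonzero map on $\sE_n$. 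Either way $\tau$ is a right-$\cT$-approximation of $d$.

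The main work is $d=\sE_h$. Suppose first $h\leq n$, so $\hmD(\sE_n,\sE_h)\cong k$ by Theorem \ref{pro:homsbetweenhocolims}; fix a nonzero $\rho\colon\sE_n\to\sE_h$. By Proposition \ref{pro:homsin} the finite objects of $\fT'$ with a nonzero map to $\sE_h$ are those in $\BW(\Sigma^hX_0)$, and since no arc of $\fT'$ overarches $m$ these are the arcs $(m,b)$ with $b\geq -h$ (infinitely many --- a tail of the right-fountain) together with finitely many arcs $(a,b)\in\fT'$ with $m<a\leq -h-2$. Lemma \ref{lem:fact2} shows that every nonzero map $(m,b)\to\sE_h$ factors through $\rho$, so the evident morphism from $t=\sE_n\oplus\big(\bigoplus\text{those finitely many other arcs}\big)$ to $\sE_h$ is a right-$\cT$-approximation (for $h=n$ just take $\tau=\id_{\sE_n}$). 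Now suppose $h>n$, so $\hmD(\sE_n,\sE_h)=0$ and $\sE_n$ is irrelevant. For $h=n+1$ no object of $\cT$ maps nonzero to $\sE_{n+1}$, so $0\to\sE_{n+1}$ works. For $h\geq n+2$ the finite objects of $\fT'$ with a nonzero map to $\sE_h$ are the arcs $(a,b)\in\fT'$ with $a\leq -h-2$ and $-h\leq b\leq m$; for each $b<m$ there are only finitely many of these, while for $b=m$ there are infinitely many --- a tail of the left-fountain. Picking any $(a_0,m)\in\fT'$ with $a_0\leq -h-3$, Lemma \ref{lem:fact1}, applied with its parameter ``$n$'' replaced by $n+2$ (and its $k$ by $h-n-2$), shows that every nonzero map $(a,m)\to\sE_h$ with $a\leq a_0$ factors through a nonzero map $(a_0,m)\to\sE_h$. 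Hence only finitely many objects of $\cT$ are needed as summands of $t$, and the evident $t\to\sE_h$ is a right-$\cT$-approximation; when $h=n+2$ the substitution makes Lemma \ref{lem:fact1}'s index $0$ and one instead invokes Lemma 2.5 of \citep{J} directly in the same fashion.

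The crux is the subcase $h\geq n+2$: here $\sE_n$ is orthogonal both to $\sE_h$ (Theorem \ref{pro:homsbetweenhocolims}) and to every finite object $(a,m)$ mapping to $\sE_h$ (Proposition \ref{pro:homsin}, as $(a,m)\notin\BW(\Sigma^nX_0)$), so the infinitely many maps $(a,m)\to\sE_h$ cannot be funnelled through the unique hocolimit of $\cT$; it is precisely the left-fountain of $\fT'$ at $m$ that supplies a finite arc $(a_0,m)\in\fT'$ through which, by Lemma \ref{lem:fact1}, all but finitely many of them factor. This is where the fountain hypothesis is essential, and it explains why the analogous statement fails for the locally finite configurations of Theorem \ref{pro:maximal}(1).
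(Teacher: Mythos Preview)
Your argument is correct and, for the hocolimit targets, essentially coincides with the paper's: both treat $d=\sE_h$ by splitting into the cases $h\le n$, $h=n+1$, and $h\ge n+2$, and both invoke Lemma~\ref{lem:fact2} in the first case and Lemma~\ref{lem:fact1} in the third. There are two genuine differences worth noting. First, for finite $d\in\cD$ you short-circuit the paper's geometric case analysis (splitting by the position of $d$ relative to the slice/coslice through $(-n-2)$ and invoking \cite[Lemmas 2.5, 2.7]{J}) by simply quoting that $\cT'$ is already functorially finite in $\cD$ because $\fT'$ has a fountain, and then adjoining the single summand $\sE_n$ when needed; this is cleaner and buys you the whole finite case at once. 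Second, for left approximations the paper does \emph{not} dualise: it appeals to \cite[Lemma 3.2]{K}, which gives left-approximations for free once right-approximations are established. Your plan to run the dual argument instead requires analogues of Lemmas~\ref{lem:fact1} and~\ref{lem:fact2} for morphisms \emph{out of} the $\sE_m$; these are believable (the $\lim/\lim^1$ description from Corollary~\ref{cor:homsout} plays the role that compactness and Neeman's lemma play on the incoming side), but they are not stated in the paper, so your route trades one external citation for a pair of unstated dual lemmas.
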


\begin{proof}
We begin by proving that if $\fT$ has a fountain at $-n-2$, then $\Dbar$ has almost-right-$\cT$-approximations. If $\fT$ has a fountain at $-n-2,$ then in the Auslander-Reiten quiver, the slice $(-n-2,\text{---})$ and the coslice $(\text{---},-n-2)$ have an infinite number of $\cT$-objects on them, and no other slice or coslice has this property. Suppose we seek an almost-right-$\cT$-approximation of $d,$ where $d$ sits on the slice $(-n-3,\text{---}),$ or is to the right of it in the quiver, as shown below.
\[ \centering \def\objectstyle{\scriptstyle}
   \xymatrix @-4.8pc @! {
   & & & & & & & & & & & & &  \\
   & & & & & & & & & & & & &  \\
   & & & & & & & *+[o][F]{d} \ar@{.}[uurr] \ar@{~}[uull] & & & & *{} \ar@{~}[uurr] \ar@{~}[ddrr] & &  \\
   & & & & & & & & & & & & & &   \\
   *{} \ar@{-}[r]   & *{} \ar@{-}[rr] & & (-n-4,-n-2) \ar@{-}[rr] \ar@{--}[uuulll] & &  *+[o][F]{} \ar@{-}[rr] \ar@{.}[uurr] \ar@{~}[uurr] \ar@{~}[uuuullll] & & (-n-2,-n) \ar@{-}[rrrrrrr] \ar@{--}[uuuurrrr] & & & & & & *{} \ar@{~}[ur] &  \\
           }
\]

The region which has nonzero maps to $d,$ illustrated with wavy lines, does not contain an infinite number of $\cT$-objects, because it intersects only finitely many slices and coslices which all have only finitely many $\cT$-indecomposables. This means that if $d$ is on $(-n-3,\text{---}),$ or is to the right of it in the quiver, then only finitely many objects in $\ind\cT$ have nonzero morphisms to $d$. Hence, $0 \rightarrow d$ is an almost-right-$\cT$-approximation of $d.$ 

If $d$ is on the coslice $(\text{---},-n-3),$ or is left of it on the quiver, then $d$ also has a right $\cT$-approximation, for the same reasons as above. This is shown below.
$$ \centering \def\objectstyle{\scriptstyle}
  \xymatrix @-5.0pc @! {
   & & & & & & & & & & & & & & & & \\
   & & & & & & & & & & & & & & & & \\
   & & & & & & & & & & & & & & & & \\
   & & & &  *+[o][F]{d} \ar@{.}[uuulll] \ar@{~}[uuulll] \ar@{~}[ddll] & & & & *{} \ar@{~}[uuurrr] & & & & & \\
   & & & & & & & & & & & & & & \\
   &*{} \ar@{-}[rrrrr] & *{} \ar@{~}[ul] & & & & (-n-5,-n-3) \ar@{.}[uull] \ar@{-}[rr] & & *+[o][F]{} \ar@{--}[uuuuulllll] \ar@{-}[rr] & & *+[o][F]{} \ar@{-}[rr] \ar@{~}[uull] \ar@{~}[uuuuurrrrr] & & (-n-2,-n) \ar@{--}[uuurrr] \ar@{-}[rrr] & & & \\
           }
$$
\noindent Now suppose that $d \in \BW(\Sigma^{n+2}X_0).$ This is illustrated below.
$$ \centering \def\objectstyle{\scriptstyle}
  \xymatrix @-4.9pc @! {
  & & & & & & & & & & & & & & & \\
  & & & & & & & & & & & & & & & \\
  & & & & & *+[o][F]{d} \ar@{~}[dddlll] \ar@{~}[uull] & & & & *{} \ar@{~}[dddrrr] \ar@{~}[uurr] & & & & & & \\
  & & & & *{} \ar@{-}[uuulll] & & & & & & & & & & & \\
  & & & & & & & & & & & *{} \ar@{-}[uuuurrrr] & &  & & \\
  *{} \ar@{-}[rrrrrr] & & *{} \ar@{~}[uull] & & & & (-n-4,-n-2) \ar@{.}[uuuuurrrrr] \ar@{--}[uuuuulllll] \ar@{.}[uuuuulllll]  \ar@{-}[rr] & & *+[o][F]{} \ar@{-}[rr] & & (-n-2,-n) \ar@{--}[uuuuurrrrr] \ar@{-}[rrrrr] & & \ar@{~}[uuurrr] & & & \\
}
$$
The wavy lines indicate the region with nonzero maps to $d.$ The dotted lines illustrate $\BW(\Sigma^{n+2}X_0).$ The dashed lines are the slice/coslice which contain an infinite number of $\cT$-objects. The solid lines illustrate the part of the slice/coslice which have an infinite number of $\cT$-objects which also map to $d.$ Pick $\widetilde{t} \in \cT \cap (\text{---},-n-2)$ and pick a nonzero morphism $\widetilde{t} \rightarrow d.$ Then $\widetilde{t} = (-n-k,-n-2),$ where $k \geq 4.$  By Lemma 2.5 of \citep{J}, each $t_1 \rightarrow d,$ with $t_1 = (-n-s,-n-2)$ for $s \geq k,$ factors through $\widetilde{t} \rightarrow d.$ This is shown below.
$$ \centering \def\objectstyle{\scriptstyle}
  \xymatrix @-4.9pc @! {
  & & & & & & & & & & & & & & & \\
  & & *+[o][F]{t_1} \ar[dr] \ar@{-}[ul] \ar@{--}[ul] \ar@{.}[ul]  \ar@/^0.25pc/[drrr] & & & & & & & & & & & & & \\
  & & & *+[o][F]{\widetilde{t}} \ar@{-}[ul] \ar@{--}[ul] \ar@{.}[ul] \ar@/_0.5pc/[rr] & & *+[o][F]{d} \ar@{~}[dddlll] \ar@{~}[uull] & & & & *{} \ar@{~}[dddrrr] \ar@{~}[uurr] & & & & & & \\
  & & & & *{} \ar@{-}[ul] & & & & & & & & & & & \\
  & & & & & & & & & & & *{} \ar@{-}[uuuurrrr] & &  & & \\
  *{} \ar@{-}[rrrrrr] & & *{} \ar@{~}[uull] & & & & (-n-4,-n-2) \ar@{.}[uuuuurrrrr] \ar@{--}[uuulll] \ar@{.}[uuulll]  \ar@{-}[rr] & & *+[o][F]{} \ar@{-}[rr] & & (-n-2,-n) \ar@{--}[uuuuurrrrr] \ar@{-}[rrrrr] & & \ar@{~}[uuurrr] & & & \\
}
$$
Similarly, by Lemma 2.7 of \citep{J}, each $t_2 \rightarrow d,$ with $t_2 = (-n-2,-n+k)$ for $k \geq 0$ factors through $\widetilde{t} \rightarrow d.$ This is shown below. 
$$ \centering \def\objectstyle{\scriptstyle}
  \xymatrix @-4.9pc @! {
  & & & & & & & & & & & & & & & \\
  & & *{} \ar@{-}[ul] \ar@{--}[ul] \ar@{.}[ul] & & & & & & & & & & & & & \\
  & & & *+[o][F]{\widetilde{t}} \ar@{-}[ul] \ar@{--}[ul] \ar@{.}[ul] \ar@/_0.5pc/[rr] & & *+[o][F]{d} \ar@{~}[dddlll] \ar@{~}[uull] & & & & *{} \ar@{~}[dddrrr] \ar@{~}[uurr] & & & & & & \\
  & & & & *{} \ar@{-}[ul] & & & & & & & & *+[o][F]{t_2} \ar@{-}[uuurrr] \ar@/^1pc/[ulllllll] \ar@/_3pc/[ulllllllll] & & & \\
  & & & & & & & & & & & *{} \ar@{-}[ur] & &  & & \\
  *{} \ar@{-}[rrrrrr] & & *{} \ar@{~}[uull] & & & & (-n-4,-n-2) \ar@{.}[uuuuurrrrr] \ar@{--}[uuulll] \ar@{.}[uuulll]  \ar@{-}[rr] & & *+[o][F]{} \ar@{-}[rr] & & (-n-2,-n) \ar@{--}[uurr] \ar@{-}[rrrrr] & & \ar@{~}[uuurrr] & & & \\
}
$$
Hence, $\widetilde{t} \rightarrow d$ is an almost-right-$\cT$-approximation of $d,$ because $\widetilde{t}$ deals with all but finitely many indecomposables of $\cT.$ This is because the wavy region intersects finitely many slices and coslices, and except for $(-n-2,\text{---})$ and $(\text{---},-n-2)$ each has only finitely many indecomposable $\cT$-objects. Therefore, by Lemma \ref{almostrightlem}, $d$ has a right-$\cT$-approximation. We have now dealt with each $d \in \ind \cD.$

Now, suppose $d$ is a hocolimit object. First we consider $d = \sE_{n+k}$ for $k \leq 0.$ This is illustrated below.
$$ \centering \def\objectstyle{\scriptstyle}
  \xymatrix @-6.4pc @! {
& & & & & & & & & & & & & \hspace{4mm} {\iddots}^{\sE_{n}} & & & & \hspace{4mm} {\iddots}^{\hspace{1mm} d}  & \\
& & & & & & & & *+[o][F]{1} & & & & & & & & &\\
& & & & & & & & & & & & & & & & &\\
& & & & & & & & *{} \ar@{--}[uuurrr]_-{r_1} & & & & & & & & & \\
& & & & & & *{} & & & *{} \ar@{-}[uuuurrrr] & & & *+[o][F]{2} & & & &  \\
& & & & & & & & & & *{} \ar@{--}[uuuuurrrrr]^-{r_2} & & & & & &  \\
& *{} \ar@{-}[rr] & & (-n-4,-n-2) \ar@{-}[rr] \ar@{.}[uull] & & *+[o][F]{} \ar@{-}[rr] \ar@{--}[uuurrr] & & (-n-2,-n) \ar@{.}[uuuuuurrrrrr] \ar@{-}[rrrr] & & & & (-n-k-2,-n-k) \ar@{-}[rrrrrrr] \ar@{~}[uuuuuurrrrrr]  \ar@{~}[uuuuuullllll]& & & & & & &   \\
           }
$$
To clarify, region $\def\objectstyle{\scriptstyle}\xymatrix{*+[o][F]{1}}$ is the V-shaped region with the wavy line on the left-hand-side and the dashed line labelled $r_1$ on the right-hand-side (note, $r_1=(-n-3,\text{---})$. There are no $\cT$-objects in region $\def\objectstyle{\scriptstyle}\xymatrix{*+[o][F]{1}}$ due to the forbidden region property of the $\cT$-indecomposables which lie on the slice $(-n-2,\text{---}).$ Region $\def\objectstyle{\scriptstyle}\xymatrix{*+[o][F]{2}}$ is bounded by the dashed line labelled $r_2$ and the wavy lines (note, $r_2=(-n-1,\text{---})$. There are only finitely many $\cT$-objects in here, because region $\def\objectstyle{\scriptstyle}\xymatrix{*+[o][F]{2}}$ intersects finitely many slices each with finitely many $\cT$-indecomposables. The wavy lines illustrate the region of indecomposables which have nonzero morphisms to $d.$ Note, $\sE_n$ has nonzero morphisms to $d,$ and is included in this region. In all cases, the boundary lines are part of their respective regions. The solid line indicates the part of the slice $(-n-2,\text{---})$, together with its hocolimit $\sE_n,$ which has an infinite number of $\cT$-objects which map to $d$. Let $\sE_n \rightarrow d$ be a nonzero morphism. We claim $\sE_n \rightarrow d$ is an almost-right-$\cT$-approximation. Indeed, if $t \in \cT \cap (-n-2,\text{---})$ then $t \rightarrow d$ factorises as $t \rightarrow \sE_n \rightarrow d,$ by Lemma \ref{lem:fact2}.

Now let $d = \sE_{n+1}.$ The right-$\cT$-approximation is trivially zero, because $\hmD(\cT,d)=0$ since $d=\Sigma \sE_n \in \Sigma \cT$ and $\cT$ is weakly cluster tilting.

Finally, let $d=\sE_{n+k}$ for $k>1.$ The following illustrates this.
$$ \centering \def\objectstyle{\scriptstyle}
  \xymatrix @-4.9pc @! {
& & & & & & & & \hspace{4mm} {\iddots}^{\hspace{1mm} d} & & & & & & \\
& & & & & &  & & & & & & & & \\
& & & & & & & & & & & & & & \\
& & & & & *{} \ar@{-}[uuulll] & & & & & & & & & \\
& & & & & & & & & & & & & & \\
& *{} \ar@{-}[rrrrr] & & *{} \ar@{~}[uull] \ar@{~}[uuuuurrrrr] & &  \ar@{-}[rr]  & & (-n-4,-n-2) \ar@{-}[rrr] \ar@{.}[uull] & &  \ar@{-}[rr] & & (-n-2,-n) \ar@{-}[rrr] \ar@{.}[uuurrr] & & & \\
}
$$
Pick any $\widetilde{t} \in \cT \cap (\text{---},-n-2),$ and a nonzero morphism $\widetilde{t} \rightarrow d,$ where $\widetilde{t} = (-n-k,-n-2).$ Then this is an almost-right-$\cT$-approximation of $d,$ for if $t \in \cT \cap (\text{---},-n-2)$ is equal to $(-n-s,-n-2),$ for $s \geq k,$ then $t \rightarrow d$ factors through $\widetilde{t} \rightarrow d,$ by Lemma \ref{lem:fact1}. Note, $\cT \cap (-n-2,\text{---})$ has only zero morphisms to $d.$ Therefore, there exists a right-$\cT$-approximation of $d.$

This covers all possibilities of what $d$ can be if $d$ is one of the hocolims. In every case, there exists a right-$\cT$-approximation of $d,$ so $\cT$ is right-approximating. By Lemma 3.2 of \citep{K}, this is enough to show that $\cT$ is also left-approximating. Hence, if $\fT$ has a fountain at $-n-2$, then $\cT$ is functorially finite.
\end{proof}

\begin{Lemma} \label{leapfrog2}
Let $\cT$ be a subcategory of $\Dbar$ associated with the arc diagram $\fT,$ where $\fT$ is a maximal, non-crossing, locally finite set of finite arcs. Then $\cT$ is not cluster tilting.
\end{Lemma}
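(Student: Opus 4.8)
\emph{Strategy.} Since a cluster tilting subcategory of $\Dbar$ is in particular functorially finite, and $\cT$ is weakly cluster tilting by Theorem \ref{pro:maximal}(1), it suffices to show that $\cT$ is \emph{not} functorially finite. The plan is to exhibit a Pr\"ufer object with no right-$\cT$-approximation. Fix any $h \in \BZ$ and consider $\sE_h = \hocolim_i(\Sigma^{h-i}X_i)$. By Proposition \ref{pro:homsin}, $\hmD(Y,\sE_h) \cong k$ exactly when $Y \in \BW(\Sigma^h X_0)$ and is $0$ otherwise; translating through the coordinate system $\Sigma^{n-l}X_{l-k-2} = (-n+k,-n+l)$, one checks that $\BW(\Sigma^h X_0)$ is precisely the set of arcs $(a,b)$ with $a \le -h-2$ and $b \ge -h$, i.e.\ the arcs strictly overarcing $-h-1$. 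Thus the $\cT$-indecomposables admitting a nonzero morphism to $\sE_h$ are exactly those of $\fT$ whose arc strictly overarcs $-h-1$, and by Corollary \ref{cor:locallyfinitesecondclaim} at least one arc of $\fT$ does so.

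\emph{The contradiction.} Suppose $\tau \colon t \to \sE_h$ were a right-$\cT$-approximation, and let $t_1,\dots,t_r$ be the finitely many indecomposable summands of $t$, with arcs $(a_1,b_1),\dots,(a_r,b_r)$. Applying Corollary \ref{cor:locallyfinitesecondclaim} to obtain an arc of $\fT$ overarcing $-h-1$ and then enlarging it via Corollary \ref{cor:arbitrarilylongarc}, I get an arc $(x,y) \in \fT$ with $x \le -h-2$, $y \ge -h$, and $x < a_i$, $y > b_i$ for every $i$; write $t'$ for the associated indecomposable of $\cD$. Then $\hmD(t',\sE_h)\cong k$, so there is a nonzero $\psi \colon t' \to \sE_h$. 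The key point is that $\hmD(t',t_i) = 0$ for each $i$: since $\Hom(t',t_i) = \hmD(t',\Sigma\,\Sigma^{-1}t_i)$ and $\Sigma^{-1}$ sends the arc $(a_i,b_i)$ to $(a_i+1,b_i+1)$, \citep[Lemma 3.6]{J} says a nonzero such Hom would force the arcs $(x,y)$ and $(a_i+1,b_i+1)$ to cross, which is impossible once $x < a_i$ and $y > b_i$. Hence $\hmD(t',t) = \bigoplus_i \hmD(t',t_i) = 0$, so $\psi$ admits no factorisation $t' \to t \xrightarrow{\tau} \sE_h$, contradicting that $\tau$ is a right-$\cT$-approximation. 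Therefore $\sE_h$ has no right-$\cT$-approximation, $\cT$ is not right-approximating, hence not functorially finite, and so not cluster tilting.

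\emph{Main obstacle.} The step that requires care is the vanishing $\hmD(t',t_i)=0$: one must record the precise combinatorial criterion for non-vanishing of $\Hom$ between two finite indecomposables of $\cD$ — equivalently, spell out the sets $H^{\pm}$ of Notation \ref{not:Hplusminus} in the standard coordinates, or use the ``crossing after a unit shift'' reformulation of \citep[Lemma 3.6]{J} — and then verify it fails whenever the arc of $t'$ strictly overarcs that of $t_i$. Everything else is routine bookkeeping: the reduction to non-functorial-finiteness, the identification of $\BW(\Sigma^h X_0)$ with the overarcs of $-h-1$, and the construction of the long arc $(x,y)$ out of Corollaries \ref{cor:locallyfinitesecondclaim} and \ref{cor:arbitrarilylongarc}.
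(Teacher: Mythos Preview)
Your proof is correct and follows the same essential strategy as the paper: exhibit a Pr\"ufer object $\sE_h$ with no right-$\cT$-approximation by exploiting the unbounded chain of strong overarcs guaranteed by local finiteness. The only difference is organisational. The paper first builds an infinite family $T=\{(p-\delta_i,q+\varepsilon_i)\}$ of pairwise $\Hom$-orthogonal $\cT$-indecomposables mapping to $\sE_{-p-2}$, then argues by pigeonhole that no finite direct sum can absorb them all; you instead assume a putative approximation $t=t_1\oplus\cdots\oplus t_r$ and directly manufacture a single overarc $(x,y)$ with $\Hom(t',t_i)=0$ for every summand. Your packaging is a bit tighter---it avoids the mildly informal final sentence in the paper's proof (``must have a summand $\widetilde{t_l}$ which allows an infinite number of $T$-indecomposables to factor through it --- which is impossible'')---but the underlying computation (the shift-by-one crossing criterion showing a strict overarc has no morphism to what it overarcs) is identical in both.
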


\begin{proof}
If a category is weakly cluster tilting, but fails to be cluster tilting, then that category must fail to be functorially finite. This in turn amounts to the category failing to be both right and left-approximating. So we prove that $\cT$ is not right-approximating. Let $(p,q)$ be an arc in $\fT.$ By Lemma \ref{lem:locallyfinitemainclaim}, there exists a strong overarc $(p-\delta_1,q+\varepsilon_1) \in \fT$ where $\varepsilon_1, \delta_1 > 0.$ But this arc has an even longer arc enveloping it, $(p-\delta_2,q+\varepsilon_2),$ where $\varepsilon_2 > \varepsilon_1,$ and $\delta_2 > \delta_1.$ We are led to a sequence of arcs in $\fT$
$$(p,q), (p-\delta_1,q+\varepsilon_1), (p-\delta_2,q+\varepsilon_2), (p-\delta_3,q+\varepsilon_3), \cdots$$
where 
$$0 < \varepsilon_1 < \varepsilon_2 < \varepsilon_3 < \cdots$$
and
$$0 < \delta_1 < \delta_2 < \delta_3 < \cdots.$$

Now, if $(p,q) \in \fT,$ then this corresponds to $\Sigma^{-q}X_{q-p-2} \in \cT,$ and this can be rewritten as $\Sigma^{-p-2-i}X_{i}$ where $i = q-p-2$ (and note that $i \geq 0$ since $q \geq p+2$). Hence $(p,q)$ lies on the slice whose hocolimit is $\sE_{-p-2} \in \Dbar.$ We claim that $\sE_{-p-2}$ has no right-$\cT$-approximation. Let $(p-\delta,q+\varepsilon)$ be an arc in $\fT,$ where $\varepsilon, \delta >0.$ Then the corresponding indecomposable in $\cT$ is $\Sigma^{-q-\varepsilon}X_{q-p-2+\varepsilon+\delta}$ which can be rewritten as $\Sigma^{-p-2-j}X_k,$ where $j = -p-2+q+\varepsilon,$ and $k = -p-2+q+\varepsilon+\delta.$ Now, it is clear that $j \geq 0$ and $k \geq j.$ Forgiving the abuse of notation, this leads us to conclude that $(p-\delta,q+\varepsilon) \in \BW((p,q)),$ and therefore the indecomposable corresponding to the arc $(p-\delta,q+\varepsilon)$ is in the region of the Auslander-Reiten quiver which has nonzero maps to $\sE_{-p-2}.$ 

Now, let us again excuse the abuse of notation and consider 
$$H^{+}((p-1,q-1))=\{(a,b) \hspace{1.5mm} | \hspace{1.5mm} a \leq p-2 \hspace{1.5mm} \textrm{and} \hspace{1.5mm} p \leq b \leq q-2\}$$
and
$$H^{-}((p-1,q-1))=\{(a,b) \hspace{1.5mm} | \hspace{1.5mm} p \leq a \leq q-2 \hspace{1.5mm} \textrm{and} \hspace{1.5mm} q \leq b\}.$$ 
Clearly, $(p-\delta,q+\varepsilon) \not\in H^{+}((p-1,q-1))$, since it fails the second condition, and also $(p-\delta,q+\varepsilon) \not\in H^{-}((p-1,q-1))$ since it fails the first condition. Hence, 
$$(p-\delta,q+\varepsilon) \not\in H((p-1,q-1))$$ 
and so there are no nonzero morphisms from the indecomposable corresponding to the arc $(p,q)$ to the indecomposable corresponding to the arc $(p-\delta,q+\varepsilon).$ Similarly, there are no nonzero morphisms in the other direction, either.

The sequence 
$$(p,q), (p-\delta_1,q+\varepsilon_1), (p-\delta_2,q+\varepsilon_2), (p-\delta_3,q+\varepsilon_3), \cdots$$
gives an infinite sequence of indecomposables in $\cT$ all of which have nonzero morphisms to $\sE_{-p-2}.$ Let us group all of these together in the set $T.$ There are no nonzero morphisms from any object in the set $T$ to any other object in the set $T$, except for that same object itself. A right-$\cT$-approximation of $\sE_{-p-2}$ must be of the form $\tau : \widetilde{t_1} \oplus \cdots \oplus \widetilde{t_n} \rightarrow \sE_{-p-2}.$ But for such an object $\widetilde{t_1} \oplus \cdots \oplus \widetilde{t_n}$ to exist, it must have a summand $\widetilde{t_l}$ which allows an infinite number of $T$-indecomposables to factor through it - which is impossible. Hence, the $\Dbar$-object $\sE_{-p-2}$ has no right $\cT$-approximation, and therefore $\cT$ is not cluster tilting.
\end{proof}

\begin{Remark} \label{rem:onlyfountain}
We have shown that if a weakly cluster tilting subcategory $\cT$ of $\Dbar$ contains a hocolimit, then it contains at most one hocolimit. If it contains one hocolimit, (say, $\sE_n$) then $\cT$ is cluster tilting. If $\cT$ contains no hocolimits, then it cannot be cluster tilting. Hence, cluster tilting subcategories of $\Dbar$ have precisely one hocolimit. \demo
\end{Remark}

\noindent Remark \ref{rem:onlyfountain} motivates the following theorem.

\begin{Theorem} \label{thm:cts:maintheorem}
If a weakly cluster tilting subcategory $\cT$ of $\Dbar$ corresponds to a set of arcs $\fT,$ then $\cT$ is cluster tilting if and only if $\fT$ has an arc to infinity and a fountain. \demo
\end{Theorem}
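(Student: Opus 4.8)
The plan is to read the theorem off the structural results already proved, with no new computation. The organising fact is Theorem \ref{pro:maximal}: a weakly cluster tilting subcategory $\cT$ of $\Dbar$, with arc set $\fT$, is of exactly one of two types --- (1) $\fT$ is a maximal, non-crossing, locally finite set of \emph{finite} arcs, or (2) $\fT = \{(m,\infty)\}\cup\fT'$ for an integer $m$, with $\fT'$ a maximal, non-crossing set of finite arcs having a fountain at $m$. I would also recall at the outset that $\cT$ is cluster tilting precisely when it is weakly cluster tilting \emph{and} functorially finite, and that by Proposition \ref{pro:wct} a weakly cluster tilting $\cT$ contains at most one hocolimit, which is what makes this dichotomy exhaustive. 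In particular, for a weakly cluster tilting $\cT$, the conditions ``$\fT$ has an arc to infinity'' and ``$\fT$ has an arc to infinity and a fountain'' are equivalent, since type (2) is the only type admitting an infinite arc and it always carries a fountain at $m$; this is essentially the content of Remark \ref{rem:onlyfountain}, which I would restate in this form before the main argument.

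For the forward implication I would argue by elimination. Assume $\cT$ is cluster tilting. If $\fT$ were of type (1), then Lemma \ref{leapfrog2} would say $\cT$ is \emph{not} cluster tilting, a contradiction; so $\cT$ must be of type (2). But then, directly from the description in Theorem \ref{pro:maximal}, $\fT$ contains the infinite arc $(m,\infty)$ and $\fT'$ has a fountain at $m$, i.e. $\fT$ has both an arc to infinity and a fountain, as required.

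For the converse I would again lean on the dichotomy. Suppose $\fT$ has an arc to infinity and a fountain. Since $\cT$ is weakly cluster tilting, $\fT$ is of type (1) or type (2) by Theorem \ref{pro:maximal}; type (1) consists only of finite arcs, so the presence of an infinite arc forces type (2), say $\fT = \{(m,\infty)\}\cup\fT'$ with $\fT'$ having a fountain at $m$, and the associated hocolimit $\sE_m$ lies in $\cT$. Then the Proposition proved immediately before Lemma \ref{leapfrog2} --- that a weakly cluster tilting subcategory of the form in Theorem \ref{pro:maximal} containing a hocolimit is functorially finite, established via the factorisation Lemmas \ref{lem:fact1} and \ref{lem:fact2} --- shows $\cT$ is functorially finite, hence cluster tilting.

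I do not anticipate a genuine obstacle: the substantive work (the functorial-finiteness argument, and the leapfrogging argument of Lemma \ref{leapfrog2}) is already in place, so the theorem is an assembly of these facts. The only point needing a little care is checking that ``$\cT$ cluster tilting'' really does land us in type (2) rather than in some unlisted possibility --- which is exactly what the exhaustiveness of Theorem \ref{pro:maximal}, together with Proposition \ref{pro:wct}, secures.
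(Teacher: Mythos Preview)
Your proposal is correct and matches the paper's approach: the theorem is stated there without a separate proof, as it is an immediate assembly of Theorem~\ref{pro:maximal}, the Proposition preceding Lemma~\ref{leapfrog2}, and Lemma~\ref{leapfrog2}, exactly as you outline (and as summarised in Remark~\ref{rem:onlyfountain}). One small notational slip: the infinite arc $(m,\infty)$ corresponds to the hocolimit $\sE_{-m-2}$, not $\sE_m$ (cf.\ Proposition~\ref{pro:infinitearc}), though this does not affect your argument.
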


  \bibliographystyle{is-plain}
  \bibliography{../references}  

\section*{Acknowledgements}
This work was supported in part by the EPSRC Centre for Doctoral Training in Data Science, funded by the UK Engineering and Physical Sciences Research Council (grant reference 1244024). The author is very grateful.

\end{document}